\def\H{{\cal H}}
\def\R{\mathbb{R}}
\def\Z{\mathbb{Z}}
\def\C{\mathbb{C}}
\def\H2{H^2(\R^N)}
\def\L2{L^2(\R^N)}
\def\to{\rightarrow}
\def\cd{\!\cdot\!}
\def\H{{\cal H}}
\def\cd{\!\cdot\!}
\def\H1{H^1(\R)}
 \newcommand{\Del}[1]{}
\numberwithin{equation}{section}
\newtheorem{thm}{Theorem}[section]
\newtheorem{cor}[thm]{Corollary}
\newtheorem{lem}[thm]{Lemma}
\newtheorem{prop}[thm]{Proposition}
\theoremstyle{remark}
\newtheorem{remark}[thm]{Remark}
\newtheorem*{exam*}{Examples}
\begin{document}

\setcounter{page}{1}

\title[Large global solutions for NLS, I]{Large global solutions for nonlinear Schr\"odinger equations I, mass-subcritical cases}

\author{Marius Beceanu}
\address{Department of Mathematics and Statistics\\
University at Albany SUNY\\
Earth Science 110\\
Albany, NY, 12222, USA\\}
\email{mbeceanu@albany.edu}
\thanks{}

\author{Qingquan Deng}
\address{Department of Mathematics\\
Hubei Key Laboratory of Mathematical Science\\
Central China Normal University\\
Wuhan 430079, China.\\}
\email{dengq@mail.ccnu.edu.cn}
\thanks{}

\author{Avy Soffer}
\address{Department of Mathematics\\
Rutgers University\\
110 Frelinghuysen Rd.\\
Piscataway, NJ, 08854, USA\\}
%\address{Department of Mathematics\\
%Hubei Key Laboratory of Mathematical Science\\
%Central China Normal University\\
%Wuhan 430079, China.\\}
\email{soffer@math.rutgers.edu}
\thanks{}

\author{Yifei Wu}
\address{Center for Applied Mathematics\\
Tianjin University\\
Tianjin 300072, China}
\email{yerfmath@gmail.com}
\thanks{}

\subjclass[2010]{Primary  35Q55}

%\date{\today}

\keywords{Nonlinear Schr\"{o}dinger equation,
global well-posedness, critical regularity}

\maketitle

\begin{abstract}\noindent
In this paper, we consider the nonlinear Schr\"odinger equation,
$$
    i\partial_{t}u+\Delta u= \mu|u|^p u, \quad (t,x)\in \R^{d+1},
$$
with $\mu=\pm1, p>0$.

In this work, we consider the mass-subcritical cases, that is, $p\in (0,\frac4d)$. We prove that under some restrictions on $d,p$, any radial initial data in the critical space $\dot H^{s_c}(\R^d)$ with compact support, implies  global well-posedness.
\end{abstract}

\tableofcontents

\section{Introduction}
We study the Cauchy problem for the following nonlinear
Schr\"{o}dinger equation (NLS) on $\R\times\R^d$:
 \begin{equation}\label{eqs:NLS-cubic}
   \left\{ \aligned
    &i\partial_{t}u+\Delta u=\mu |u|^p u,
    \\
    &u(0,x)  =u_0(x),
   \endaligned
  \right.
 \end{equation}
with $\mu=\pm1, p>0$.
Here $u(t,x):\R\times\R^d\rightarrow \C$ is a complex-valued function. The case $\mu=1$ is referred to the defocusing case, and the case $\mu=-1$ is the focusing case. The class of solutions to equation (\ref{eqs:NLS-cubic}) is invariant under the scaling
\begin{equation}\label{eqs:scaling-p}
u(t,x)\to u_\lambda(t,x) = \lambda^{\frac2p} u(\lambda^2 t, \lambda x) \ \ {\rm for}\ \ \lambda>0,
\end{equation}
which maps the initial data as
\begin{eqnarray}
u(0)\to u_{\lambda}(0):=\lambda^{\frac2p} u_{0}(\lambda x) \ \ {\rm for}\ \ \lambda>0.\nonumber
\end{eqnarray}
Denote
$$
s_c=\frac d2-\frac2p.
$$
Then the scaling  leaves  $\dot{H}^{s_{c}}$ norm invariant, that is,
\begin{eqnarray*}
\|u\|_{\dot H^{s_{c}}}=\|u_{\lambda}\|_{\dot H^{s_{c}}},
\end{eqnarray*}
which is called \emph{critical regularity} $s_{c}$. It is also considered as the lowest regularity for which the problem  (\ref{eqs:NLS-cubic}) is well-posed for general $H^{s}(\R^d)$-data. Indeed,  it was proved by Christ, Colliander, Tao \cite{ChCoTa-Ill} that  there exist some initial datum belonging to $H^s(\R^d), s<s_c$ such that the problem   (\ref{eqs:NLS-cubic}) is ill-posed.

The $H^1$-solution of equation \eqref{eqs:NLS-cubic} also enjoys  mass, momentum and energy
conservation laws, which read
\begin{equation}\label{eqs:energy-mass}
   \aligned
M(u(t))&:=\int |u(t,x)|^2\,dx=M(u_0),\\
P(u(t))&:=\textrm{Im}\int \overline{u(t,x)}\nabla u(t,x)\,dx=P(u_0),\\
E(u(t)) &:= \int |\nabla u(t,x)|^2\,dx + \frac{2\mu}{p+2}\int
|u(t,x)|^{p+2} \,dx = E(u_0).
   \endaligned
\end{equation}

The well-posedness and scattering theory for Cauchy problem (\ref{eqs:NLS-cubic}) with initial data in $H^{s}(\R^d)$ were extensively studied, which we here  briefly review.
The local well-posedness theory follows from a standard fixed point argument, implying that  for all $u_{0}\in H^{s}(\R^d)$, there exists $T_{0}>0$ such that its corresponding solution $u\in C([0,T_{0}),\ H^{s}(\R^d))$. In fact, the above $T_{0}$ depends on $\|u_{0}\|_{H^{s}(\R^d)}$ when $s>s_c$ and also the profile of $u_{0}$  when $s=s_c$. Some of the results can be found in Cazenave and Weissler \cite{CW1}.

Such argument can be applied directly to prove the global well-posedness for solutions to equation (\ref{eqs:NLS-cubic})  with small initial data in $H^{s}(\R^d)$ with $s\geq s_c$.
In the mass-subcritical cases, that is, $p<\frac4d$, if we consider the solution in  $L^2(\R^d)$ space, the local theory above, together with the mass  conservation laws  (\ref{eqs:energy-mass}), yields the global well-posedness for any initial data $u_0\in L^2(\R^d)$. In the mass-supercritical, energy-subcritical cases, that is, $\frac4d<p<\frac4{d-2}$, if we consider the solution in energy space $H^1(\R^d)$, the local theory above together with  conservation laws  (\ref{eqs:energy-mass}) yields the global well-posedness for all initial data $u_0\in H^1(\R^d)$ in the defocusing case $\mu=1$, and for any initial data $u_0\in H^1(\R^d)$ with some restrictions in the focusing case.
%\begin{align*}
%M(v_0)^{1-s_c}E(v_0)^{s_c}<M(Q)^{1-s_c}E(Q)^{s_c},\quad \|v_0\|_{L^2(\R^d)}^{1-s_c}\|\nabla v_0\|_{L^2(\R^d)}^{s_c}<\|Q\|_{L^2(\R^d)}^{1-s_c}\|\nabla Q\|_{L^2(\R^d)}^{s_c}
%\end{align*}
%Here $Q$ is the ground state of the following elliptic equation,
%$$
%-\Delta Q+Q=Q^3.
%$$
Furthermore, the scattering under the same conditions were also obtained by Ginibre, Velo \cite{GiVe} in the defocusing case and \cite{DuHoRo} in the focusing case.
In the mass-critical and energy-critical cases, since the conservation laws do not imply directly the global existence of the solutions, the problem becomes much more complicated.
In the energy-critical case, the global well-posedenss and scattering in the defocusing case was first proved by Bourgain \cite{Bou2} in the radial data case and then by Colliander, Keel, Takaoka, Staffilani and Tao \cite{CKSTT08} in the non-radial data case in dimension three, the higher dimension cases were solved by Ryckman and Visan \cite{RV07AJM} and Visan \cite{Vi-1,Vi-2}; the global well-posedenss and scattering in the focusing case was proved by Kenig and Merle \cite{KeMe--NLS-2006} in the radial data case,  then by Killip, Visan \cite{KV-10-3} in the non-radial case when the dimensions are five and higher, and by Dodson \cite{Dodson-14} in four dimensions, see also \cite{{KV-10-4,Tao-1, TaViZh, Vi-1, Vi-2}} for some previous works and simplified proofs. In the mass-critical case, the global well-posedenss and scattering was first proved by Killip, Tao, Visan \cite{KiTaVi-2009} in the radial data case in dimension two, and Killip, Visan, Zhang \cite{KVZ-08} in dimensions higher than two, then in the non-radial data case, the problem was solved in a series of papers of Dodson \cite{Dodson-12, Dodson-16-1, Dodson-16-2, Dodson-15}.

More complicated situation appears if one considers the general nonlinear Schr\"odinger equations in the critical space $\dot H^{s_c}(\R^d)$.
Recently, conditional global and scattering results with the assumption of $u\in L^\infty_t(I,\dot H^{s_c}_x(\R^d))$ (here $I$ is the maximal lifespan) were considered by many authors, which was started from \cite{KeMe-cubic-NLS-2010, KeMe-wave-2011-1}, and then developed by
\cite{Bu, DoMiMuZh-17,DKM, DuRo, KeMe-wave-2011-2, KiMaMuVi-NoDEA-2017, KiMaMuVi-2018, KV, KV-10, KV-10-3, MJJ, MWZ, Mu, Mu-3, XieFa-13} and cited references.
%See e.g. \cite{KeMe-cubic-NLS-2010, DoMiMuZh-17, KiMaMuVi-2018, KV, KV-10-2, MJJ, Mu, Mu-2, Mu-3, XieFa-13} and cited references for nonlinear Schr\"odinger equations, see also \cite{}
 That is,  if the initial data $u_{0}\in \dot H^{s_c}(\R^d)$ and the solution has priori estimate
\begin{align}
\sup_{0<t<T_{out}(u_{0})}\|u\|_{\dot H^{s_c}_x(\R^d)}<+\infty, \label{uniformbound}
\end{align}
then $T_{out}(u_{0})=+\infty$ and the solution scatters in $\dot H^{s_c}(\R^d)$, here $[0,T_{out}(u_{0}))$ is the maximal interval in positive direction for existence of the solution. Consequently, these results give the blowup criterion which the lifetime  depends only on the critical norm $\|u\|_{L^\infty_t\dot H^{s_c}_x(I\times\R^d)}$.  However, it seems that no such large data global
results are known, if only the initial data $u_{0}\in \dot H^{s_c}(\R^d)$.
Furthermore,  many authors considered the large global solutions for rough data from a probabilistic point of view,
that is, one may construct a large sets of initial data of super-critical regularity which leads to global solutions,
see \cite{Bou-1994, Bou-1997, BuTz, BuTz-2, BuTz-3, CoOh-2012,Deng-2012, DeTz-2015, Dodson-17,KiMuVi-2017, LuMe-2014, NaOhReSt-2012, NaOhReSt-2015, OhOkPo-2017, OhPo-2016, Po-2017, PoRoTh-2014, Th-09}.
% To our knowledge, it seems such results on the energy-supercritical results on $\R^d$.
% in $\dot {W}^{p,s}$ for some $p$ and $s$, one can easily check the range for $s$ include the critical index $\frac12$, \

%
%The conditional results are based in large part on subtle profile decomposition of the solution, compensated compactness and the localized Strichartz estimates. In this work we introduce an approach based on phase space decomposition, for the Schr\"odinger dynamics. That is, we decompose the solution (and the initial data) to outgoing and incoming parts (see for example Section \ref{sec:out/in}). We also decompose in frequency space, and in configuration space. Such decompositions were introduced before, for the wave equation; for the Sch\"odinger equation, the construction is more involved.

%Now we state our main theorem.

In the first part of our series of works, we consider the global solution for the mass-subcritical nonlinear Sch\"odinger equation in the critical space $\dot H^{s_c}(\R^d)$. Due to the mass conservation law, $L^2$-initial datum lead to the global solutions. It is known from Christ, Colliander and Tao \cite{ChCoTa-Ill} and Kenig, Ponce, Vega \cite{KePoVe-Duke-2001} that the problem is ill-posed in some sense for the non-radial datum in $\dot H^{s}(\R^d), s<0$. However, for the radial data, due to the better radial Strichartz estimates, one may establish the  local well-posedness result in negative regularity Sobolev spaces.
Indeed, it was proved by Guo and Wang \cite{GW} that there exists $p_0(d)<\frac4d$, such that for any $p\in (p_0(d),\frac4d)$, if the initial datum are radial and small in the critical space $\dot H^{s_c}(\R^d)$, then the nonlinear solutions of \eqref{eqs:NLS-cubic} are global and scatter.
Very recently, Killip, Masaki, Murphy and Visan \cite{KiMaMuVi-NoDEA-2017,KiMaMuVi-2018} proved a conditional result; that in the defocusing case, there exists $p_0(d)<\frac4d$, such that for any $p\in (p_0(d),\frac4d)$, if the radial solution $u\in L^\infty_t\dot H^{s_c}_x(I\times\R^d)$, then $I=\R$ and the solution scatters, by using concentration-compactness arguments.
This is the first global result for large data theory in the critical spaces for the mass-subcritical NLS.

In this paper, we prove unconditional global well-posedness. We prove that  for radial initial data with compact support in space, and is in the critical space, there exists solution global  in time.
\begin{thm}\label{thm:main01}
Let $d\ge 4$, and $\mu=\pm1$. Then there exists $p_0(d)\in (0,\frac4d)$, such that for any $p\in [p_0(d),\frac4d)$, the following is true. Suppose that  $u_0\in \dot H^{s_c}(\R^d)$ is a radial function satisfying
$$
\mbox{supp }u_0\subset \{x:|x|\le 1\}.
$$
Then the solution $u$ to the equation \eqref{eqs:NLS-cubic} with the initial data $u_0$
exists globally in time, and $u\in C(\R^+;\dot H^{s_c}(\R^d))\cap L^\infty(\R^+;\dot H^{s_c}(\R^d)+L^2(\R^d))$. Moreover, for any $t\in \R$,
$$
\big\| u(t)\big\|_{\dot H^{s_c}(\R^d)}
\lesssim 1+|t|.
$$
%Moreover, if $\mu=1$, then there exists $u_{0\pm}\in H^{s_c}(\R^d)$, such that when $t\to \pm\infty$,
%\begin{align}
%\lim\limits_{t\to \pm\infty}\|u(t)-e^{it\Delta}u_{0\pm}\|_{\dot H^{s_c}(\R^d)}= 0 . \label{scattering01}
%\end{align}
\end{thm}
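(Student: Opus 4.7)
\emph{Proof plan.} The approach is a high-low frequency decomposition of the initial data: I would split $u_0$ into an $L^2$ ``bulk'' (handled globally by mass conservation) and a critical-norm-small ``remainder'' (handled perturbatively), then construct the full solution $u$ as their sum for all time. Concretely, choose $N \gg 1$ and write $u_0 = v_0 + w_0$ via Littlewood-Paley, $v_0 := P_{\le N} u_0$, $w_0 := P_{> N} u_0$. Bernstein's inequality gives $\|v_0\|_{L^2(\R^d)} \lesssim N^{-s_c}\|u_0\|_{\dot H^{s_c}}$ (note $-s_c > 0$), while $u_0 \in \dot H^{s_c}$ forces $\|w_0\|_{\dot H^{s_c}} \to 0$ as $N \to \infty$; the compact support of $u_0$ (entireness of $\hat u_0$ by Paley-Wiener) provides a quantitative rate. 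Take $N$ large enough that $\|w_0\|_{\dot H^{s_c}}$ falls below any threshold demanded by the perturbation analysis below.

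For the bulk, the $L^2$-subcritical local theory of Cazenave-Weissler~\cite{CW1} together with mass conservation produces a global solution $v \in C(\R; L^2(\R^d))$ of~\eqref{eqs:NLS-cubic} with data $v_0$, enjoying global-in-time Strichartz bounds depending only on $\|v_0\|_{L^2}$. Setting $w := u - v$, the remainder solves the perturbed equation
\begin{equation*}
    i\partial_t w + \Delta w = \mu\bigl(|v+w|^p(v+w) - |v|^p v\bigr), \quad w(0) = w_0,
\end{equation*}
whose nonlinearity schematically expands as $O(|v|^p w) + O(|v|^{p-1}|w|^2) + \cdots + O(|w|^p w)$, i.e., a linear Schr\"odinger equation perturbed by an effective potential of size $|v|^p$ plus $w$-self-interaction terms. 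Using the improved radial Strichartz estimates of Guo-Wang~\cite{GW}, which reach the critical regularity $s_c < 0$ precisely in the range $p \in [p_0(d), 4/d)$, one would run a contraction in a Strichartz norm adapted to $\dot H^{s_c}$, treating $v$ as a known forcing. The smallness of $\|w_0\|_{\dot H^{s_c}}$ together with the global Strichartz bounds on $v$ should close the fixed point globally in time, yielding $w \in C(\R^+; \dot H^{s_c}(\R^d))$ with uniformly small critical norm.

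The growth bound $\|u(t)\|_{\dot H^{s_c}} \lesssim 1 + |t|$ emerges from Duhamel applied to $v$:
\begin{equation*}
    \|v(t)\|_{\dot H^{s_c}} \le \|v_0\|_{\dot H^{s_c}} + \int_0^t \bigl\||v|^p v(s)\bigr\|_{\dot H^{s_c}}\,ds,
\end{equation*}
where a uniform-in-$s$ bound on $\||v|^p v(s)\|_{\dot H^{s_c}}$ follows from the global Strichartz norms of $v$ together with fractional product estimates at negative regularity. Combined with the uniform critical-norm bound on $w$, this transfers to $u$ and delivers the stated linear growth.

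The principal obstacle is closing the perturbation step \emph{globally} in time in a way compatible with this linear (not exponential) growth. The large background $v$ has only $L^2$-scaled Strichartz bounds, whereas the $w$-equation lives at the critical scaling $\dot H^{s_c}$; this scaling mismatch in the cross-term $|v|^p w$ is precisely where a naive Gronwall argument would produce an $e^{CT}$ factor and fail. The radial hypothesis (giving additional Strichartz pairs below the standard $L^2$-admissibility line) and the proximity $p \approx 4/d$ (making $|s_c|$ small) should provide exactly the flexibility needed to distribute regularity between $v$ and $w$ in space-time norms so that the interaction is globally absorbable. This careful tracking of the regularity budget between the $L^2$-bulk and the critical-small remainder is, I expect, the technical core of the argument.
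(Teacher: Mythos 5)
Your decomposition inverts the paper's: you put the $L^2$-bulk into $v$ (low frequencies) and the critically small remainder into $w$ (high frequencies), and then propose to control $w$ perturbatively. This cannot close, for two reasons.

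First, your claim that the mass-subcritical $L^2$-solution $v$ enjoys ``global-in-time Strichartz bounds depending only on $\|v_0\|_{L^2}$'' is false. Mass conservation gives uniform control of $\|v(t)\|_{L^2}$ and hence Strichartz bounds on intervals of length $\sim 1$, but not over all of $\R$; mass-subcritical NLS does not in general scatter in $L^2$, and $\|v\|_{L^q_t L^r_x(\R\times\R^d)}$ is generically infinite. As a result the effective potential $|v|^p$ in your $w$-equation has no global space-time smallness or decay, and iterating the unit-interval estimates produces exactly the $e^{CT}$ factor you flagged. You identify this as ``the technical core'' but offer no mechanism to resolve it; the improved radial Strichartz estimates of Guo--Wang bring down the admissible regularity but do not create time decay for the large background. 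Relatedly, your uniform-in-$s$ bound on $\||v|^p v(s)\|_{\dot H^{s_c}}$ would again require global space-time control of $v$ that is not available, so the Duhamel argument for the linear growth bound does not go through either.

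Second, you do not use the compact-support hypothesis where it matters. You invoke Paley--Wiener only to make the high-frequency tail small in $\dot H^{s_c}$, but that smallness already follows from $u_0\in\dot H^{s_c}$ and dominated convergence. In the paper the compact support is essential for a different reason: the critically small piece is taken to be $v_0=\chi_{\le 10}(P_{\ge N}u_0)$ (high frequency \emph{and} spatially localized), and Proposition~\ref{prop:lineares-cpt-largetime-I} shows that for such data the \emph{linear} flow gains extra decay and derivatives for $t\gtrsim 1$. To make the nonlinear $v$ inherit these gains, the paper evolves $v$ by the \emph{time-truncated} equation $i\partial_t v+\Delta v=\chi_{\le 1}(t)|v|^p v$, so that $v$ is free linear flow for $t>2$ (Proposition~\ref{prop:v-X-ab}, Corollary~\ref{cor:smooth-t+1}). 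The $L^2$-bulk is then $w=u-v$, controlled not by a contraction in $\dot H^{s_c}$ but by an almost-mass-conservation estimate (Proposition~\ref{prop:L2-w}) that exploits both the cancellation $\mathrm{Im}\int |u|^p|w|^2\,dx=0$ and the time decay of $v$; neither ingredient appears in your outline. The global existence then follows from the $L^2$-bound on $w$, and the growth $\|u(t)\|_{\dot H^{s_c}}\lesssim 1+|t|$ is obtained inductively over intervals of length $\sim N^{-2}$ rather than from a direct Duhamel estimate on $v$.
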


\begin{remark}
We make several remarks regarding the above statements.

(1) Our conclusions are valid for both of the focusing and the defocusing cases.
Further, by scaling, one can extend the size of the radius 1 to an arbitrary large number.
Moreover, the compact support assumption on initial data are not necessary and can be replaced by
some weighted assumption.

(2) In the present paper, we are not going to give the sharp conditions on $p_0(d)$ and $d$.

In the mass-subcritical cases,   there is a new difficulty when we consider the global solution in the negative Sobolev space. It is worth noting that in this case, we can not use the mass, energy conservation laws, and Morawetz estimates. Moreover, the pseudo-conformal conservation law has no good sign.

Further, because all of the conservation laws are beyond the critical scaling regularity, we believe that analogous scattering result in $\dot H^{s_c}(\R^d)$ is very hard to pursue in the mass-subcritical case (it is similar to the energy-supercritical case in which all the conservation laws are below the critical scaling regularity), even if the initial data is smooth enough.
\end{remark}

\textbf{Sketch of the proof:}

First, in step 1, we show an improved (supercritical) Strichartz estimates for the initial data localized in space under the linear flow. More precisely, we prove that
for all $N\ge 1$, there exist $\alpha_0>1,\beta_0>0$, such that
\begin{align*}
\Big\|\langle t^{\alpha_0}|\nabla|\rangle^{\beta_0}|\nabla|^{s_c}\big(e^{it\Delta}\chi_{\le 10}(P_{\ge N}g)\big)\Big\|_{L^2_tL^\frac{2d}{d-2}_x(\R\times\R^d)}
\lesssim \|P_{\ge N}g\|_{\dot H^{s_c}(\R^d)}
\end{align*}
(a slight stronger estimate is needed, see Section \ref{Sec:thm01} below). From this estimate, we gain the regularity and time decay for $t\gtrsim 1$.

In step 2, given small constant $\delta_0>0$, we break the initial data into two parts, $u_0=v_0+w_0$, with
$$
v_0=\chi_{\le 10}(P_{\ge N}u_0) \mbox{ with }\|v_0\|_{\dot H^{s_c}(\R^d)}\le \delta_0,\quad \mbox{and }\quad w_0\in L^2(\R^d).
$$
Now, let $v$ be the solution of the following \emph{time cut-off} equation,
 \begin{equation*}
   \left\{ \aligned
    &i\partial_{t}v+\Delta v= \chi_{\le 1}(t)|v|^p v,
    \\
    &v(0,x)  =v_0(x).
   \endaligned
  \right.
 \end{equation*}
In this step, we prove that the analogous estimates in Step 1 hold true for the nonlinear solution $v$. That is,
\begin{align*}
\Big\|\langle t^{\alpha_0}|\nabla|\rangle^{\beta_0}|\nabla|^{s_c}P_{M}v\Big\|_{l^\infty_M L^2_t L^{\frac{2d}{d-2}}_x(\{M\ge 1\}\times\R\times\R^d)}
\lesssim \|v_0\|_{\dot H^{s_c}(\R^d)},
\end{align*}
which we use later with $t\gtrsim 1$.

In step 3, we prove the uniform in time boundedness of $\|w(t)\|_{L^2_x(\R^d)}$.   Note that $w$ obeys the equation of
\begin{equation*}
    i\partial_{t}w+\Delta w= |u|^pu-\chi_{\le 1}(t)|v|^p v.
 \end{equation*}
 We find that the nonlinearity obeys
 \begin{align*}
\big||u|^pu-\chi_{\le 1}(t)|v|^pv\big|
\lesssim \big(|u|^p+|\chi_{\lesssim 1}(t)v|^p\big)\big(|w|+|\chi_{\gtrsim 1}(t)v|\big).
\end{align*}
Due to the good estimates on $\chi_{\gtrsim 1}(t)v$ obtained in Step 2, we can prove the desired estimate by the almost mass conservation of $w$.

\section{Preliminary}

\subsection{Notation}

We write $X \lesssim Y$ or $Y \gtrsim X$ to indicate $X \leq CY$ for some constant $C>0$. If $C$ depends upon some additional
parameters, we will indicate this with subscripts; for example, $X\lesssim_a Y$ denotes the
assertion that $X\le C(a)Y$ for some $C(a)$ depending on $a$. We use $O(Y)$ to denote any quantity $X$
such that $|X| \lesssim Y$.  We use the notation $X \sim Y$ whenever $X \lesssim Y \lesssim X$. Moreover, we use the notation $X\ll  Y$ to  indicate $X \leq C^{-1}Y$.  

The notation %$a+$ denotes $a+\epsilon$ for
%any small $\epsilon$, and $a-$ for $a-\epsilon$.
$|\nabla|^\alpha=(-\partial^2_x)^{\alpha/2}$.
We denote $\mathcal S(\R^d)$ to be the Schwartz Space in $\R^d$, and $\mathcal S'(\R^d)$ to be the topological dual of $\mathcal S(\R^d)$.  Let $h\in \mathcal S'(\R^{d+1})$, we use
$\|h\|_{L^q_tL^p_x}$ to denote the mixed norm
$\Big(\displaystyle\int\|h(\cdot,t)\|_{L^p}^q\
dt\Big)^{\frac{1}{q}}$, and $\|h\|_{L^q_{xt}}:=\|h\|_{L^q_xL^q_t}$. Sometimes, we use the notation $q'=\frac{q}{q-1}$.

Throughout this paper, we use $\chi_{\le a}$ for $a\in \R^+$ to be the smooth function
\begin{align*}
\chi_{\le a}(x)=\left\{ \aligned
1, \ & |x|\le a,\\
0,    \ &|x|\ge \frac{11}{10} a.
\endaligned
  \right.
\end{align*}
Moreover, we denote $\chi_{\ge a}=1-\chi_{\le a}$ and $\chi_{a\le \cdot\le b}=\chi_{\le b}-\chi_{\le a}$. We denote $\chi_{a}=\chi_{\le 2a}-\chi_{\le a}$ for short.
%
%\emph{Sometimes, we abuse the notations and regard $\chi_{\le a}(x)$ and $1-\chi_{\ge a}(x)$ as the same.
%For example,  we write $\chi_{\le 1}(x)+\chi_{\ge 1}(x)=1$ for simplicity, and sometimes we regards $\chi_{\le 1}(x), 1-\chi_{\ge a}(x)$ and $\chi_{\le 1}'(x)$ as
%the same if there is no confusion (in this case, we will notice to the readers in the related contents).}

%For any interval $\Omega\subset \R$, we denote $\mathbb I_\Omega$ as its characteristic function
%\begin{align*}
%\mathbb I_\Omega(x)=\left\{ \aligned
%1, \ & x\in \Omega,\\
%0,    \ &x\not\in \Omega.
%\endaligned
%  \right.
%\end{align*}

Also, we need some Fourier operators. First, we recall the Fourier transform and its inverse formula.
We denote the Fourier transform  by $\hat f$ or $\mathscr{F} f$ as
\begin{align*}
\big( \,\mathscr F f(\xi)\,\,\mbox{or}\,\,\big)\hat f(\xi) &=\int_{\R^d} e^{-2\pi i x\cdot \xi}f(x)\,dx,
\end{align*}
and its inverse transform  by $\check f$ or $\mathscr{F}^{-1} f$ as
\begin{align*}
\big( \,\mathscr F^{-1} f(x)\,\,\mbox{or}\,\,\big)\check f(x) &=\int_{\R^d} e^{2\pi i x\cdot \xi}f(\xi)\,d\xi.
\end{align*}

For each number $N > 0$, we define the Fourier multipliers $P_{\le N}, P_{> N}, P_N$ as
\begin{align*}
\widehat{P_{\leq N} f}(\xi) &:= \chi_{\leq N}(\xi) \hat f(\xi),\\
\widehat{P_{> N} f}(\xi) &:= \chi_{> N}(\xi) \hat f(\xi),\\
\widehat{P_N f}(\xi) &:= \chi_{N}(\xi) \hat
f(\xi),
\end{align*}
and similarly $P_{<N}$ and $P_{\geq N}$.  %We also define
%$$
%\tilde P_N := P_{N/2} + P_N +P_{2N}.
%$$
 We will usually use these multipliers when $N$ are \emph{dyadic numbers} (that is, of the form $2^k$
for some integer $k$).

Moreover,  we also need the notations
$$
\nabla_{\xi}=\{\partial_{\xi_1},\cdots,\partial_{\xi_d}\};\quad
\partial_\xi^{l}=\partial_{\xi_1}^{l^1}\cdots\partial_{\xi_d}^{l^d}, \mbox{ for any } l=\{l^1,\cdots,l^d\}\in \R^{d}.
$$

\subsection{Some basic lemmas}
First, we need the following radial Sobolev embedding, see \cite{TaViZh} for example.
\begin{lem}\label{lem:radial-Sob}
Let $\alpha,q,p,s$ be the parameters which satisfy
$$
\alpha>-\frac dq;\quad \frac1q\le \frac1p\le \frac1q+s;\quad 1\le p,q\le \infty; \quad 0<s<d
$$
with
$$
\alpha+s=d(\frac1p-\frac1q).
$$
Moreover, at most one of the equalities hold:
$$
p=1,\quad p=\infty,\quad q=1,\quad q=\infty,\quad \frac1p=\frac1q+s.
$$
Then
\begin{align*}
\big\||x|^\alpha u\big\|_{L^q(\R^d)}\lesssim \big\||\nabla|^su\big\|_{L^p(\R^d)}.
\end{align*}
\end{lem}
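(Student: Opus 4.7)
The plan is to reduce the inequality to an estimate on the Riesz potential $I_s := c_{d,s}(-\Delta)^{-s/2}$ and then exploit the radial hypothesis on $u$ via angular averaging of the Riesz kernel. Setting $f := |\nabla|^s u$, which is radial iff $u$ is, the claim becomes
\[
 \bigl\||x|^\alpha I_s f\bigr\|_{L^q(\R^d)} \lesssim \|f\|_{L^p(\R^d)}
\]
for radial $f$, and the relation $\alpha+s=d(1/p-1/q)$ is exactly the homogeneity balance under $f(x)\mapsto f(\lambda x)$, so it is necessary.

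For each $x\in\R^d$ I would split
\[
 I_s f(x)=\Bigl(\int_{|y|<|x|/2}+\int_{|y|>2|x|}+\int_{|x|/2\le|y|\le 2|x|}\Bigr)|x-y|^{s-d}f(y)\,dy=:T_1(x)+T_2(x)+T_3(x).
\]
On the outer regions $|x-y|\sim\max(|x|,|y|)$, giving the pointwise bounds $|T_1(x)|\lesssim |x|^{s-d}\int_{|y|<|x|/2}|f|$ and $|T_2(x)|\lesssim\int_{|y|>2|x|}|y|^{s-d}|f|$. Using the radiality of $f$ to pass to polar coordinates, the $L^q(|x|^\alpha\,dx)$ estimate on each of $T_1$, $T_2$ reduces to a one-dimensional weighted inequality for the radial profile $\tilde f(r)$ on $(0,\infty)$; each such inequality is scale-invariant precisely by the relation $\alpha+s=d(1/p-1/q)$ and is handled by the classical weighted Hardy inequality on the half-line. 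The assumption $\alpha>-d/q$ guarantees local integrability of $|x|^{\alpha q}$ at the origin, and the excluded equality cases are exactly the classical failure points of Hardy.

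The delicate part is the middle annular piece $T_3$, where $|x|\sim|y|$ yet $|x-y|$ can vanish. Here I would use the radiality of $f$ to collapse angles and obtain
\[
 T_3(x)=\int_{|x|/2}^{2|x|}\tilde f(r)\,r^{d-1}\,\Phi(|x|,r)\,dr,\quad \Phi(R,r):=\int_{S^{d-1}}|Re_1-r\omega|^{s-d}\,d\sigma(\omega),
\]
and estimate the spherically averaged kernel $\Phi(R,r)$ by direct computation in the regime $r\sim R$. Inserting the resulting kernel bound back into a one-dimensional weighted Hardy inequality (or, equivalently, applying a Stein--Weiss-type estimate on a fixed annulus $r\sim R$) closes the argument. \textbf{The main obstacle} is precisely this analysis of $\Phi$ across the singular diagonal $r=R$: it is here that the radial improvement over the classical Stein--Weiss range is genuinely produced, in particular covering the regime $\alpha>0$ that would be inaccessible without the radial assumption. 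The remainder of the proof is routine bookkeeping combining Fubini, the scaling relation, and the one-dimensional weighted Hardy inequality, with the listed endpoint restrictions on $p,q,s$ matching the standard endpoint conditions for Hardy, Stein--Weiss, and Hardy--Littlewood--Sobolev.
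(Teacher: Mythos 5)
The paper itself gives no proof of this lemma --- it is stated as a known radial Sobolev (Stein--Weiss type) embedding with a pointer to the reference [TaViZh] --- so there is no in-paper argument to compare against line by line. Your proposal is, however, essentially the standard proof of such radial weighted estimates from the literature, and its skeleton is sound: writing $u=I_sf$ with $f=|\nabla|^su$, splitting the Riesz potential into the off-diagonal pieces $T_1,T_2$ (where $|x-y|\sim\max(|x|,|y|)$ and one-dimensional weighted Hardy inequalities apply, with $\alpha>-\frac dq$ entering exactly as the integrability of $R^{\alpha q+d-1}$ near $R=0$ for the outer piece) and the diagonal piece $T_3$, where the angular average $\Phi(R,r)$ of the kernel is what produces the radial gain and permits $\alpha>0$. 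Two places in the sketch would need genuine fleshing out. First, the behavior of $\Phi(R,r)$ for $r\sim R$ is not uniform in $s$: one gets $\Phi(R,r)\sim R^{1-d}|R-r|^{s-1}$ only for $0<s<1$, a logarithm at $s=1$, and $\Phi(R,r)\lesssim R^{s-d}$ for $s>1$, so the treatment of $T_3$ splits into cases. Second, for $s<1$ the resulting one-dimensional operator on the annulus is fractional integration of order $s$, so the closing estimate there is a one-dimensional Hardy--Littlewood--Sobolev/Young bound (whose endpoint is precisely the excluded equality $\frac1p=\frac1q+s$) followed by a dyadic summation over annuli using $\ell^p\hookrightarrow\ell^q$ for $p\le q$, rather than a Hardy inequality per se. With those two points supplied, the argument closes and matches the hypotheses of the lemma, so I would regard the proposal as a correct route with routine details omitted rather than as containing a gap.
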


The second is the following fractional Leibniz rule, see \cite{KePoVe-CPAM-1993, BoLi-KatoPonce, Li-KatoPonce} and the references therein.
\begin{lem}\label{lem:Frac_Leibniz}
Let $0<s<1$, $\frac12<p\le \infty$, and $1<p_1,p_2,p_3, p_4 \le \infty$ with $\frac1p=\frac1{p_1}+\frac1{p_2}$, $\frac1p=\frac1{p_3}+\frac1{p_4}$, and let $f,g\in \mathcal S(\R^d)$,  then
\begin{align*}
\big\||\nabla|^s(fg)\big\|_{L^p}\lesssim \big\||\nabla|^sf\big\|_{L^{p_1}}\|g\|_{L^{p_2}}+ \big\||\nabla|^sg\big\|_{L^{p_3}}\|f\|_{L^{p_4}}.
\end{align*}
\end{lem}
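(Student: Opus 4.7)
My strategy is to follow the three-step program sketched in the introduction. First I would fix a small threshold $\delta_0 > 0$ and choose $N = N(u_0,\delta_0) \gg 1$ large enough that $\|P_{\ge N} u_0\|_{\dot H^{s_c}} \le \delta_0$, which is possible because $\|P_{\ge N} u_0\|_{\dot H^{s_c}} \to 0$ as $N\to\infty$. Writing
\[
u_0 = v_0 + w_0, \qquad v_0 := \chi_{\le 10}(P_{\ge N} u_0),
\]
the hypothesis $\mathrm{supp}\,u_0 \subset \{|x|\le 1\}$ together with $\chi_{\le 10}\equiv 1$ on this ball yields $w_0 = \chi_{\le 10}(P_{<N} u_0)$. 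Bernstein then gives $\|w_0\|_{L^2} \lesssim N^{-s_c}\|u_0\|_{\dot H^{s_c}}$, while $\|v_0\|_{\dot H^{s_c}} \lesssim \delta_0$ (after handling the commutator $[\chi_{\le 10},P_{\ge N}]$, which is harmless for $N$ large since its kernel is concentrated at scale $N^{-1}\ll 1$). Thus $u_0$ has been split into a small-$\dot H^{s_c}$ piece and a possibly large $L^2$ piece.

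Next I would establish the weighted linear Strichartz estimate of Step~1. The heuristic is that a frequency-$M$ wave packet initially localized in $\{|x|\le 11\}$ propagates with group velocity $\sim M$, so for $t\gtrsim 1$ its bulk has radiated to $|x|\gtrsim tM$; this should convert the spatial cutoff into a gain of the form $\langle tM\rangle^{-\beta_0}$, which is precisely what the weight $\langle t^{\alpha_0}|\nabla|\rangle^{\beta_0}$ encodes. I would prove this by Littlewood-Paley decomposition, $TT^\ast$, and repeated integration by parts (stationary phase) on the kernel of $\chi_{\le 10}\,e^{it\Delta}P_M$, combined with the radial Strichartz estimates, which are essential here because the standard (non-radial) ones do not give the scaling room we need in the negative-regularity regime. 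The nonlinear version for the cutoff equation
\[
i\partial_t v + \Delta v = \chi_{\le 1}(t)|v|^p v,\qquad v(0)=v_0,
\]
would be obtained in two stages: a small-data contraction in the critical Strichartz norm on the short interval where the cutoff is active (harmless because $\|v_0\|_{\dot H^{s_c}}\le \delta_0$), followed by a Duhamel bootstrap that transfers the weighted estimate of Step~1 to $v$, using Lemma~\ref{lem:Frac_Leibniz} to split derivatives off $|v|^p v$. For $|t|\ge 11/10$ the equation is linear, so the Step~1 bound applies cleanly.

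For Step~3 I would set $w := u - v$ and use the pointwise factorization
\[
\big||u|^p u - \chi_{\le 1}(t)|v|^p v\big| \lesssim \big(|u|^p + |\chi_{\lesssim 1}(t)v|^p\big)\big(|w| + |\chi_{\gtrsim 1}(t)v|\big).
\]
Pairing the equation for $w$ with $\bar w$ and taking imaginary parts yields a differential inequality for $\|w(t)\|_{L^2}^2$ whose right-hand side either contains $|w|^2$ (a Gronwall term) or the factor $\chi_{\gtrsim 1}(t)v$, on which Step~2 gives a time-integrable weighted bound after using radial Sobolev (Lemma~\ref{lem:radial-Sob}) and Lemma~\ref{lem:Frac_Leibniz}. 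I expect the \emph{main obstacle} to lie precisely here: closing this mass estimate demands a tight trade-off between the weighted time decay available on $v$ (which requires $p$ close to $\tfrac{4}{d}$, so $s_c$ is only mildly negative) and the integrability of $|u|^p$ in space-time (which benefits from $d$ being large so radial Sobolev is strong). These competing requirements are what pin down the threshold $p_0(d)$ and force $d\ge 4$. Once a uniform-in-time $L^2$ bound on $w$ and a small $\dot H^{s_c}$ bound on $v$ are secured, global existence of $u\in \dot H^{s_c}+L^2$ follows by a standard continuity argument, and the linear growth $\|u(t)\|_{\dot H^{s_c}}\lesssim 1+|t|$ comes from accumulating $\|N(u)(s)\|_{\dot H^{s_c}}$ through the Duhamel integral over $[0,t]$.
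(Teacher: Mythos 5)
Your proposal does not address the statement at all. The statement to be proved is Lemma~\ref{lem:Frac_Leibniz}, the fractional Leibniz (Kato--Ponce) inequality
\[
\big\||\nabla|^s(fg)\big\|_{L^p}\lesssim \big\||\nabla|^sf\big\|_{L^{p_1}}\|g\|_{L^{p_2}}+ \big\||\nabla|^sg\big\|_{L^{p_3}}\|f\|_{L^{p_4}},
\]
a purely harmonic-analytic estimate about products of Schwartz functions. What you have written instead is a sketch of the proof of the main global well-posedness result (Theorem~\ref{thm:main01}): the decomposition $u_0=v_0+w_0$, the weighted linear Strichartz estimate, the time cut-off equation for $v$, and the almost-conservation of the $L^2$ norm of $w$. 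None of that bears on the Leibniz rule; indeed your own sketch \emph{invokes} Lemma~\ref{lem:Frac_Leibniz} as a tool, so the argument is circular as a proof of that lemma.

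For the record, the paper does not prove this lemma either; it is quoted from the literature (Kenig--Ponce--Vega, Bourgain--Li, and D.~Li), where it is established by paraproduct decomposition: one splits $fg$ into the three Bony pieces $\sum_N P_N f\, P_{\ll N} g$, $\sum_N P_{\ll N} f\, P_N g$, and the high-high diagonal term, places $|\nabla|^s$ on the high-frequency factor in each piece using the frequency localization, and sums using Coifman--Meyer multiplier bounds together with vector-valued maximal and square-function estimates; the endpoint and sub-$L^1$ ranges ($p\le 1$, $p_i=\infty$) require additional care. If you are asked to supply a proof of this lemma, that is the route to take; the NLS machinery in your proposal is irrelevant here.
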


A simple consequence is the  following elementary inequality.
\begin{lem}\label{lem:frac_Hs}
For any $a>0, 1\le p\le \infty, 0\le \gamma<\frac dp$, and $ |\nabla|^\gamma g\in L^p(\R^d)$,
\begin{align}
\big\||\nabla|^\gamma\big(\chi_{\le a}g\big)\big\|_{L^p(\R^d)}\lesssim \big\||\nabla|^\gamma g\big\|_{L^p(\R^d)}. \label{15.37}
\end{align}
Here the implicit constant is independent on $a$.
The same estimate holds for $\chi_{\ge a}g$.
\end{lem}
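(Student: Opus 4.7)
The plan is a scaling-plus-Leibniz argument. First I would reduce to the case $a=1$. With the dilation $g_a(y):=g(ay)$ and the fact that $\chi_{\le a}(x)=\chi_{\le 1}(x/a)$, a change of variables shows
\begin{equation*}
\big\||\nabla|^\gamma(\chi_{\le a}g)\big\|_{L^p(\R^d)}=a^{d/p-\gamma}\big\||\nabla|^\gamma(\chi_{\le 1}g_a)\big\|_{L^p(\R^d)},\qquad \big\||\nabla|^\gamma g\big\|_{L^p(\R^d)}=a^{d/p-\gamma}\big\||\nabla|^\gamma g_a\big\|_{L^p(\R^d)}.
\end{equation*}
Hence it suffices to prove \eqref{15.37} at $a=1$, and the resulting constant is then automatically independent of $a$.

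For the $a=1$ case I would apply the fractional Leibniz rule (Lemma \ref{lem:Frac_Leibniz}) with the splitting $f=\chi_{\le 1}$, $g=g$, choosing exponents $(p_1,p_2)=(d/\gamma,\,p^\ast)$ in the first term and $(p_3,p_4)=(p,\infty)$ in the second, where $p^\ast:=dp/(d-\gamma p)$. This yields
\begin{equation*}
\big\||\nabla|^\gamma(\chi_{\le 1}g)\big\|_{L^p}\lesssim \big\||\nabla|^\gamma \chi_{\le 1}\big\|_{L^{d/\gamma}}\,\|g\|_{L^{p^\ast}}+\|\chi_{\le 1}\|_{L^\infty}\,\big\||\nabla|^\gamma g\big\|_{L^p}.
\end{equation*}
The first factor on the right is a fixed finite number since $\chi_{\le 1}$ is smooth and compactly supported, and the Sobolev embedding (Lemma \ref{lem:radial-Sob} with $\alpha=0$ and $s=\gamma$) converts $\|g\|_{L^{p^\ast}}$ into $\||\nabla|^\gamma g\|_{L^p}$. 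The assumption $0\le\gamma<d/p$ is exactly what makes $d/\gamma$ and $p^\ast$ admissible exponents. The case $\gamma=0$ is trivial, and the companion estimate for $\chi_{\ge a}g=g-\chi_{\le a}g$ follows from the triangle inequality.

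The main obstacle to make this clean is that Lemma \ref{lem:Frac_Leibniz} is stated only for $s\in(0,1)$, whereas the allowed range $\gamma\in[0,d/p)$ extends above $1$ when $p<d$. For $\gamma\ge 1$ I would either iterate the standard product rule on the integer part and then apply Lemma \ref{lem:Frac_Leibniz} to the fractional remainder, or directly invoke the general Kato-Ponce inequality from the references cited right after Lemma \ref{lem:Frac_Leibniz}, which holds for all $s>0$; either route gives the same bound with the same scale-invariant constant, completing the proof.
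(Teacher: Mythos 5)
Your proposal is correct and follows essentially the same route as the paper: both reduce to $0<\gamma<1$, both apply the fractional Leibniz rule (Lemma \ref{lem:Frac_Leibniz}) with exactly the split $(d/\gamma,\,p^*)$ and $(p,\infty)$, both use the Sobolev embedding to convert $\|g\|_{L^{p^*}}$ into $\||\nabla|^\gamma g\|_{L^p}$, both note that the norm $\||\nabla|^\gamma\chi_{\le 1}\|_{L^{d/\gamma}}$ is a scale-invariant quantity, and both handle $\chi_{\ge a}g$ by subtraction. The only cosmetic difference is that you rescale to $a=1$ before applying the Leibniz rule, whereas the paper applies the Leibniz rule first and then observes that $\||\nabla|^\gamma\chi_{\le a}\|_{L^{d/\gamma}}=\||\nabla|^\gamma\chi_{\le 1}\|_{L^{d/\gamma}}$ by the same scaling computation — these are interchangeable. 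You are also right to flag the $\gamma\ge 1$ issue; the paper disposes of it in one line ("use the standard Leibniz rule and the Hölder inequality to reduce the derivatives"), which is the same integer-part-plus-fractional-remainder idea you describe.
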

\begin{proof}
The case $\gamma=0$ is trivial.
Further, we may assume that $0<\gamma<1$. Otherwise, we can use the standard Leibniz rule and the H\"older inequality to reduce the derivatives.

From Lemma \ref{lem:Frac_Leibniz}, the H\"older and Sobolev inequalities, we have
\begin{align*}
\big\||\nabla|^\gamma\big(\chi_{\le a}g\big)\big\|_{L^p(\R^d)}
\lesssim &  \big\||\nabla|^\gamma\chi_{\le a}\big\|_{L^\frac{d}{\gamma}(\R^d)}\big\|g\big\|_{L^\frac{dp}{d-p\gamma}(\R^d)}
+ \big\|\chi_{\le a}\big\|_{L^\infty(\R^d)}\big\||\nabla|^\gamma g\big\|_{L^p(\R^d)}\\
\lesssim &  \Big(\big\||\nabla|^\gamma\chi_{\le a}\big\|_{L^\frac{d}{\gamma}(\R^d)}+\big\|\chi_{\le a}\big\|_{L^\infty(\R^d)}\Big)\big\||\nabla|^\gamma g\big\|_{L^p(\R^d)}.
\end{align*}
Note that $\|\chi_{\le a}\|_{L^\infty(\R^d)}\lesssim1$ and
\begin{align*}
\big\||\nabla|^\gamma\chi_{\le a}\big\|_{L^\frac{d}{\gamma}(\R^d)}=&a^{-\gamma}\big\||\nabla|^\gamma\chi_{\le 1}(\frac {\cdot}a)\big\|_{L^\frac{d}{\gamma}(\R^d)}
=\big\||\nabla|^\gamma\chi_{\le 1}\big\|_{L^\frac{d}{\gamma}(\R^d)}\lesssim1.
\end{align*}
Hence we obtain \eqref{15.37}.

Note that
$$
\chi_{\ge a}g=1-\chi_{\le a}g,
$$
then by \eqref{15.37}, we have
\begin{align*}
\big\|\chi_{\ge a}g\big\|_{\dot H^\gamma(\R^d)}
\lesssim
\big\|g\big\|_{\dot H^\gamma(\R^d)}+\big\|\chi_{\le a}g\big\|_{\dot H^\gamma(\R^d)}
\lesssim
\big\|g\big\|_{\dot H^\gamma(\R^d)}.
\end{align*}
Hence, the same estimate holds for $\chi_{\le a}g$. Thus we finish the proof of the lemma.
\end{proof}

Note that the condition $ \gamma<\frac dp$ in Lemma \ref{lem:frac_Hs} can be removed if $a\gtrsim 1$. We also need the following Littlewood-Paley inequality, see for example \cite{Tr-book-92}. 
\begin{lem}\label{lem:LittlePaley-ineq}
Let $p\in (1,+\infty)$ and $f\in L^p(\R^d)$. 
Then
\begin{align*}
\big\|f\big\|_{L^p(\R^d)}\sim \big\|P_{\le 1}f\big\|_{L^p(\R^d)} + \Big\|\Big(\sum\limits_{j=1}^\infty |P_j f|^2\Big)^\frac12\Big\|_{L^p(\R^d)}.
\end{align*}
\end{lem}

Moreover, we need the following mismatch result, which is  helpful in commuting the spatial and the frequency cutoffs.
\begin{lem}[Mismatch estimates, see \cite{LiZh-APDE}]\label{lem:mismatch}
Let $\phi_1$ and $\phi_2$ be smooth functions obeying
$$
|\phi_j| \leq 1 \quad \mbox{ and }\quad \mbox{dist}(\emph{supp}
\phi_1,\, \emph{supp} \phi_2 ) \geq A,
$$
for some large constant $A$.  Then for $\sigma>0$, $M\le 1$ and $1\leq r\leq
q\leq \infty$,
\begin{align}
\bigl\| \phi_1 |\nabla|^\sigma P_{\leq M} (\phi_2 f)
\bigr\|_{L^q_x(\R^d)}
    &+ \bigl\| \phi_1 \nabla |\nabla|^{\sigma-1} P_{\leq M} (\phi_2 f) \bigr\|_{L^q_x(\R^d)}
    \lesssim A^{-\sigma-\frac dr + \frac dq} \|\phi_2 f\|_{L^r_x(\R^d)};\label{eqs:lem-mismath-1}\\
    \bigl\| \phi_1  \nabla P_{\leq M} (\phi_2 f)
\bigr\|_{L^q_x(\R^d)}&\lesssim_m M^{1-m}A^{-m}\|f\|_{L^q_x(\R^d)}, \mbox{
for any } m\ge0.\label{eqs:lem-mismath-2}
\end{align}
\end{lem}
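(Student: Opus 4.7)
My plan is to reduce both estimates to pointwise kernel bounds combined with Young's inequality. For the first estimate, let $K_\sigma$ denote the convolution kernel of $|\nabla|^\sigma P_{\le M}$, so that $\widehat{K_\sigma}(\xi)=|\xi|^\sigma\chi_{\le M}(\xi)$; rescaling $\xi=M\eta$ gives $K_\sigma(z)=M^{d+\sigma}H_\sigma(Mz)$ with $\widehat{H_\sigma}(\eta)=|\eta|^\sigma\chi_{\le 1}(\eta)$. The first step will be to establish the pointwise bound
\begin{align*}
|H_\sigma(z)|\lesssim\langle z\rangle^{-d-\sigma},\qquad\text{hence}\qquad |K_\sigma(z)|\lesssim\min\bigl(M^{d+\sigma},\,|z|^{-d-\sigma}\bigr).
\end{align*}
I would derive this via a Littlewood--Paley decomposition $|\eta|^\sigma\chi_{\le 1}(\eta)=\sum_{k\le 0}|\eta|^\sigma\chi_k(\eta)$, combined with the scale-invariant bound $|\mathcal F^{-1}(|\eta|^\sigma\chi_k)(z)|\lesssim_N 2^{k(d+\sigma)}\langle 2^kz\rangle^{-N}$, and summation in $k$ (splitting at $k_0$ where $2^{k_0}|z|\sim 1$).

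Once this bound is in hand, I will use $|\phi_1|\le 1$ and $\mathrm{dist}(\supp\phi_1,\supp\phi_2)\ge A$ to dominate the operator pointwise by convolution against $K_\sigma\mathbf{1}_{|z|\ge A}$, then apply Young's inequality with $\tfrac{1}{s}=1-\tfrac{1}{r}+\tfrac{1}{q}$ (admissible since $r\le q$):
\begin{align*}
\bigl\|\phi_1|\nabla|^\sigma P_{\le M}(\phi_2 f)\bigr\|_{L^q_x}\lesssim\|K_\sigma\mathbf{1}_{|z|\ge A}\|_{L^s}\|\phi_2 f\|_{L^r_x}\lesssim A^{d/s-d-\sigma}\|\phi_2 f\|_{L^r_x},
\end{align*}
since $\int_{|z|\ge A}|z|^{-(d+\sigma)s}\,dz\lesssim A^{d-(d+\sigma)s}$, with finiteness guaranteed by $\sigma>0$ and $r\le q$ (which force $(d+\sigma)s>d$). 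A short computation checks $d/s-d-\sigma=-\sigma-d/r+d/q$, matching the claimed exponent. The variant $\phi_1\nabla|\nabla|^{\sigma-1}P_{\le M}$ follows from the same argument since its symbol $i\xi|\xi|^{\sigma-1}\chi_{\le M}(\xi)$ obeys the identical bound near the origin.

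For the second estimate, the kernel $K_1$ of $\nabla P_{\le M}$ has Fourier symbol $i\xi\chi_{\le M}(\xi)$, which is smooth and compactly supported; writing $K_1(z)=M^{d+1}H_1(Mz)$ with $H_1$ Schwartz gives $|K_1(z)|\lesssim_N M^{d+1}\langle Mz\rangle^{-N}$ for every $N$. Taking $N=d+m$ and changing variables $w=Mz$,
\begin{align*}
\|K_1\mathbf{1}_{|z|\ge A}\|_{L^1_z}\le M\int_{|w|\ge MA}\langle w\rangle^{-d-m}\,dw\lesssim_m M(MA)^{-m}=M^{1-m}A^{-m},
\end{align*}
where the tail integral is $\lesssim(MA)^{-m}$ for $MA\ge 1$ and bounded by a constant (hence by $(MA)^{-m}$, since $MA\le 1$ forces $(MA)^{-m}\ge 1$) otherwise. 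The claim then follows from the convolution bound $L^1\ast L^q\subset L^q$ together with $\|\phi_2 f\|_{L^q}\le\|f\|_{L^q}$. The only delicate technical point throughout will be justifying the $\langle z\rangle^{-d-\sigma}$ decay of $H_\sigma$ in the first estimate, due to the non-smooth singularity of $|\eta|^\sigma$ at the origin; everything else amounts to standard convolution bookkeeping.
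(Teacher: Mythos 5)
The paper does not prove Lemma \ref{lem:mismatch}; it cites it verbatim from \cite{LiZh-APDE}, so there is no in-paper proof to compare against. Your argument, however, is correct and is essentially the standard proof of these mismatch estimates.

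A few sanity checks on the details, all of which you got right. The kernel bound $|H_\sigma(z)|\lesssim\langle z\rangle^{-d-\sigma}$ for $\widehat{H_\sigma}(\eta)=|\eta|^\sigma\chi_{\le 1}(\eta)$ is precisely what a dyadic decomposition at the origin gives, and is the one genuinely nontrivial step; the $|\eta|^\sigma$ singularity at $\eta=0$ is what caps the achievable decay at $d+\sigma$, in contrast with the Schwartz decay available for the smooth, compactly supported symbol $i\xi\chi_{\le 1}(\xi)$ used in the second estimate. The Young exponent $1/s=1-1/r+1/q$ is admissible exactly because $r\le q$, and $(d+\sigma)s>d$ follows from $s\ge 1$ and $\sigma>0$, so the tail integral $\int_{|z|\ge A}|z|^{-(d+\sigma)s}\,dz\sim A^{d-(d+\sigma)s}$ converges and gives the claimed exponent after the algebra $d/s-d-\sigma=-\sigma-d/r+d/q$. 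For the second estimate, your case split at $MA=1$ is the right bookkeeping; one small touch-up is that taking $N=d+m$ fails to give a convergent tail when $m=0$, so for $m=0$ one should instead note directly that $\|K_1\|_{L^1}\lesssim M$ (since $H_1$ is Schwartz), which already yields the $m=0$ case of \eqref{eqs:lem-mismath-2}. For $m>0$ your computation is exactly right. Finally, you correctly observe that the operator $\nabla|\nabla|^{\sigma-1}P_{\le M}$ has symbol whose modulus is $|\xi|^\sigma\chi_{\le M}(\xi)$, so the same kernel bound covers the second term of \eqref{eqs:lem-mismath-1}.
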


Furthermore, we need the following  elementary formulas. The first one is 

\begin{lem}\label{lem:muli-Lei-formula}
Let the scale function $g\in \mathcal S(\R^d)$, and the phase function $\phi\in (\mathcal S(\R^d))$ with 
$$
\inf\limits_{y\in\R^d}|\nabla \phi(y)|>0,
$$
then for any integer $N$,
\begin{equation}\label{for:phs-inbyparts}
\int_{\R^d} e^{i\phi(y)} g(y)\,dy=
\int_{\R^d}e^{i\phi(y)}\>\nabla_y\cdot\Big(\frac{\nabla_y\phi}{i|\nabla_y\phi|^2}\nabla_y\Big)^{N-1}\cdot\Big(\frac{\nabla_y\phi}{i|\nabla_y\phi|^2}g(y)\Big)\,dy.
\end{equation}
%Moreover, for any smooth function $\phi$ with $\nabla_\xi \phi\neq 0$, and for any multi-index $\beta\in \R^4$,
%\begin{equation}\label{eqs:2617.03}
%\partial_\xi^\beta\Big(\frac{\nabla_{\xi} \phi}{|\nabla_{\xi}
%\phi|^2}\Big)=\sum\limits_{|\beta_1|+\cdots+|\beta_m|=|\alpha|}O\Big(\frac{\partial_\xi^{\beta_1}\nabla_{\xi}
%\phi}{|\nabla_\xi \phi|}\cdots \frac{\partial_\xi^{\beta_1}\nabla_{\xi}
%\phi}{|\nabla_\xi \phi|}\Big)\cdot \frac{1}{|\nabla_\xi \phi|}.
%\end{equation}
\end{lem}
\begin{proof}
Note that 
\begin{align*}
e^{i\phi}=\nabla_y e^{i\phi}\cdot\frac{\nabla_y\phi}{i|\nabla_y\phi|^2}.
\end{align*}
Then we write 
\begin{equation*}
\int_{\R^d} e^{i\phi(y)} g(y)\,dy=
\int_{\R^d} \nabla_y e^{i\phi}\cdot\frac{\nabla_y\phi}{i|\nabla_y\phi|^2} g(y)\,dy.
\end{equation*}
By integration-by-parts, we get 
\begin{equation*}
\int_{\R^d} e^{i\phi(y)} g(y)\,dy=
\int_{\R^d} e^{i\phi}\> \nabla_y\cdot\left(\frac{\nabla_y\phi}{i|\nabla_y\phi|^2} g(y)\right)\,dy.
\end{equation*}
This obtains the case of $N=1$. 
Denote that
$$
\Lambda_N(g)=\nabla_y\cdot\Big(\frac{\nabla_y\phi}{i|\nabla_y\phi|^2}\nabla_y\Big)^{N-1}\cdot\Big(\frac{\nabla_y\phi}{i|\nabla_y\phi|^2}g(y)\Big),
$$
then we have
$$
\Lambda_{N+1}(g)=\nabla_y\cdot\left(\frac{\nabla_y\phi}{i|\nabla_y\phi|^2} \Lambda_N(g)\right).
$$
The identity is then followed from the induction.
\end{proof}
The second one is 
\begin{lem}\label{lem:muli-Lei-formula}
Let the vector function $f\in (\mathcal S(\R^d))^d$ and the scale function $g\in \mathcal S(\R^d)$, then for any integer $N$,
\begin{equation*}
\nabla_{\xi}\cdot \big(f\>\nabla_{\xi} \big)^{N-1}\cdot
(fg)=\sum\limits_{\begin{subarray}{c}
l_1,\cdots,l_N\in\R^d,l'\in\R^d;\\
|l_j|\le
j;|l_1|+\cdots+|l_N|+|l'|=N
\end{subarray}}
C_{l_1,\cdots,l_N,l'}\partial_\xi^{l_1}f\cdots
\partial_\xi^{l_N}f\>\partial_\xi^{l'}g.
\end{equation*}
%Moreover, for any smooth function $\phi$ with $\nabla_\xi \phi\neq 0$, and for any multi-index $\beta\in \R^4$,
%\begin{equation}\label{eqs:2617.03}
%\partial_\xi^\beta\Big(\frac{\nabla_{\xi} \phi}{|\nabla_{\xi}
%\phi|^2}\Big)=\sum\limits_{|\beta_1|+\cdots+|\beta_m|=|\alpha|}O\Big(\frac{\partial_\xi^{\beta_1}\nabla_{\xi}
%\phi}{|\nabla_\xi \phi|}\cdots \frac{\partial_\xi^{\beta_1}\nabla_{\xi}
%\phi}{|\nabla_\xi \phi|}\Big)\cdot \frac{1}{|\nabla_\xi \phi|}.
%\end{equation}
\end{lem}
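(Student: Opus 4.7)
The identity is a purely combinatorial (Leibniz-type) statement, so the natural approach is induction on $N$, with careful index bookkeeping. I would set $D := f\cdot\nabla_\xi = \sum_i f_i\,\partial_{\xi_i}$, interpret $(f\nabla_\xi)^{N-1}$ as applying the scalar operator $D^{N-1}$ componentwise to a vector, and regard the outer ``$\nabla_\xi\cdot$'' as the divergence. Expanding $\nabla_\xi\cdot(f\nabla_\xi)^{N-1}(fg) = \sum_i \partial_{\xi_i}\,D^{N-1}(f_i g)$ then reduces everything to tracking what $D^{N-1}$ does to a scalar.

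The convenient intermediate claim to prove first, by induction on $k$, is
\begin{equation*}
D^{k}(h)\;=\;\sum_{\substack{|l_1|+\cdots+|l_k|+|l'|=k \\ |l_j|\le j-1}} c_{l_1,\ldots,l_k,l'}\,(\partial_\xi^{l_1}f)\cdots(\partial_\xi^{l_k}f)\,\partial_\xi^{l'}h,
\end{equation*}
with all vector indices implicitly contracted into the scalar constants. The base case $k=0$ is $D^{0}h=h$. For the step $k\to k+1$, one applies $D=f\cdot\nabla_\xi$: the outer $f$ becomes a brand-new factor that I place in the \emph{first} slot (with $l_1^{\text{new}}=0$, so $|l_1^{\text{new}}|\le 0 = 1-1$), and after relabelling $l_j\mapsto l_{j+1}^{\text{new}}$ the old constraint $|l_j|\le j-1$ becomes $|l_{j+1}^{\text{new}}|\le j =(j+1)-1$. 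The single $\nabla_\xi$ then distributes via the Leibniz rule onto either some $\partial_\xi^{l_j}f$ (promoting $|l_j|\to|l_j|+1\le j$, still consistent with the slot bound) or onto $\partial_\xi^{l'}h$. Summing the cases and collecting like terms yields the formula at level $k+1$.

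To obtain the stated identity I take $k=N-1$ and $h=f_i g$, expand $D^{N-1}(f_i g)$ by a single application of the Leibniz rule to separate the $f_i$-factor from $g$ (this merely shifts one derivative freely between the isolated $f_i$ and the $\partial^{l'}g$ piece), and then apply $\sum_i\partial_{\xi_i}$. The outer divergence contributes one extra derivative, which again by Leibniz either falls on an existing $\partial_\xi^{l_j}f$, on the isolated $f_i$, or on $\partial_\xi^{l'}g$. Promoting the isolated $f_i$ to the $N$-th slot, the worst case $|l_N|\le N$ is exactly the content of the outer $\nabla_\xi\cdot$ plus the fact that $f_i$ originally carried no derivatives, and all the other slot bounds $|l_j|\le j$ are preserved by the same shift-and-increment argument as above. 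Collapsing the vector indices into the constants $C_{l_1,\ldots,l_N,l'}$ gives exactly the stated decomposition with $N$ factors of $f$, one factor of $g$, total derivative count $N$, and the nesting constraints $|l_j|\le j$.

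The only real obstacle is combinatorial bookkeeping: making sure that each inductive step is accompanied by a consistent reindexing convention that turns ``slot $j$ at level $k$'' into ``slot $j+1$ at level $k+1$'', and verifying that adding at most one extra derivative per step never violates the cap $|l_j|\le j$. No analytic input beyond the smoothness of $f$ and $g$ is required, and the specific values of the combinatorial constants $C_{l_1,\ldots,l_N,l'}$ never need to be computed---the statement only asserts that \emph{some} finite decomposition of this shape exists.
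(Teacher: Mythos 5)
Your induction is sound, and in substance it is the paper's own argument: the paper merely records the recursion $\Lambda_N(f,g)=\nabla_\xi\cdot\big(f\,\Lambda_{N-1}(f,g)\big)$ and applies the Leibniz rule at each step, which is precisely your shift-and-increment bookkeeping made explicit. One caveat deserves attention. You identify $(f\nabla_\xi)^{N-1}$ with the componentwise scalar operator $(f\cdot\nabla_\xi)^{N-1}$, reducing everything to $\sum_i\partial_{\xi_i}D^{N-1}(f_ig)$ with $D=\sum_jf_j\partial_{\xi_j}$. The operator the paper actually means --- the one produced by the repeated integration by parts leading to \eqref{10.03-I} --- is the nested divergence $\Lambda_N=\nabla_\xi\cdot(f\,\Lambda_{N-1})$ with $\Lambda_1=\nabla_\xi\cdot(fg)$, and for $N\ge2$ the two operators differ: already at $N=2$ one has $\sum_i\partial_i\big(f_i\sum_j\partial_j(f_jg)\big)$ versus your $\sum_i\partial_i\big(\sum_jf_j\partial_j(f_ig)\big)$. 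This does not damage the result, since your induction transfers verbatim if you peel the operator from the outside instead of the inside (each step is ``multiply by a fresh underived copy of $f$, then distribute one divergence by Leibniz''), and the conclusion --- $N$ factors of $f$, one factor of $g$, total derivative order $N$, and the slot bounds $|l_j|\le j$ --- is insensitive to which ordering is used. But as written your first displayed reduction is not an identity for the operator appearing in the statement, so you should either justify your reading of the notation or run the induction directly on $\Lambda_N$.
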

\begin{proof}
When $N=1$, it is directly followed from the Leibniz rule.
Denote that
$$
\Lambda_N(f,g)=\nabla_{\xi}\cdot \big(f\>\nabla_{\xi} \big)^{N-1}\cd
(fg),
$$
then we have
$$
\Lambda_N(f,g)= \nabla_{\xi}\cdot \big(f\Lambda_{N-1}(f,g)\big).
$$
The identity is then followed from the induction.
\end{proof}

\subsection{Linear Schr\"odinger operator}

Let the operator $S(t)=e^{it\Delta}$ be the linear Schr\"odinger flow, that is,
$$
(i\partial_t+\Delta)S(t)\equiv 0.
$$
The following are some fundamental properties of the operator $e^{it\Delta}$. The first is the explicit formula, see  for example Cazenave \cite{Cazenave-book}.
\begin{lem}\label{lem:formula-St}
For all $\phi\in \mathcal S(\R^d)$, $t\neq 0$,
$$
S(t)\phi(x)=\frac{1}{(4\pi it)^{\frac d2}}\int_{\R^d} e^{\frac{i|x-y|^2}{4t}}\phi(y)\,dy.
$$
Moreover, for any $r\ge2$,
$$
\|S(t)\phi\|_{L^r_x(\R^d)}\lesssim |t|^{-d(\frac12-\frac1r)}\|\phi\|_{L^{r'}(\R^d)}.
$$
\end{lem}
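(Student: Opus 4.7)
The plan is to prove the two parts of the lemma in sequence, deriving the explicit convolution kernel first and then obtaining the dispersive bound by interpolation between the two endpoints $r=\infty$ and $r=2$.

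For the kernel formula, I would start from the Fourier representation. Since $(i\partial_t+\Delta)S(t)\phi=0$ with $S(0)\phi=\phi$, taking the spatial Fourier transform gives $\widehat{S(t)\phi}(\xi)=e^{-it|\xi|^2}\hat\phi(\xi)$, so that
\begin{equation*}
S(t)\phi(x)=(2\pi)^{-d}\int_{\R^d}e^{ix\cdot\xi}e^{-it|\xi|^2}\hat\phi(\xi)\,d\xi=(K_t*\phi)(x),
\end{equation*}
where $K_t$ is the inverse Fourier transform of $e^{-it|\xi|^2}$. To evaluate $K_t$ rigorously, I would first consider the analytic family $K_z(x)=(4\pi z)^{-d/2}e^{-|x|^2/(4z)}$ for $\re z>0$, which satisfies $\widehat{K_z}(\xi)=e^{-z|\xi|^2}$ as a standard Gaussian computation, and then extend by analytic continuation in $z$ to the imaginary axis $z=it$ with the branch $(4\pi it)^{d/2}=|4\pi t|^{d/2}e^{i(d/4)\pi\,\mathrm{sgn}\,t}$. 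Convolving with $\phi\in\mathcal S(\R^d)$ and interpreting as an oscillatory integral then yields the stated formula for $t\neq 0$.

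For the dispersive estimate, I would argue by interpolation. The endpoint $r=\infty$ follows immediately by reading off the pointwise bound from the kernel: $|S(t)\phi(x)|\le (4\pi|t|)^{-d/2}\|\phi\|_{L^1}$, giving
\begin{equation*}
\|S(t)\phi\|_{L^\infty_x}\lesssim |t|^{-d/2}\|\phi\|_{L^1_x}.
\end{equation*}
The endpoint $r=2$ is just unitarity of $S(t)$ on $L^2$, which follows from Plancherel and $|e^{-it|\xi|^2}|=1$:
\begin{equation*}
\|S(t)\phi\|_{L^2_x}=\|\phi\|_{L^2_x}.
\end{equation*}
Applying the Riesz--Thorin interpolation theorem with exponents determined by $\frac{1}{r}=\frac{\theta}{\infty}+\frac{1-\theta}{2}$ (equivalently $\theta=1-\frac{2}{r}$ for $r\ge 2$), and matching with $r'$ on the source side, gives
\begin{equation*}
\|S(t)\phi\|_{L^r_x}\lesssim |t|^{-d(\frac12-\frac1r)}\|\phi\|_{L^{r'}_x}.
\end{equation*}

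The only subtle point is the justification of the Gaussian analytic continuation onto the imaginary axis, since for $z=it$ the kernel $K_t$ is not integrable and the identity $S(t)\phi=K_t*\phi$ must be interpreted in a limiting sense (e.g., by approximating $it$ by $\epsilon+it$ with $\epsilon\downarrow 0$ and using that $\phi\in\mathcal S$). Once this is handled carefully, everything else is routine. I do not expect any other serious obstacle; the interpolation step is standard once the two endpoint bounds are in hand.
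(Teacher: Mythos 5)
Your proof is correct and is the standard argument; the paper itself gives no proof of this lemma, simply citing Cazenave's book, where exactly this derivation (Gaussian analytic continuation for the kernel, then Riesz--Thorin interpolation between the $L^1\to L^\infty$ dispersive bound and $L^2$ unitarity) appears. Your attention to the limiting interpretation of $K_{it}*\phi$ via $z=\epsilon+it$, $\epsilon\downarrow 0$ is the right way to handle the only delicate point.
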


%\subsection{The standard Strichartz estimates}
The following is the  standard Strichartz estimates, see for example \cite{KeTa-Strichartz}.
\begin{lem}\label{lem:strichartz}
Let $I$ be a compact time interval and let $u: I\times\R^d \to
\mathbb \R$ be a solution to the inhomogeneous Schr\"odinger equation
$$
iu_{t}- \Delta u + F = 0.
$$
Then for any $t_0\in I$,  any pairs $(q_j,r_j), j=1,2$ satisfying
$$
 q_j\ge 2, \,\, r_j\ge 2,\,\,  \mbox{and }\,\, \frac2{q_j}+\frac d{r_j}=\frac d2,
$$
the following estimates hold,
\begin{align*}
\bigl\|u\bigr\|_{C(I;L^2(\R^d))}+\big\|u\big\|_{L^{q_1}_tL^{r_1}_x(I\times\R^d)}
\lesssim \bigl\|u(t_0)\bigr\|_{L^2_x(\R^d)}+
\bigl\|F\bigr\|_{L^{q_2'}_tL^{r_2'}_x(I\times\R^d)}.
\end{align*}
\end{lem}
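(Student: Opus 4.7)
The plan is to establish both estimates by the classical $TT^*$ method combined with Duhamel's formula. Writing the solution (up to sign conventions) as
$$u(t) = S(t-t_0)u(t_0) + c\int_{t_0}^t S(t-s)F(s)\,ds,$$
it suffices to prove separately the homogeneous estimate $\|S(\cdot)f\|_{L^{q}_t L^{r}_x(\R\times\R^d)} \lesssim \|f\|_{L^2_x(\R^d)}$ for every admissible pair $(q,r)$, and the corresponding inhomogeneous retarded estimate. The continuity statement $u \in C(I;L^2)$ then follows from strong continuity of the unitary group $S(t)$ on $L^2$ for the homogeneous part, together with an approximation argument for the Duhamel term (approximating $F$ in $L^{q_2'}_tL^{r_2'}_x$ by smooth, compactly supported functions, for which the integral is visibly $L^2$-continuous, and then passing to the limit using the very Strichartz estimate being proved).

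For the homogeneous estimate I would define the operator $Tf(t) := S(t)f$, whose formal adjoint is $T^*G = \int_\R S(-s)G(s)\,ds$, so that
$$TT^*G(t) = \int_\R S(t-s)G(s)\,ds.$$
By the dispersive decay from Lemma \ref{lem:formula-St}, $\|S(t-s)g\|_{L^{r}_x} \lesssim |t-s|^{-d(\frac12-\frac1r)}\|g\|_{L^{r'}_x}$. Combining this with Minkowski in $x$ and Hardy--Littlewood--Sobolev in $t$, which is available precisely when $d(\frac12-\frac1r) = \frac{2}{q} < 1$, i.e.\ when $q > 2$, yields $\|TT^*G\|_{L^q_tL^r_x} \lesssim \|G\|_{L^{q'}_tL^{r'}_x}$. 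The standard $TT^*$ duality then gives the homogeneous Strichartz estimate.

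For the mixed inhomogeneous bound, the same analysis controls the non-retarded bilinear form $\int_\R S(t-s)F(s)\,ds$ from $L^{q_2'}_tL^{r_2'}_x$ into $L^{q_1}_tL^{r_1}_x$ for any two non-endpoint admissible pairs, again via the dispersive bound and Hardy--Littlewood--Sobolev. The retarded truncation $\mathbf{1}_{s<t}$ is then reinstated using the Christ--Kiselev lemma, which applies because $q_1 > q_2'$ whenever both pairs stay away from the endpoint.

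The genuine obstacle is the double endpoint case $q_1=q_2=2$, $r_j = \tfrac{2d}{d-2}$, available in dimensions $d \ge 3$: here Hardy--Littlewood--Sobolev degenerates at the critical exponent, and Christ--Kiselev is inapplicable since $q_1 = q_2'$. This case requires the Keel--Tao bilinear interpolation argument, which decomposes the time integral into dyadic shells $|t-s| \sim 2^k$ and interpolates on each shell between the trivial $L^2_x \to L^2_x$ isometry of $S(t-s)$ and its dispersive $L^1_x \to L^\infty_x$ bound, followed by atomic summation in $k$. Once the endpoint is secured, Lemma \ref{lem:strichartz} follows in the full generality stated.
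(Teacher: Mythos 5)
Your proposal is correct and is precisely the standard argument (dispersive estimate plus $TT^*$ and Hardy--Littlewood--Sobolev off the endpoint, Christ--Kiselev for the retarded mixed-pair estimates, and the Keel--Tao bilinear dyadic interpolation for the double endpoint $q_1=q_2=2$) from the reference \cite{KeTa-Strichartz} that the paper cites for this lemma without reproving it. You correctly identify that Christ--Kiselev applies exactly when $q_2'<q_1$, which fails only in the double endpoint case, so nothing is missing.
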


We also need the special Strichartz estimates for radial data, which was firstly proved by Shao \cite{Shao}, and then developed in \cite{CL,GW}.
\begin{lem}[Radial Strichartz estimates]\label{lem:radial-Str}
Let $g\in L^2(\R^d)$ be a radial function,  then for any triple $(q,r,\gamma)$ satisfying
\begin{align}
\gamma\in\R, \,\, q\ge 2, \,\, r> 2,\,\, \frac2q+\frac {2d-1}{r}<\frac{2d-1}2,\,\, \mbox{and }\,\, \frac2q+\frac d{r}=\frac d2+\gamma,\label{Str-conditions}
\end{align}
we have that
$$
\big\||\nabla|^{\gamma} e^{it\Delta}g\big\|_{L^q_{t}L^r_x(\R\times\R^d)}\lesssim  \big\|g\big\|_{L^2(\R^d)}.
$$
\end{lem}
Followed by the standard TT*-method, a direct consequence of the lemma above is
\begin{cor}[Inhomogeneous radial Strichartz estimates]\label{cor:radial-Str-inhomo}
Let $F\in L^{\tilde q'}_{t}L^{\tilde r'}_x(\R^{d+1})$ be a radial function in $x$, then
$$
\Big\|\int_0^t e^{i(t-s)\Delta} |\nabla|^\gamma F(s)\,ds\Big\|_{L^q_{t}L^r_x(\R^{1+d})}\lesssim \big\||\nabla|^{-\tilde \gamma} F\big\|_{L^{\tilde q'}_{t}L^{\tilde r'}_x(\R^{1+d})},
$$
where the triples  $(q,r,\gamma)$,   $(\tilde q,\tilde r,\tilde\gamma)$ satisfy \eqref{Str-conditions}.
\end{cor}
The following is a remark regarding Lemma \ref{lem:radial-Str} and Corollary \ref{cor:radial-Str-inhomo}.
 \begin{remark}\label{rem:derivatives-radial}
One may ask about  the optimal smoothing effect one can gain from the radial Strichartz estimates, corresponding to the supremum of $\gamma$ as above.  In fact, from Lemma \ref{lem:radial-Str},   we find that
$$
\gamma<\frac2q\cdot \frac{d-1}{2d-1}.
$$
An equivalent inequality is 
$$
\gamma<(d-1)\left(\frac12-\frac1r\right).
$$
Hence, $\frac2q\cdot \frac{d-1}{2d-1}-$ is the most derivatives we can gain from the radial Strichartz estimates when we fix $q$; 
while $(d-1)(\frac12-\frac1r)-$ is the most derivatives we can gain from the radial Strichartz estimates when we fix $r$.  
In particular, when $q=2$, we denote the triples   
$$
(q_0,r_0,\gamma_0):=\left(2, \frac{2(2d-1)}{2d-3}+, \frac{d-1}{2d-1}-\right).
$$
Therefore, $\gamma_0=\frac{d-1}{2d-1}-$  is the most derivatives we can gain from the radial Strichartz estimates. While we can gain double derivatives from the inhomogeneous radial Strichartz estimates by setting $\tilde\gamma=\gamma$ in Corollary \ref{cor:radial-Str-inhomo}.

Further, we note that from Corollary \ref{cor:radial-Str-inhomo}, for any $h\in L^2_t L^{r_0'}_x(\R\times \R^d)$,
\begin{align*}
\Big\|\int_0^t e^{i(t-s)\Delta} |\nabla|^{\gamma_0}  h(s)\,ds\Big\|_{L^2_{t}L^{\frac{2d}{d-2}}_x(\R\times\R^{d})}
\lesssim 
\big\|h\big\|_{L^2_{t}L^{r_0'}_x(\R\times\R^{d})}.
\end{align*}
Further, by Sobolev's inequality, it infers that  
\begin{align}\label{radial-Str-inhomo-conseq}
\Big\|\int_0^t e^{i(t-s)\Delta}  h(s)\,ds\Big\|_{L^2_{t}L^{\rho_0}_x(\R\times\R^{d})}
\lesssim 
\big\|h\big\|_{L^2_{t}L^{r_0'}_x(\R\times\R^{d})},
\end{align}
where $\rho_0$ satisfies that 
$$
\frac1{\rho_0}=\frac{d-2}{2d}-\frac{\gamma_0}{d}=\frac12-\frac{3d-2}{d(2d-1)}+.
$$
\end{remark}

Now we need the following specific truncated inhomogeneous  radial Strichartz estimates. 
\begin{lem}\label{lem:trun-inho-Str}
Let $T\ge 2, r\ge \rho_0$, and let the triples $(\tilde q,\tilde r,\tilde \gamma)$ satisfy that for some $\theta\in [0,1]$, 
$$
\frac{1}{\tilde q}\le (1-\theta)+\frac12\theta;\quad 
\frac{1}{\tilde r}=\frac{1-\theta}{r'}+\frac\theta{r_0'};\quad
\tilde\gamma=\theta \Big(\frac{d-2}{2}-\gamma_0-\frac{d}r\Big).
$$ 
Suppose that $F$ is a radial function in $x$ such that $|\nabla|^{\tilde \gamma}F\in L^{\tilde q}_{t}L^{\tilde r}_x([0,2]\times\R^{d})$, then
\begin{align*}
\Big\|\int_0^t e^{i(t-s)\Delta}  \chi_{\le 1}(s) F(s)\,ds\Big\|_{L^1_{t}L^{r}_x([T,2T]\times\R^{d})}\lesssim T^{[-d(\frac12-\frac1r)+1](1-\theta)+\frac12\theta}\big\||\nabla|^{\tilde\gamma} F\big\|_{L^{\tilde q}_{t}L^{\tilde r}_x([0,2]\times\R^{d})}.
\end{align*}
\end{lem}
\begin{proof}
By Lemma \ref{lem:formula-St}, 
\begin{align*}
\Big\|\int_0^t e^{i(t-s)\Delta} \chi_{\le 1}(s) F(s)\,ds\Big\|_{L^r_x(\R^{d})}
\lesssim 
\int_0^t |t-s|^{-d(\frac12-\frac1r)} \chi_{\le 1}(s) \big\|F(s)\big\|_{L^{r'}_x(\R^{d})}\,ds.
\end{align*}
Since $t-s\sim t$ when $t\ge 2, s\le \frac{11}{10}$, we further have that 
\begin{align*}
\Big\|\int_0^t e^{i(t-s)\Delta} \chi_{\le 1}(s) F(s)\,ds\Big\|_{L^r_x(\R^{d})}
\lesssim 
|t|^{-d(\frac12-\frac1r)}\big\|F\big\|_{L^1_tL^{r'}_x([0,2]\times \R^{d})}.
\end{align*}
This gives that 
\begin{align}\label{Inh-1}
\Big\|\int_0^t e^{i(t-s)\Delta} \chi_{\le 1}(s) F(s)\,ds\Big\|_{L^1_{t}L^{r}_x([T,2T]\times\R^{d})}
\lesssim 
T^{-d(\frac12-\frac1r)+1}\big\|F\big\|_{L^1_tL^{r'}_x([0,2]\times \R^{d})}.
\end{align}

On the other hand,  
applying the Sobolev embedding and the inequality \eqref{radial-Str-inhomo-conseq}, we obtain that for any $r\ge \rho_0$, 
\begin{align*}
\Big\|\int_0^t e^{i(t-s)\Delta} \chi_{\le 1}(s) F(s)\,ds\Big\|_{L^2_{t}L^{r}_x(\R\times\R^{d})}
\lesssim &
\Big\|\int_0^t e^{i(t-s)\Delta} \chi_{\le 1}(s) |\nabla|^{d(\frac1{\rho_0}-\frac1r)} F(s)\,ds\Big\|_{L^2_{t}L^{\rho_0}_x(\R\times\R^{d})}\\
\lesssim &
\big\|\chi_{\le 1}(t) |\nabla|^{d(\frac1{\rho_0}-\frac1r)} F(t)\big\|_{L^2_{t}L^{r_0'}_x(\R\times\R^{d})}.
\end{align*}
Therefore, by the time support, we get that 
\begin{align}\label{Inh-2}
\Big\|\int_0^t e^{i(t-s)\Delta} \chi_{\le 1}(s) F(s)\,ds\Big\|_{L^1_{t}L^{r}_x([T,2T]\times\R^{d})}
\lesssim &
T^\frac12\big\| |\nabla|^{d(\frac1{\rho_0}-\frac1r)} F\big\|_{L^2_{t}L^{r_0'}_x([0,2]\times\R^{d})}.
\end{align}

Then by interpolation between \eqref{Inh-1}, \eqref{Inh-2} and  H\"older's inequality in time, we obtain the desired estimates.
\end{proof}

\begin{remark}\label{rem:trun-inho-Stri}
 Note that if 
\begin{align}\label{theta-cond}
(-d(\frac12-\frac1r)+1)(1-\theta)+\frac12\theta<0,
\end{align}
then by dyadic decomposition in time, we have that 
\begin{align}\label{est-trun-inho-Stri}
\Big\|\int_0^t e^{i(t-s)\Delta}  \chi_{\le 1}(s) F(s)\,ds\Big\|_{L^1_{t}L^{r}_x([2,+\infty)\times\R^{d})}
\lesssim 
\big\||\nabla|^{\tilde\gamma} F\big\|_{L^{\tilde q}_{t}L^{\tilde r}_x([0,2]\times\R^{d})}.
\end{align}
\end{remark}

\section{Linear flow estimates on localized functions}
As the first part, we begin with the linear flow estimates.
In this section, we give the following estimates.
\begin{prop}\label{prop:lineares-cpt-largetime-I}
Let  $2\le r<\infty$, then for any  $t: |t|\ge \frac {100}N$, and any $s$ satisfying
$$
0\le s<(d-2)\big(\frac12-\frac1r\big)+s_c,
$$
the following estimate holds,
\begin{align}
\Big\||\nabla|^{s }\big(e^{it\Delta}\big(\chi_{\le 10}P_{\ge N}g\big)\big)\Big\|_{L^r_x(\R^d)}
\lesssim &
N^{-(d-2)\big(\frac12-\frac1r\big)+s-s_c}|t|^{-(d-1)\big(\frac12-\frac1r\big)}\|P_{\ge N}g\|_{\dot H^{s_c}(\R^d)}.\label{13.26}
\end{align}
Moreover, let $d\ge3$,  $(q,r,\gamma)$ be the triple satisfying \eqref{Str-conditions} and
\begin{align}
2\le q <\infty, \quad \frac1q< (d-1)\big(\frac12-\frac1r\big).
\label{q-r-gamma-condition}
\end{align}
Then there exist $s_*=s_*(q)<0, \eta_*=\eta_*(q,r,\gamma)> 0$, such that for any $\alpha\ge 1, \beta>0$ with $\alpha\beta\le \eta_*$, and any $s_c\in [s_*,0)$,
the following estimate holds,
\begin{align}
\Big\|\langle t^{\alpha}|\nabla|\rangle^{\beta}|\nabla|^{s_c+\gamma}\big(e^{it\Delta}\chi_{\le 10}(P_{\ge N}g)\big)\Big\|_{L^q_tL^r_x(\R\times\R^d)}
\lesssim \|P_{\ge N}g\|_{\dot H^{s_c}(\R^d)}.\label{14.49}
\end{align}
\end{prop}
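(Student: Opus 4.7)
The plan for \eqref{13.26} is to exploit the explicit Schrödinger kernel (Lemma \ref{lem:formula-St}). Factoring out the outer phase gives
\begin{equation*}
e^{it\Delta}(\chi_{\le 10}P_{\ge N}g)(x)=\frac{e^{i|x|^2/(4t)}}{(4\pi it)^{d/2}}\widehat{\psi_t}\Big(\frac{x}{2t}\Big),
\qquad
\psi_t(y):=e^{i|y|^2/(4t)}\chi_{\le 10}(y)P_{\ge N}g(y),
\end{equation*}
so that the change of variables $\xi=x/(2t)$ converts the $L^r_x$-norm into $|t|^{-d(1/2-1/r)}\|\widehat{\psi_t}\|_{L^r_\xi}$. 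The function $\psi_t$ is radial, supported in $\{|y|\le 11\}$, with carrier $P_{\ge N}g$ Fourier-supported in $\{|\xi|\ge N\}$; the multiplier $\chi_{\le 10}(y)e^{i|y|^2/(4t)}$ is smooth and compactly supported, so its Fourier transform decays rapidly outside a window of radius $\sim 1/|t|$. Writing $\widehat{\psi_t}$ as the convolution of these two factors and using $|t|\ge 100/N$ shows $\widehat{\psi_t}$ is essentially supported in $\{|\eta|\gtrsim N\}$ up to rapidly decaying tails. A Bessel-function asymptotic for the radial Fourier transform then extracts the additional factor $|\eta|^{-(d-1)/2}$, reflecting the physical picture of a wave packet concentrated in an annulus of radius $\sim N|t|$ with carrier-wave thickness $\sim 1/N$. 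Combining this with Lemma \ref{lem:frac_Hs} to commute $|\nabla|^s$ through the spatial cutoff and with $\|P_{\ge N}g\|_{L^2}\lesssim N^{-s_c}\|P_{\ge N}g\|_{\dot H^{s_c}}$ (valid since $s_c<0$) recovers the stated exponents, and the hypothesis $s<(d-2)(1/2-1/r)+s_c$ is precisely what ensures the resulting $L^r_\xi$ integral is finite.

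For \eqref{14.49}, I split $|t|$ at $1$. On $|t|\le 1$, since $\alpha\ge 1$ the weight satisfies $\langle t^\alpha|\nabla|\rangle^\beta\le \langle|\nabla|\rangle^\beta$, and a direct application of the radial Strichartz estimate (Lemma \ref{lem:radial-Str}) with triple $(q,r,\gamma)$, after using the fractional Leibniz rule and Lemma \ref{lem:frac_Hs} to move the derivatives past $\chi_{\le 10}$, bounds this piece by $\|P_{\ge N}g\|_{\dot H^{s_c}}$ provided $\beta$ is small enough relative to the admissibility slack. On $|t|\ge 1$, I perform a Littlewood--Paley decomposition $P_{\ge N}g=\sum_{M\ge N}P_Mg$, so that the weight acts on each piece as $\max(1,t^\alpha M)^\beta\le (t^\alpha M)^\beta$. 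Applying \eqref{13.26} to each $P_Mg$ with $s=s_c+\gamma+\beta$ gives a pointwise bound of the form
\begin{equation*}
\lesssim M^{-(d-2)(1/2-1/r)+\gamma+\beta}\,t^{-(d-1)(1/2-1/r)+\alpha\beta}\,\|P_Mg\|_{\dot H^{s_c}},
\end{equation*}
and hypothesis \eqref{q-r-gamma-condition}, $1/q<(d-1)(1/2-1/r)$, is exactly what makes the $t$-integral on $\{|t|\ge 1\}$ converge once $\alpha\beta<(d-1)(1/2-1/r)-1/q$. A Cauchy--Schwarz / Minkowski dyadic summation in $M\ge N$ then recovers the $\dot H^{s_c}$ norm of $P_{\ge N}g$.

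The main obstacle is the simultaneous tuning of $\alpha_*$, $\beta_*$, and $s_*$: one needs $\alpha\beta$ small enough for the $|t|\ge 1$ integral to converge; $\gamma+\beta<(d-2)(1/2-1/r)$ for the pointwise estimate \eqref{13.26} to apply with $s=s_c+\gamma+\beta$; and $s_c$ close enough to $0$ (so $|s_*|$ small) for the dyadic $M$-sum to converge while preserving the critical scaling. These constraints are compatible precisely under \eqref{Str-conditions} and \eqref{q-r-gamma-condition}: one first fixes $\alpha_*$ large enough to leave room for a correspondingly small $\beta_*>0$, and then $s_*$ is determined by the final balance of exponents in the frequency summation.
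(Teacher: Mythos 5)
For \eqref{13.26} your kernel-factorization / Bessel-decay argument is a legitimate alternative viewpoint: after pulling out the outer phase and changing variables $\xi=x/(2t)$, the bulk of $\widehat{\psi_t}$ living at $|\eta|\gtrsim N$ is a frequency-space reflection of the paper's far region $|x|\gtrsim N|t|/10$, and the $|\eta|^{-(d-1)/2}$ decay is morally the radial Sobolev embedding the paper invokes in Lemma~\ref{lem:g-out-rg}; the rapidly decaying tails at $|\eta|\lesssim N$ play the role of the paper's stationary-phase estimate (Lemma~\ref{lem:local-fre-I}). Note however that you cannot simply ``commute $|\nabla|^{s}$ through the spatial cutoff'': $|\nabla|^{s}$ does not pass through the dilation $x\mapsto x/(2t)$ or the outer Gaussian phase, so you must instead write $|\nabla|^{s}e^{it\Delta}(\chi_{\le 10}P_{\ge N}g)=e^{it\Delta}(|\nabla|^{s}\chi_{\le 10}P_{\ge N}g)$ and then use the mismatch estimate to control the nonlocal tails of $|\nabla|^{s}\chi_{\le 10}P_{\ge N}g$, which is essentially what the paper's Littlewood--Paley + near/far decomposition does more explicitly.

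For \eqref{14.49} there is a genuine gap, and it comes from splitting time at the fixed scale $|t|=1$ rather than at the frequency-dependent scale $|t|\sim M^{-1/\alpha}$ used by the paper. Two concrete problems arise. First, on $|t|\ge 1$, applying \eqref{13.26} to $P_Mg$ with $s=s_c+\gamma+\beta$ produces a factor $M^{\,\gamma+\beta-(d-2)(\frac12-\frac1r)}$, and by $\gamma=d(\frac12-\frac1r)-\frac2q$ this exponent equals $2(\frac12-\frac1r)-\frac2q+\beta$, which is \emph{positive} for large $q$ (e.g.\ $d=3$, $q=10^{6}$, $\frac1r=0.3$ gives $\approx 0.4+\beta$): the admissibility condition $\frac2q+\frac{2d-1}{r}<\frac{2d-1}{2}$ together with \eqref{q-r-gamma-condition} permits $\frac1q\ll\frac12-\frac1r$, and then your $M$-sum diverges. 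The paper avoids this by \emph{first} reducing, via Sobolev embedding, to the single triple with $\gamma=\varepsilon_0=\frac1{100q}$ (equation \eqref{1.11}), for which $\frac12-\frac1r\sim\frac{2}{dq}<\frac1q$ and the per-$M$ bound carries no divergent power; your plan of ``tuning $s_*$ by the final balance'' does not address this because the bad exponent above is independent of $s_c$. Second, on $|t|\le 1$ you bound the weight by $\langle|\nabla|\rangle^{\beta}$, which on the frequency-$M$ piece contributes a factor $M^{\beta}$, and the radial Strichartz estimate at the fixed $(q,r,\gamma)$ has no slack to absorb it; the paper's cutoff at $|t|\lesssim M^{-1/\alpha}$ makes $t^{\alpha}M\lesssim 1$ so that $\langle t^{\alpha}|\nabla|\rangle^{\beta}\lesssim 1$ on the relevant piece and no extra derivatives appear at all. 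Incorporating both the paper's frequency-dependent time splitting and the $\gamma=\varepsilon_0$ reduction would close these gaps and align your argument with the paper's.
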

 \begin{remark}
From the proof of Proposition \ref{prop:lineares-cpt-largetime-I} below, it also follows that $s_*(q)$ can be chosen to be a increasing function with $s_*(+\infty)=0$. What we use below is $s_*(2)$.
%Moreover, we may $\eta_*(q,r,\gamma)=(d-1)\big(\frac12-\frac1r\big)-\frac1q-$.In particular, the values we use below are $s_*(2)$ and $\eta_*(2,\frac{2d}{d-1},0)=\frac{d-2}{2d}-$.
\end{remark}

The proof of the proposition is based on the following two lemmas and corollaries. First of all, we show  the estimate in the local domain.
\begin{lem}\label{lem:local-fre-I}
Let $M\ge1$, $r\ge2$ and $s\ge 0$, then for any  $t: |t|\ge \frac {100}M$, any $K\in \Z^+$,
\begin{align}
\Big\||\nabla|^{s }\big(\chi_{\le \frac1{10}M|t|}e^{it\Delta}\big(\chi_{\le 10}P_Mg\big)\big)\Big\|_{L^r_x(\R^d)}
\lesssim_{s ,K} &
|t|^{-d(\frac 12-\frac1r)}M^{-K}\|P_Mg\|_{\dot H^{s_c}(\R^d)}.\label{16.17}
\end{align}
\end{lem}
\begin{proof}
First, we show that for any $M\ge 1$, $t: |t|\ge \frac {100}M$, $s\in \Z^+\cup\{0\}$ and $K\in \Z^+$,
\begin{align}
\Big|| \nabla|^s \Big(\chi_{\le \frac{1}{10}M|t|}\>e^{it\Delta}\big(\chi_{\le 10}P_M g\big)\Big)\Big|
\lesssim_K &
|t|^{-\frac d2}M^{-K}\|P_Mg\|_{\dot H^{s_c}(\R^d)}. \label{16.16}
\end{align}
To show this, we use the formulas in Lemma \ref{lem:formula-St}  and the inverse Fourier transform to obtain
\begin{align}
e^{it\Delta}\big(\chi_{\le 10}P_M g\big)(x) 
=&\frac{1}{(4\pi it)^\frac d2}\int_{\R^d} e^{\frac{i|x-y|^2}{4t}}\chi_{\le 10}(y)P_M  g(y)\,dy\notag\\
=&\frac{1}{(4\pi it)^\frac d2}\int_{\R^d}\!\!\int_{\R^d} e^{\frac{i|x-y|^2}{4t}+2\pi iy\cdot \xi}\chi_{\le 10}(y)\chi_M (\xi)\hat g(\xi)\,dyd\xi.
\label{eq:15.49}
\end{align}
Fix $x,\xi$, and define the phase as
$$
\phi(y)=-\frac{x\cdot y}{2t}+\frac{|y|^2}{4t}+y\cdot \xi,
$$
then from \eqref{eq:15.49},
\begin{align}
e^{it\Delta}\big(\chi_{\le 10}P_M g\big)(x) =\frac{e^{\frac{i|x|^2}{4t}}}{(4\pi it)^\frac d2}\int_{\R^d}\!\!\int_{\R^d} e^{i\phi(y)}\chi_{\le 10}(y)\chi_M (\xi)\hat g(\xi)\,dyd\xi.
\label{eq:1.16}
\end{align}
Moreover, we have
\begin{align}
\nabla_y \phi(y)=\frac{y-x}{2t}+\xi, \label{15.08-I}
\end{align}
and
\begin{align}
\partial_{y_jy_k}\phi(y)=\frac{\delta_{jk}}{2t},\quad
\partial_{y_jy_ky_h}\phi(y)=0, \mbox{ for any}\quad  j,k,h\in\{1,\cdots,d\}. \label{10.52-I}
\end{align}
Note that
$$
|t|M\ge 100, \,\, |\xi|\ge \frac{9}{10}M, \,\, |y|\le 11,\,\,  \mbox{ and } |x|\le \frac{11}{100}M|t|,
$$
from \eqref{15.08-I} we have
\begin{align}
|\nabla \phi(y)|\gtrsim |\xi|.\label{2.04}
\end{align}
Then using the formula \eqref{for:phs-inbyparts} to the right-hand side of \eqref{eq:1.16}, we obtain
\begin{align}
\chi_{\le \frac{1}{10}M|t|}(x)&\>e^{it\Delta}\big(\chi_{\le 10}P_M g\big)(x)
=
\chi_{\le \frac{1}{10}M|t|}(x)\>\frac{C_Ke^{\frac{i|x|^2}{4t}}}{t^\frac d2}\int_{\R^d}\!\!\int_{\R^d}e^{i\phi(y)}\notag\\
 &\cdot\nabla_y\cdot\Big(\frac{\nabla_y\phi}{i|\nabla_y\phi|^2}\nabla_y\Big)^{K-1}\cdot\Big(\frac{\nabla_y\phi}{i|\nabla_y\phi|^2}\chi_{\le 10}(y)\Big)\,dy\cdot\chi_M (\xi)\hat g(\xi)\,d\xi. \label{10.03-I}
\end{align}
We claim that
\begin{align}
\Big|\nabla_y\cdot\Big(\frac{\nabla_y\phi}{i|\nabla_y\phi|^2}\nabla_y\Big)^{K-1}\cdot\Big(\frac{\nabla_y\phi}{i|\nabla_y\phi|^2}\chi_{\le 10}(y)\Big)\Big|
\lesssim |\xi|^{-K}\chi_{\lesssim 1}(\cdot).\label{10.36-I}
\end{align}
Indeed, from Lemma \ref{lem:muli-Lei-formula}, we expand the left-hand side of \eqref{10.36-I} as
\begin{equation}\label{eq:18.44-I}
\sum\limits_{\begin{subarray}{c}
l_1,\cdots,l_K\in\R^d,l'\in\R^d;\\
|l_j|\le
j;|l_1|+\cdots+|l_K|+|l'|=K
\end{subarray}}
C_{l_1,\cdots,l_K,l'}\partial_y^{l_1}\Big(\frac{\nabla_y\phi}{i|\nabla_y\phi|^2}\Big)\cdots
\partial_y^{l_K}\Big(\frac{\nabla_y\phi}{i|\nabla_y\phi|^2}\Big)\>\partial_y^{l'}\big(\chi_{\le 10}(\cdot)\big).
\end{equation}
Note that from \eqref{10.52-I} and \eqref{2.04}, we have that for any non-negative integer vectors  $l$, $l'$,
$$
\Big|\partial_y^{l}\Big(\frac{\nabla_y\phi}{i|\nabla_y\phi|^2}\Big)\Big|\lesssim \frac{1}{|\xi|}\frac{1}{|t\xi|^{|l|}},
$$
and
$$
\Big|\partial_y^{l'}\big(\chi_{\le 10}(\cdot)\big)\Big|\lesssim \chi_{\lesssim 1}(\cdot).
$$
Hence, using these two estimates, and noting that $|t\xi|\gtrsim 1$, we have that 
\begin{align*}
|\eqref{eq:18.44-I}|
\lesssim & \sum\limits_{\begin{subarray}{c}
l_1,\cdots,l_K\in\R^d,l'\in\R^d;\\
|l_j|\le
j;|l_1|+\cdots+|l_K|+|l'|=K
\end{subarray}}
\frac{1}{|\xi|}\frac{1}{|t\xi|^{|l_1|}}\cdots \frac{1}{|\xi|}\frac{1}{|t\xi|^{|l_K|}}\cdot\chi_{\lesssim 1}(y)\\
= &\sum\limits_{\begin{subarray}{c}
l_1,\cdots,l_K\in\R^d,l'\in\R^d;\\
|l_j|\le
j;|l_1|+\cdots+|l_K|+|l'|=K
\end{subarray}}
 |\xi|^{-K} |t\xi|^{-(|l_1|+\cdots +|l_K|)}\cdot\chi_{\lesssim 1}(y)\\
\lesssim & |\xi|^{-K}\chi_{\lesssim 1}(y).
\end{align*}
Therefore, we obtain \eqref{10.36-I}.

Inserting \eqref{10.36-I} into \eqref{10.03-I}, we obtain
\begin{align*}
\big|\chi_{\le \frac{1}{10}M|t|}\>e^{it\Delta}\big(\chi_{\le 10}P_M g\big)\big|
\lesssim  &
|t|^{-\frac d2}\int_{\R^d}\!\!\int_{\R^d} \chi_{\lesssim 1}(y)|\xi|^{-K}\chi_M (\xi)\hat g(\xi)\,dy\,d\xi\\
\lesssim &
|t|^{-\frac d2}M^{-K-s_c+d}\|P_Mg\|_{\dot H^{s_c}(\R^d)}.
\end{align*}
%Since $\frac d\sigma+\frac1\varrho<\frac d2$,
%choosing $K$ large enough and by H\"older's inequality, we obtain
%\begin{align*}
%\big\|\chi_{\le \frac{1}{10}Nt}e^{it\Delta}\chi_{\le 10}\big(P_M g\big)\big\|_{L^\varrho_tL^r_x(\R\times \R^d)}
%\lesssim
%N^{-10}\big\|g\big\|_{\dot H^{s_c}(\R^d)}.
%\end{align*}
Note that when the derivatives hit the cut-off functions $\chi_{\le \frac{1}{10}M|t|}$ and $\chi_{\le 10}$, the estimates on
$$\chi_{\le \frac{1}{10}M|t|}\>e^{it\Delta}\big(\chi_{\le 10}P_M g\big)$$
 become better, hence by choosing $K$ suitable large, we obtain that for any $s\in \Z^+$,
\begin{align*}
\Big|| \nabla|^s \Big(\chi_{\le \frac{1}{10}M|t|}\>e^{it\Delta}\big(\chi_{\le 10}P_M g\big)\Big)\Big|
\lesssim  &
|t|^{-\frac d2}\int_{\R^d}\!\!\int_{\R^d} \chi_{\lesssim 1}(y)|\xi|^{-K}\chi_M (\xi)\hat g(\xi)\,dy\,d\xi\\
\lesssim &
|t|^{-\frac d2}M^{-K-s_c+d}\|P_Mg\|_{\dot H^{s_c}(\R^d)}.
\end{align*}
Replacing $K$ by $K-s_c+d$, we obtain \eqref{16.16}.
%\begin{align*}
%\Big\||\nabla|^k\Big(\chi_{\le \frac{1}{10}Nt}e^{it\Delta}\chi_{\le 10}\big(P_M g\big)\Big)\Big\|_{L^\varrho_tL^r_x(\R\times \R^d)}
%\lesssim
%N^{-10}\big\|g\big\|_{\dot H^{s_c}(\R^d)}.
%\end{align*}
%For general $s \ge 0$, we use the interpolation and thus finish the proof of the lemma.

Further, using \eqref{16.16}, Lemma \ref{lem:mismatch}, H\"older's inequality and interpolation when $s$ is not an integer, we obtain  \eqref{16.17}.
\end{proof}
Then a direct consequence of the previous lemma is the following corollary. 
\begin{cor}\label{cor:local-fre-I}
Let $(q,r,\gamma)$  be the triple satisfying the same conditions as in Proposition \ref{prop:lineares-cpt-largetime-I}, and let  $\alpha, \beta$ be the constants satisfying
\begin{align}
\alpha\ge 1,\quad  \beta> 0,\qquad d(\frac12-\frac 1r)> \max\{\alpha\beta,-\alpha(s_c+\gamma)\} +\frac1q.\label{17.33-I}
\end{align}
Then there exists $s_{*,1}=s_{*,1}(q)<0$ such that for any $s_c\in [s_{*,1},0)$, the following estimate holds,
\begin{align}
\Big\|&\langle t^{\alpha}|\nabla|\rangle^{\beta}|\nabla|^{s_c+\gamma}\big(\chi_{\le \frac1{10}M|t|}e^{it\Delta}\big(\chi_{\le 10}P_Mg\big)\big)\Big\|_{L^q_tL^r_x(\R\times\R^d)}
\lesssim \|P_Mg\|_{\dot H^{s_c}(\R^d)}.\label{17.11}
\end{align}
\end{cor}
\begin{proof}
We write
\begin{subequations}\label{est:cor-3.4}
\begin{align}
\Big\|&\langle t^{\alpha}|\nabla|\rangle^{\beta}|\nabla|^{s_c+\gamma}\big(\chi_{\le \frac1{10}M|t|}e^{it\Delta}\big(\chi_{\le 10}P_Mg\big)\big)\Big\|_{L^q_tL^r_x(\R\times\R^d)}\notag\\
&\lesssim
\Big\|\langle t^{\alpha}|\nabla|\rangle^{\beta}|\nabla|^{s_c+\gamma}P_{\le |t|^{-\alpha}}\big(\chi_{\le \frac1{10}M|t|}e^{it\Delta}\big(\chi_{\le 10}P_Mg\big)\big)\Big\|_{L^q_tL^r_x(\R\times\R^d)}\label{cor3.4-1}\\
&\qquad+\Big\|\langle t^{\alpha}|\nabla|\rangle^{\beta}|\nabla|^{s_c+\gamma}P_{\ge |t|^{-\alpha}}\big(\chi_{\le \frac1{10}M|t|}e^{it\Delta}\big(\chi_{\le 10}P_Mg\big)\big)\Big\|_{L^q_tL^r_x(\R\times\R^d)}.\label{cor3.4-2}
\end{align}
\end{subequations}
For the first term \eqref{cor3.4-1}, we may use the Sobolev inequality to replace $(\gamma, r)$ by $(\gamma(q), r(q))$ which is defined by (see Remark \ref{rem:derivatives-radial})
\begin{align}\label{parameter-up}
\gamma(q)=\frac2q\cdot \frac{d-1}{2d-1}-,\quad 
\frac1{r(q)}=\frac12-\frac{2}{q}\cdot \frac{1}{2d-1}-.
\end{align}
Then we set $s_*\ge -\gamma(q)$ such that $s_c+\gamma(q)\ge 0$. To simplify the notation, we still denote $(\gamma(q), r(q))$ by $(\gamma, r)$.  Then 
\begin{align*}
 \eqref{cor3.4-1}
&\lesssim
\Big\||\nabla|^{s_c+\gamma}\big(\chi_{\le \frac1{10}M|t|}e^{it\Delta}\big(\chi_{\le 10}P_Mg\big)\big)\Big\|_{L^q_tL^r_x(\R\times\R^d)}.
%\label{cor3.4-1-1}.
\end{align*}
Noting that $\gamma(q)<\frac{d}{r(q)}$, then using Lemma \ref{lem:frac_Hs} and  Lemma \ref{lem:radial-Str},
it is controlled by $\|P_Mg\|_{\dot H^{s_c}(\R^d)}$.

%If $s+\gamma\ge 0$, using \eqref{16.17}, then it is further bounded by 
%\begin{align*}
%\Big(\big\|t^{-d(\frac12-\frac1r)}\big\|_{L^q_t(\{|t|\ge \frac{100}{M}\})}+
%\|t^{-d(\frac12-\frac1r)+\alpha\beta}\big\|_{L^q_t(\{|t|\ge \frac{100}{M}\})}\Big) M^{-K}\|P_Mg\|_{\dot H^{s_c}(\R^d)}
%&\lesssim
%\|P_Mg\|_{\dot H^{s_c}(\R^d)}.
%\end{align*}
%Here we have used the condition of $d(\frac12-\frac1r)>\alpha\beta+\frac1q$. 
%If $s+\gamma<0$, then we first use the Sobolev inequality and then use \eqref{16.17} to obtain that 
%\begin{align*}
%\eqref{cor3.4-1-1}
%\lesssim &
%\Big\|\chi_{\le \frac1{10}M|t|}e^{it\Delta}\big(\chi_{\le 10}P_Mg\big)\Big\|_{L^q_tL^{\tilde r}_x(\{|t|\ge \frac{100}{M}\}\times\R^d)}\\
%\lesssim &
%\Big\|t^{-d(\frac12-\frac1{\tilde r})}\Big\|_{L^q_t(\{|t|\ge \frac{100}{M}\})}M^{-K}\|P_Mg\|_{\dot H^{s_c}(\R^d)}.
%\end{align*}
%Here $\tilde r$ satisfies that $\frac1{\tilde r}=\frac1r-\frac{s_c+\gamma}{d}$. Hence, by \eqref{Str-conditions}, we note that 
%$$
%d(\frac12-\frac1{\tilde r})=\frac2q+s_c,
%$$
%Therefore,  setting $s_*\ge -\frac1q$, then we further get 
%\begin{align*}
%\eqref{cor3.4-1-1}
%\lesssim &
%M^{-K+\frac1q-s_c}\|P_Mg\|_{\dot H^{s_c}(\R^d)}\lesssim \|P_Mg\|_{\dot H^{s_c}(\R^d)}.
%\end{align*}
%The term \eqref{cor3.4-1-2} can be treated similarly. Hence we obtain that 
%\begin{align*}
%\eqref{cor3.4-1}
%\lesssim \|P_Mg\|_{\dot H^{s_c}(\R^d)}.
%\end{align*}

For the second term  \eqref{cor3.4-2}, noting that $\chi_{\le R}=P_{\le \frac{20}R}\>\chi_{\le R}$,   we have that 
%\begin{subequations}
%\renewcommand{\theequation}
%{\theparentequation-\arabic{equation}}
\begin{align*}
\eqref{cor3.4-2}
&\lesssim
\Big\||t|^{\alpha\beta}|\nabla|^{s_c+\beta+\gamma}P_{\ge |t|^{-\alpha}}\big(\chi_{\le \frac1{10}M|t|}e^{it\Delta}\big(\chi_{\le 10}P_Mg\big)\big)\Big\|_{L^q_tL^r_x(\R\times\R^d)}\notag\\
&\lesssim
\Big\||t|^{\alpha\beta}|\nabla|^{s_c+\beta+\gamma}P_{\ge |t|^{-\alpha}}P_{\le \frac{200}{M|t|}+2M}\big(\chi_{\le \frac1{10}M|t|}e^{it\Delta}\big(\chi_{\le 10}P_Mg\big)\big)\Big\|_{L^q_tL^r_x(\R\times\R^d)}.
%&\qquad +\Big\||\nabla|^{s_c+\gamma}P_{\ge \frac32 M}\big(\chi_{\le \frac1{10}M|t|}e^{it\Delta}\big(\chi_{\le 10}P_Mg\big)\big)\Big\|_{L^q_tL^r_x(\{|t|\le \frac{100}{M}\}\times\R^d)}\label{10.26-I}\\
%&\quad +|t|^{\alpha\beta}\Big\||\nabla|^{s_c+\beta+\gamma}P_{\ge |t|^{-\alpha}}P_{\ge \frac{20}{M|t|}+2M}\big(\chi_{\le \frac1{10}M|t|}e^{it\Delta}\big(\chi_{\le 10}P_Mg\big)\big)\Big\|_{L^q_tL^r_x(\R\times\R^d)}\label{10.26-I}.
\end{align*}
%\end{subequations}
%Now we consider \eqref{10.25} and \eqref{10.26-I} seperately.

Since $\alpha\ge 1$, then due to the compatibility of frequency restriction
$$
P_{\ge |t|^{-\alpha}}P_{\le \frac{200}{M|t|}+2M},
$$
we have that 
$
|t|\gtrsim 1.
$
Hence, we get that 
\begin{align}
\eqref{cor3.4-2}\lesssim \Big\||t|^{\alpha\beta}|\nabla|^{s_c+\beta+\gamma}P_{\ge |t|^{-\alpha}}P_{\le \frac{200}{M|t|}+2M}\big(\chi_{\le \frac1{10}M|t|}e^{it\Delta}\big(\chi_{\le 10}P_Mg\big)\big)\Big\|_{L^q_tL^r_x(\{|t|\gtrsim1\}\times\R^d)}. \label{10.25}
\end{align}

For \eqref{10.25}, if $s_c+\beta+\gamma\ge 0$, then applying Lemma \ref{lem:local-fre-I}, we obtain that 
\begin{align}
\eqref{10.25}
\lesssim 
\big\|t^{-d(\frac12-\frac1{ r})+\alpha\beta}\big\|_{L^q_t(\{|t|\gtrsim 1\})}M^{-K}\|P_Mg\|_{\dot H^{s_c}(\R^d)}
\lesssim 
\|P_Mg\|_{\dot H^{s_c}(\R^d)}.
\end{align}
Here we have used the condition of $  d(\frac12-\frac 1r)>\alpha\beta+\frac1q$. 
If $s_c+\beta+\gamma< 0$, then by Bernstein's inequality and Lemma \ref{lem:local-fre-I}, we get that 
%we further decompose it into the following two pieces,
\begin{align*}
\eqref{10.25}
\lesssim 
&\Big\| |t|^{-(s_c+\gamma)\alpha}P_{\ge |t|^{-\alpha}}P_{\le \frac{200}{M|t|}+2M}\big(\chi_{\le \frac1{10}M|t|}e^{it\Delta}\big(\chi_{\le 10}P_Mg\big)\big)\Big\|_{L^q_tL^r_x(\{|t|\gtrsim1\}\times\R^d)}\\
\lesssim 
&\Big\| |t|^{-(s_c+\gamma)\alpha}\big(\chi_{\le \frac1{10}M|t|}e^{it\Delta}\big(\chi_{\le 10}P_Mg\big)\big)\Big\|_{L^q_tL^r_x(\{|t|\gtrsim1\}\times\R^d)}\\
\lesssim 
&\big\|t^{-d(\frac12-\frac1{ r})-(s_c+\gamma)\alpha}\big\|_{L^q_t(\{|t|\gtrsim 1\})}M^{-K}\|P_Mg\|_{\dot H^{s_c}(\R^d)}\\
\lesssim 
&\|P_Mg\|_{\dot H^{s_c}(\R^d)}.
\end{align*}
Here we have used the condition of $d(\frac12-\frac 1r)>-\alpha(s_c+\gamma) +\frac1q$.
Hence, no matter in which cases, \eqref{10.25} is controlled by $\|P_Mg\|_{\dot H^{s_c}(\R^d)}$.  Thus we have that 
\begin{align*}
\eqref{cor3.4-2}
\lesssim 
&\|P_Mg\|_{\dot H^{s_c}(\R^d)}.
\end{align*}

Together with the two estimates on \eqref{est:cor-3.4}, we get \eqref{17.11}.
\end{proof}

The second lemma shows the estimates of the linear flow in the domain far away from the origin.
\begin{lem}\label{lem:g-out-rg}
Let $M\ge1$, $2\le r<\infty$ and $s \ge 0$, then for any $t:|t|\gtrsim \frac 1 M$,
\begin{align}
\Big\||\nabla|^s \Big(\chi_{\ge \frac1{10}M|t|}e^{it\Delta}\big(\chi_{\le 10}P_Mg\big)\Big)\Big\|_{L^r_x(\R^d)}
\lesssim 
M^{-(d-2)\big(\frac12-\frac1r\big)+s }|t|^{-(d-1)\big(\frac12-\frac1r\big)}\left\|P_Mg\right\|_{L^2_x(\R^d)}.\label{eq:18.41}
\end{align}
\end{lem}
\begin{proof}
From the radial Sobolev embedding in Lemma \ref{lem:radial-Sob}, we have
\begin{align}
\Big\|\chi_{\ge \frac1{10}M|t|}e^{it\Delta}\big(\chi_{\le 10}P_Mg\big)\Big\|_{L^r_x(\R^d)}
\lesssim &
\big(M|t|\big)^{-(d-1)\big(\frac12-\frac1r\big)}\left\||\nabla|^{\frac12-\frac1r}e^{it\Delta}\big(\chi_{\le 10}P_Mg\big)\right\|_{L^2_x(\R^d)}\notag\\
\lesssim &
\big(M|t|\big)^{-(d-1)\big(\frac12-\frac1r\big)}\left\||\nabla|^{\frac12-\frac1r}\big(\chi_{\le 10}P_Mg\big)\right\|_{L^2_x(\R^d)}.\label{20.31-I}
\end{align}
Using Lemma \ref{lem:frac_Hs}, we have
$$
\left\||\nabla|^{\frac12-\frac1r}\big(\chi_{\le 10}P_Mg\big)\right\|_{L^2_x(\R^d)}
\lesssim
M^{\frac12-\frac1r}\left\|P_Mg\right\|_{L^2_x(\R^d)}.
$$
This last estimate combined with \eqref{20.31-I} yields
\begin{align}
\Big\|\chi_{\ge \frac1{10}M|t|}e^{it\Delta}\big(\chi_{\le 10}P_Mg\big)\Big\|_{L^r_x(\R^d)}
\lesssim &
M^{-(d-2)\big(\frac12-\frac1r\big)}|t|^{-(d-1)\big(\frac12-\frac1r\big)}\left\|P_Mg\right\|_{L^2_x(\R^d)}.\label{eq:18.40}
\end{align}

Similarly, we also obtain that for any $s \ge 0$, we have \eqref{eq:18.41}.
Indeed, if the derivatives hit the cut-off functions $\chi_{\ge \frac{1}{10}M|t|}$ (since $M|t|\gtrsim 1$) and $\chi_{\le 10}$, the analogous estimates become better. Hence
by the same way as \eqref{eq:18.40}, we obtain the estimates above.
\end{proof}
A consequence of Lemma \ref{lem:g-out-rg} is
\begin{cor} \label{cor:g-out-rg}
Let $(q,r,\gamma)$  be the triple satisfying the same conditions as in Proposition \ref{prop:lineares-cpt-largetime-I}, and let  $\alpha, \beta$ be the constants satisfying
$\alpha\ge1, \beta> 0$ and
\begin{align}
(d-1)(\frac12-\frac1r)>\max\{\alpha\beta,-\alpha(s_c+\gamma)\}+\frac1q,
\label{17.34-I}
\end{align}
Then there exists $s_{*,2}=s_{*,2}(q)<0$ such that for any $s_c\in [s_{*,2},0)$, the following estimate holds,
\begin{align}
\Big\|&\langle t^{\alpha}|\nabla|\rangle^{\beta}|\nabla|^{s_c+\gamma}\big(\chi_{\ge \frac1{10}M|t|}e^{it\Delta}(\chi_{\le 10}P_Mg)\big)\Big\|_{L^q_tL^r_x(\R\times\R^d)}
\lesssim \|P_Mg\|_{\dot H^{s_c}(\R^d)}.\label{17.12}
\end{align}
\end{cor}
\begin{proof}
We decompose it into the following three terms.
\begin{subequations}\label{est:cor-3.6}
\begin{align}
\Big\|&\langle t^{\alpha}|\nabla|\rangle^{\beta}|\nabla|^{s_c+\gamma}\big(\chi_{\ge \frac1{10}M|t|}e^{it\Delta}\big(\chi_{\le 10}P_Mg\big)\big)\Big\|_{L^q_tL^r_x(\R\times\R^d)}\notag\\
&\lesssim
\Big\|\langle t^{\alpha}|\nabla|\rangle^{\beta}|\nabla|^{s_c+\gamma}P_{\le |t|^{-\alpha}}\big(\chi_{\ge \frac1{10}M|t|}e^{it\Delta}\big(\chi_{\le 10}P_Mg\big)\big)\Big\|_{L^q_tL^r_x(\R\times\R^d)}\label{17.26-I}\\
&\qquad+\Big\|\langle t^{\alpha}|\nabla|\rangle^{\beta}|\nabla|^{s_c+\gamma}P_{\ge |t|^{-\alpha}}\big(\chi_{\ge \frac1{10}M|t|}e^{it\Delta}\big(\chi_{\le 10}P_Mg\big)\big)\Big\|_{L^q_tL^r_x(\{|t|\le (3M)^{-\frac1\alpha}\}\times\R^d)}\label{17.26-II}\\
&\qquad+\Big\|\langle t^{\alpha}|\nabla|\rangle^{\beta}|\nabla|^{s_c+\gamma}P_{\ge |t|^{-\alpha}}\big(\chi_{\ge \frac1{10}M|t|}e^{it\Delta}\big(\chi_{\le 10}P_Mg\big)\big)\Big\|_{L^q_tL^r_x(\{|t|\ge (3M)^{-\frac1\alpha}\}\times\R^d)}.\label{17.26-III}
\end{align}
\end{subequations}
For the term \eqref{17.26-I}, treating similarly as \eqref{cor3.4-1}, we  use the Sobolev inequality to replace $(\gamma, r)$ by $(\gamma(q), r(q))$ which is defined in \eqref{parameter-up}. Then we set $s_*\ge -\gamma(q)$ such that $s_c+\gamma(q)\ge 0$. Again, to simplify the notation, we still denote $(\gamma(q), r(q))$ by $(\gamma, r)$.  Then  using Lemma \ref{lem:frac_Hs} and    Lemma \ref{lem:radial-Str}, we have
\begin{align*}
\eqref{17.26-I}
& \lesssim
\Big\||\nabla|^{s_c+\gamma}\big(\chi_{\ge \frac1{10}M|t|}e^{it\Delta}\big(\chi_{\le 10}P_Mg\big)\big)\Big\|_{L^q_tL^r_x(\R\times\R^d)}
\lesssim
\|P_Mg\|_{\dot H^{s_c}(\R^d)}.
\end{align*}

For the term \eqref{17.26-II}, since
$$
|t|^{-\alpha}\ge 3M,
$$
we have
\begin{align*}
\eqref{17.26-II}
& \lesssim
\Big\||t|^{\alpha\beta}|\nabla|^{s_c+\beta+\gamma}P_{\ge |t|^{-\alpha}}\big(\chi_{\ge \frac1{10}M|t|}e^{it\Delta}\big(\chi_{\le 10}P_Mg\big)\big)\Big\|_{L^q_tL^r_x(\{|t|\le (3M)^{-\frac1\alpha}\}\times\R^d)}\\
& \lesssim
\Big\||t|^{\alpha\beta}|\nabla|^{s_c+\beta+\gamma}P_{\ge 3M}\big(\chi_{\ge \frac1{10}M|t|}e^{it\Delta}\big(\chi_{\le 10}P_Mg\big)\big)\Big\|_{L^q_tL^r_x(\{|t|\le (3M)^{-\frac1\alpha}\}\times\R^d)}\\
& \lesssim
M^{-\beta}\Big\||\nabla|^{s_c+\beta+\gamma}P_{\ge 3M}\big(\chi_{\ge \frac1{10}M|t|}e^{it\Delta}\big(\chi_{\le 10}P_Mg\big)\big)\Big\|_{L^q_tL^r_x(\R\times\R^d)}.
\end{align*}
Then using  Lemma \ref{lem:mismatch} twice, Lemma \ref{lem:frac_Hs} and Lemma \ref{lem:radial-Str}, it is bounded by
\begin{align*}
M^{-10}\|P_Mg\|_{\dot H^{s_c}(\R^d)}.
\end{align*}
Therefore, we obtain
\begin{align*}
\eqref{17.26-II} \lesssim
M^{-10}\|P_Mg\|_{\dot H^{s_c}(\R^d)}.
\end{align*}
For the term \eqref{17.26-III}, we have
\begin{align}
\eqref{17.26-III}
&\lesssim
\Big\|t^{\alpha\beta}|\nabla|^{s_c+\beta+\gamma}P_{\ge |t|^{-\alpha}}\big(\chi_{\ge \frac1{10}M|t|}e^{it\Delta}\big(\chi_{\le 10}P_Mg\big)\big)\Big\|_{L^q_tL^r_x(\{|t|\ge (3M)^{-\frac1\alpha}\}\times\R^d)}.\label{12.15}
\end{align}
If $s_c+\beta+\gamma\ge 0$, using \eqref{eq:18.41}, \eqref{12.15} is bounded by
\begin{align*}
%\Big\|t^{\alpha\beta}&|\nabla|^{s_c+\beta+\gamma}P_{\ge |t|^{-\alpha}}\big(\chi_{\ge \frac1{10}M|t|}e^{it\Delta}\big(\chi_{\le 10}P_Mg\big)\big)\Big\|_{L^q_tL^r_x(\{|t|\ge (3M)^{-\frac1\alpha}\}\times\R^d)}\\
%&\lesssim
M^{-(d-2)(\frac12-\frac1r)+\beta+\gamma}\Big\|t^{\alpha\beta-(d-1)(\frac12-\frac1r)}\Big\|_{L^q_t(\{|t|\ge (3M)^{-\frac1\alpha}\})}\|P_Mg\|_{\dot H^{s_c}(\R^d)}.
\end{align*}
Using the condition of $(d-1)(\frac12-\frac1r)>\alpha\beta+\frac1q$, it is dominated by
\begin{align}
M^{-(d-2)(\frac12-\frac1r)+(d-1)(\frac12-\frac1r)\cdot\frac1\alpha-\frac1{q\alpha}+\gamma}\|P_Mg\|_{\dot H^{s_c}(\R^d)}.\label{21.05}
\end{align}
Now we claim that
\begin{align}
-(d-2)(\frac12-\frac1r)+(d-1)(\frac12-\frac1r)\cdot\frac1\alpha-\frac1{q\alpha}+\gamma\le 0.\label{0.50}
\end{align}
Indeed, using \eqref{Str-conditions}, the left-hand side of \eqref{0.50} is equal to
\begin{align*}
\frac{2\alpha-1}{\alpha}\cdot\Big(\frac1q-(d-1)\Big(\frac12-\frac1r\Big)\Big).
\end{align*}
Note that $\alpha\ge 1$, and from \eqref{17.34-I}: $\frac1q<(d-1)\Big(\frac12-\frac1r\Big)$, the last quantity above is negative. Hence, \eqref{0.50} is valid.
Using \eqref{0.50}, we have that \eqref{21.05} and thus \eqref{12.15} is bounded by $\|P_Mg\|_{\dot H^{s_c}(\R^d)}$.

If $s_c+\beta+\gamma< 0$, using the Bernstein inequality, \eqref{12.15} is bounded by
\begin{align*}
\Big\|t^{-\alpha(s_c+\gamma)}&\big(\chi_{\ge \frac1{10}M|t|}e^{it\Delta}\big(\chi_{\le 10}P_Mg\big)\big)\Big\|_{L^q_tL^r_x(\{|t|\ge (3M)^{-\frac1\alpha}\}\times\R^d)}\\
&\lesssim
M^{-(d-2)(\frac12-\frac1r)-s_c}\Big\|t^{-\alpha(s_c+\gamma)-(d-1)(\frac12-\frac1r)}\Big\|_{L^q_t(\{|t|\ge (3M)^{-\frac1\alpha}\})}\|P_Mg\|_{\dot H^{s_c}(\R^d)}.
\end{align*}
Then similarly as above, and using the condition of $(d-1)(\frac12-\frac1r)+\alpha(s_c+\gamma)>\frac1q$, it is also bounded by
\begin{align*}
M^{-(d-2)(\frac12-\frac1r)+(d-1)(\frac12-\frac1r)\cdot\frac1\alpha-\frac1{q\alpha}+\gamma}\|P_Mg\|_{\dot H^{s_c}(\R^d)}.\end{align*}
Hence,    using \eqref{eq:18.41} again, it is bounded by 
 $\|P_Mg\|_{\dot H^{s_c}(\R^d)}$.
Therefore, we obtain that
\begin{align*}
\eqref{17.26-III}\lesssim \|P_Mg\|_{\dot H^{s_c}(\R^d)}.
\end{align*}
Combining the three estimates on \eqref{est:cor-3.6}, we get \eqref{17.12}.
\end{proof}

Together with  Lemmas \ref{lem:local-fre-I} and \ref{lem:g-out-rg}, Corollaries  \ref{cor:local-fre-I} and \ref{cor:g-out-rg}, we are ready to prove Proposition \ref{prop:lineares-cpt-largetime-I}.
\begin{proof}[Proof of Proposition \ref{prop:lineares-cpt-largetime-I}]
Using Littlewood-Paley's decomposition, we have
\begin{align*}
\Big\||\nabla|^{s}&\big(e^{it\Delta}\big(\chi_{\le 10}P_{\ge N}g\big)\big)\Big\|_{L^r_x(\R^d)}
\lesssim
\sum\limits_{M\ge N}\Big\||\nabla|^{s }\big(e^{it\Delta}\big(\chi_{\le 10}P_Mg\big)\big)\Big\|_{L^r_x(\R^d)}\\
\lesssim &
\sum\limits_{M\ge N}\Big\||\nabla|^{s }\big(\chi_{\le \frac1{10}M|t|}e^{it\Delta}\big(\chi_{\le 10}P_Mg\big)\big)\Big\|_{L^r_x(\R^d)}
+
\sum\limits_{M\ge N}\Big\||\nabla|^{s }\big(\chi_{\ge \frac1{10}M|t|}e^{it\Delta}\big(\chi_{\le 10}P_Mg\big)\big)\Big\|_{L^r_x(\R^d)}.
\end{align*}
Using Lemma \ref{lem:local-fre-I}, we get that for any $K\in\Z^+$,
\begin{align*}
\sum\limits_{M\ge N}\Big\||\nabla|^{s }\big(\chi_{\le \frac1{10}M|t|}e^{it\Delta}\big(\chi_{\le 10}P_Mg\big)\big)\Big\|_{L^r_x(\R^d)}
\lesssim
|t|^{-d(\frac12-\frac1r)}N^{-K}\|P_{\ge N}g\|_{\dot H^{s_c}(\R^d)}.
\end{align*}
Using Lemma \ref{lem:g-out-rg},
\begin{align*}
\sum\limits_{M\ge N}\Big\||\nabla|^{s }\big(\chi_{\ge \frac1{10}M|t|}e^{it\Delta}\big(\chi_{\le 10}P_Mg\big)\big)\Big\|_{L^r_x(\R^d)}\lesssim
N^{-(d-2)\big(\frac12-\frac1r\big)+s-s_c }|t|^{-(d-1)\big(\frac12-\frac1r\big)}\left\|P_{\ge N}g\right\|_{\dot H^{s_c}_x(\R^d)}.
\end{align*}
Combining these estimates, we obtain \eqref{13.26}.

Now we prove \eqref{14.49}. Firstly, we give a reduction as following. Fix $q\ge 2$, and let $\varepsilon_0=\frac1{100q}$. Then to prove \eqref{14.49}, we only need to consider the estimates on the triples $(q,r,\gamma)$ when $\gamma\ge \varepsilon_0$. Indeed, if $\gamma\le \varepsilon_0$, then by the Sobolev inequality it follows from the case when $r$ and $\gamma$ satisfying
\begin{align*}
\frac1r=\frac12-\frac2{dq}+\frac{\varepsilon_0}d;\quad \gamma=\varepsilon_0. 
\end{align*}
%The reason is that, the triple $(q,r,\gamma)$ above satisfies the conditions in Proposition \ref{prop:lineares-cpt-largetime-I} when $d\ge 3$, moreover, the estimates on the general cases can be followed by the estimate on this case and the Sobolev inequality.
Using Littlewood-Paley's decomposition and Lemma \ref{lem:LittlePaley-ineq},
\begin{align*}
\Big\|\langle & t^{\alpha}|\nabla|\rangle^{\beta}|\nabla|^{s_c+\gamma}e^{it\Delta}\big(\chi_{\le 10}P_{\ge N}g\big)\Big\|_{L^q_tL^r_x(\R\times\R^d)}\\
&\lesssim \Big\|\langle t^{\alpha}|\nabla|\rangle^{\beta}|\nabla|^{s_c+\gamma}e^{it\Delta}\big(\chi_{\le 10}\sum\limits_{M\ge N}P_Mg\big)\Big\|_{L^q_tL^r_x(\R\times\R^d)}\\
&\lesssim \Big\|\Big(\sum\limits_{M\ge N}\Big(\langle t^{\alpha}|\nabla|\rangle^{\beta}|\nabla|^{s_c+\gamma}e^{it\Delta}\big(\chi_{\le 10}P_Mg\big)\Big)^2\Big)^\frac12\Big\|_{L^q_tL^r_x(\R\times\R^d)}.
\end{align*}
Since $q\ge 2,r\ge 2$, it is dominated by
$$
 \Big(\sum\limits_{M\ge N}\Big\|\langle t^{\alpha}|\nabla|\rangle^{\beta}|\nabla|^{s_c+\gamma}e^{it\Delta}\big(\chi_{\le 10}P_Mg\big)\Big\|_{L^q_tL^r_x(\R\times\R^d)}^2\Big)^\frac12.
$$
Therefore, we obtain
\begin{align*}
\Big\|\langle & t^{\alpha}|\nabla|\rangle^{\beta}|\nabla|^{s_c+\gamma}\big(e^{it\Delta}\big(\chi_{\le 10}P_{\ge N}g\big)\big)\Big\|_{L^q_tL^r_x(\R\times\R^d)}\\
&\lesssim \Big(\sum\limits_{M\ge N}\Big\|\langle t^{\alpha}|\nabla|\rangle^{\beta}|\nabla|^{s_c+\gamma}e^{it\Delta}\big(\chi_{\le 10}P_Mg\big)\Big\|_{L^q_tL^r_x(\R\times\R^d)}^2\Big)^\frac12.
\end{align*}
Now we check the conditions \eqref{17.33-I} and \eqref{17.34-I}. Setting
$$
s_*=\max\{-\varepsilon_0, s_{*,1},s_{*,2}\},
$$
then $s_c+\gamma\ge 0$. Hence, the conditions \eqref{17.33-I} and \eqref{17.34-I} reduce to
$$
(d-1)(\frac12-\frac1r)>\alpha\beta+\frac1q,
$$
which is valid by choosing $\alpha\beta$ small enough. Then by Corollaries \ref{cor:local-fre-I} and \ref{cor:g-out-rg},
we have
\begin{align*}
\Big\|\langle t^{\alpha}|\nabla|\rangle^{\beta} & |\nabla|^{s_c+\gamma}\big(e^{it\Delta}\big(\chi_{\le 10}P_{\ge N}g\big)\big)\Big\|_{L^q_tL^r_x(\R\times\R^d)}\\
&\lesssim \Big(\sum\limits_{M\ge N}\|P_Mg\|_{\dot H^{s_c}(\R^d)}^2\Big)^\frac12
\lesssim \|P_{\ge N}g\|_{\dot H^{s_c}(\R^d)}.
\end{align*}
This proves the proposition.
\end{proof}

\section{Nonlinear flow estimates on localized initial data}

In this section, we give some nonlinear estimates. Firstly, we give some local time and small data estimates.

\subsection{Local theory}

Since $u_0\in \dot H^{s_c}(\R^d)$, we have the following local and small data results, the proofs are standard. However, we give the details for the sake of the completeness. The first is essentially proved by Guo, Wang \cite{GW}.
\begin{lem}\label{lem:local-result-u}
Let $s_0=-\min\{\frac{d-1}{2d-1},\frac{2(d-1)}{(2d-1)(p+1)}\}$, then for any $s_c>s_0$, the following result holds.
For any fixing $\delta_0>0$,  and any radial function $u_0\in \dot H^{s_c}(\R^d)$, there exists $t_0=t_0(u_0,\delta_0)>0$, such that the Cauchy problem \eqref{eqs:NLS-cubic} is well-posed on the time interval $[0,t_0]$. Moreover
the solution $u$ satisfies
\begin{align}
\|u\|_{L^\infty_t\dot H^{s_c}_x([0,t_0]\times\R^d)}\lesssim 1;\quad \big\||\nabla|^{s_c+\gamma} u\big\|_{L^q_tL^r_x([0,t_0]\times\R^d)}\lesssim \delta_0.\label{3.39}
\end{align}
Here the triple $(q,r,\gamma)$ verifies \eqref{Str-conditions} and $\gamma\in [-s_c,-s_0)$.
\end{lem}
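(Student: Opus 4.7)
The plan is to establish this local well-posedness by a standard Picard-contraction argument on the Duhamel map
\[
\Phi(u)(t) = e^{it\Delta}u_0 - i\mu \int_0^t e^{i(t-s)\Delta}\big(|u|^p u\big)(s)\,ds,
\]
set up in a complete metric ball cut out by two norms: the energy norm $\|u\|_{L^\infty_t \dot H^{s_c}_x([0,t_0]\times\R^d)}$ and a radial Strichartz norm $\||\nabla|^{s_c+\gamma} u\|_{L^q_t L^r_x([0,t_0]\times\R^d)}$ for a triple $(q,r,\gamma)$ taken from Lemma \ref{lem:radial-Str}. Radial symmetry is preserved under $\Phi$ since $e^{it\Delta}$ commutes with rotations and $|u|^p u$ is rotationally equivariant, so Lemma \ref{lem:radial-Str} applies throughout.

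First I would dispose of the linear term. Writing $|\nabla|^{s_c+\gamma}e^{it\Delta}u_0 = |\nabla|^\gamma e^{it\Delta}(|\nabla|^{s_c}u_0)$ and using $|\nabla|^{s_c}u_0 \in L^2$ (radial), Lemma \ref{lem:radial-Str} yields the global bound
\[
\big\||\nabla|^{s_c+\gamma} e^{it\Delta} u_0\big\|_{L^q_t L^r_x(\R\times\R^d)} \lesssim \|u_0\|_{\dot H^{s_c}},
\]
for any admissible $(q,r,\gamma)$ satisfying \eqref{Str-conditions}. Restricting to $[0,t_0]$, dominated convergence (using $q < \infty$) lets me pick $t_0 = t_0(u_0,\delta)$ small enough, depending on the profile of $u_0$, so that this localized norm is $\le \delta$. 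The $L^\infty_t \dot H^{s_c}_x$ part is conserved by the unitary group.

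For the nonlinear term, I would apply the inhomogeneous dual form of Lemma \ref{lem:radial-Str} to bound
\[
\Big\||\nabla|^{s_c+\gamma}\!\!\int_0^t \!e^{i(t-s)\Delta}(|u|^p u)\,ds\Big\|_{L^q_t L^r_x} + \Big\|\int_0^t\! e^{i(t-s)\Delta}(|u|^p u)\,ds\Big\|_{L^\infty_t \dot H^{s_c}_x} \lesssim \big\||\nabla|^{s_c+\tilde\gamma}(|u|^p u)\big\|_{L^{\tilde q'}_t L^{\tilde r'}_x}
\]
for a suitable dual triple $(\tilde q,\tilde r,\tilde\gamma)$. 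The right-hand side is handled by the fractional Leibniz rule (Lemma \ref{lem:Frac_Leibniz}), pulling out $p$ factors of $u$ in some $L^{a}_t L^{b}_x$ and keeping one factor with $|\nabla|^{s_c+\tilde\gamma}$. The "loose" $u$ factors are controlled by interpolating the energy norm with the Strichartz norm, then upgraded to the required Lebesgue exponent by the radial Sobolev embedding of Lemma \ref{lem:radial-Sob}. Since $s_c + \gamma \ge 0$ on the allowed range $\gamma \in [-s_c,-s_0)$, no negative regularities enter the Leibniz step.

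The main obstacle, and the content of the hypothesis $s_c > s_0$, is the bookkeeping: the range of attainable gains in Lemma \ref{lem:radial-Str} is $\gamma < \tfrac{2}{q}\cdot\tfrac{d-1}{2d-1}$ (see the remark after Lemma \ref{lem:radial-Str}), and the two specific endpoints $q=2$ and $q=p+1$ saturate this, giving the two terms $\tfrac{d-1}{2d-1}$ and $\tfrac{2(d-1)}{(2d-1)(p+1)}$ appearing in $s_0$. The triple with $q=p+1$ is the natural carrier norm, since its $p$-fold Hölder product lands in the dual Strichartz exponent; the triple with $q=2$ is what forces the scaling match $\frac{2}{q} + \frac{d}{r} = \frac{d}{2} + \gamma$ to admit a valid $(\tilde q, \tilde r, \tilde\gamma)$ after Leibniz and Sobolev. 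Verifying that $s_c > s_0$ is precisely what allows a simultaneous choice of $(q,r,\gamma)$ and $(\tilde q,\tilde r,\tilde\gamma)$ satisfying all these constraints is the delicate point. Once these triples are selected, contraction on the ball follows from the $p$-linearity of the nonlinear term (yielding a factor of $\delta^p$ to beat the Strichartz constants), and uniqueness in the same space is standard.
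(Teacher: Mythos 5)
Your skeleton matches the paper's: a Picard/continuity argument based on the Duhamel map, with the linear term made small on $[0,t_0]$ by dominated convergence (valid because $q<\infty$), and the nonlinear term bounded via the dual form of the radial Strichartz estimate in Lemma~\ref{lem:radial-Str} plus the fractional Leibniz rule of Lemma~\ref{lem:Frac_Leibniz}. Two details diverge from the paper's proof, however. First, the paper closes a self-estimate \emph{entirely} in a single Strichartz carrier norm and never invokes the energy norm $L^\infty_t\dot H^{s_c}_x$ in the contraction step: for $p\le1$ it sets $X=\||\nabla|^{s_c+\gamma}u\|_{L^2_tL^{r_1}_x}$ with $\tfrac1{r_1}=\tfrac12-\tfrac1d+\tfrac\gamma d$, the Leibniz step produces $X\cdot\|u\|^p_{L^2_tL^{r_2}_x}$ with $\tfrac1{r_2}=\tfrac12-\tfrac1d-\tfrac{s_c}d$, and the \emph{ordinary} Sobolev embedding $\|u\|_{L^{r_2}}\lesssim\||\nabla|^{s_c+\gamma}u\|_{L^{r_1}}$ (not the weighted radial embedding of Lemma~\ref{lem:radial-Sob}, which plays no role here) turns the loose factors back into the carrier norm, giving $X\lesssim\delta+X^{p+1}$; the energy bound $\|u\|_{L^\infty_t\dot H^{s_c}_x}\lesssim1$ is then recovered afterward, decoupled from the contraction. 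Your proposal to interpolate the loose factors between the energy norm and a Strichartz norm would also close, since a positive power of the small Strichartz norm survives, but it couples the two norms and forces a joint bootstrap that the paper avoids. Second, your reading of where $s_0$ comes from is not quite right: the two terms of $\min\{\tfrac{d-1}{2d-1},\,\tfrac{2(d-1)}{(2d-1)(p+1)}\}$ are not two simultaneous constraints from two triples used in the same estimate. They are the thresholds from the two disjoint cases $p\le1$ and $p>1$, in which the carrier time exponent is $q=2$ (carrier $L^2_tL^{r_1}_x$) and $q=p+1$ (carrier $L^{p+1}_tL^{r_3}_x$) respectively; combining $\gamma\ge-s_c$ with the radial Strichartz ceiling $\gamma<\tfrac2q\cdot\tfrac{d-1}{2d-1}$ gives the applicable bound in each case, and the $\min$ is merely a compact way to write both, since $\tfrac{2(d-1)}{(2d-1)(p+1)}\le\tfrac{d-1}{2d-1}$ exactly when $p\ge1$.
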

\begin{remark}
The result in this lemma improves the index obtained by Guo, Wang \cite{GW}, who proved the local well-posedness in $\dot H^{s_c}(\R^d)$ when $s_c>-\frac{d-1}{2d+1}$ for radial datum. In particular, in this lemma, when $d\ge 4$, the restriction is $ s_c> -\frac{d-1}{2d-1}$ ($s_c>-0.275, -0.388$ when $d=2,3$ respectively).
\end{remark}
\begin{proof}[Proof of Lemma \ref{lem:local-result-u}]
We only show \eqref{3.39} for some $t_0>0$. Then the local well-posedness with the lifespan $[0, t_0)$ is followed by the standard fixed point argument.
In the following, we prove \eqref{3.39} by two cases: $p\le 1$ and $p>1$ separately.

%We denote the space $X$ by its norm as
%$$
%\|u\|_X=\sup\limits_N\Big(\delta_0^{-1}\Big)
%$$
If $p\le 1$,  we denote the parameter $r_1$ as
$$
\frac1{r_1}=\frac12-\frac1d+\frac{\gamma}{d}.
$$
Then for any $s_c>-\frac{d-1}{2d-1}$ and $\gamma\ge -s_c$,
by the Duhamel formula and Lemma \ref{lem:radial-Str}, we have
\begin{align*}
\big\||\nabla|^{s_c+\gamma} u\big\|_{L^2_tL^{r_1}_x([0,t_0]\times\R^d)}
\lesssim \||\nabla|^{s_c+\gamma} e^{it\Delta}u_0\|_{L^2_tL^{r_1}_x([0,t_0]\times\R^d)}+\big\||\nabla|^{s_c+\gamma}(|u|^pu)\big\|_{L^{\tilde q'}_tL^{\tilde r'}_x([0,t_0]\times \R^d)},
\end{align*}
where $(\tilde q,\tilde r)$ satisfies
$$
\tilde q=\frac{2}{1-p},\quad \frac2{\tilde q}+\frac d{\tilde r}=\frac d2-\gamma.
$$
Hence, by Lemma \ref{lem:Frac_Leibniz}, we get
\begin{align*}
\||\nabla|^{s_c+\gamma}u\|_{L^2_tL^{r_1}_x([0,t_0]\times\R^d)}
\lesssim& \||\nabla|^{s_c+\gamma} e^{it\Delta}u_0\|_{L^2_tL^{r_1}_x([0,t_0]\times\R^d)}\\
&\qquad+\big\||\nabla|^{s_c+\gamma} u\big\|_{L^2_tL^{r_1}_x([0,t_0]\times\R^d)}\big\|u\big\|_{L^2_tL^{r_2}_x([0,t_0]\times \R^d)}^p,
\end{align*}
where the parameter $r_2$ satisfies
$$
\frac1{r_2}=\frac12-\frac1d-\frac{s_c}{d}=\frac{2-p}{dp}.
$$
Then by the Sobolev inequality, we obtain that
\begin{align*}
\big\||\nabla|^{s_c+\gamma} u\big\|_{L^2_tL^{r_1}_x([0,t_0]\times\R^d)}
\lesssim \||\nabla|^{s_c+\gamma} e^{it\Delta}u_0\|_{L^2_tL^{r_1}_x([0,t_0]\times\R^d)}+\big\||\nabla|^{s_c+\gamma} u\big\|_{L^2_tL^{r_1}_x([0,t_0]\times\R^d)}^{p+1}.
\end{align*}
Therefore, for any $\delta_0>0$, if
\begin{align}
\big\||\nabla|^{s_c+\gamma} e^{it\Delta}u_0\big\|_{L^2_tL^{r_1}_x([0,t_0]\times\R^d)}\le \delta_0, \label{2.46}
\end{align}
then by the continuity argument,
\begin{align}
\big\||\nabla|^{s_c+\gamma} u\big\|_{L^2_tL^{r_1}_x([0,t_0]\times\R^d)}\lesssim \delta_0.\label{2.46-I}
\end{align}
Note that
\begin{align*}
\big\||\nabla|^{s_c+\gamma} e^{it\Delta}u_0\big\|_{L^2_tL^{r_1}_x(\R\times\R^d)}\lesssim  \|u_0\|_{\dot H^{s_c}(\R^d)},
\end{align*}
 \eqref{2.46} is verified when $t_0=t_0(u_0,\delta_0)$ is small enough, and thus we have \eqref{2.46-I}.

Similarly,
\begin{align*}
\|u\|_{L^\infty_t\dot H^{s_c}_x([0,t_0]\times\R^d)}
\lesssim &\|e^{it\Delta}u_0\|_{L^\infty_t\dot H^{s_c}_x([0,t_0]\times\R^d)}+\big\||\nabla|^{s_c+\gamma}(|u|^pu)\big\|_{L^{\tilde q'}_tL^{\tilde r'}_x([0,t_0]\times \R^d)}\\
\lesssim& \|u_0\|_{\dot H^{s_c}(\R^d)}+\big\||\nabla|^{s_c+\gamma} u\big\|_{L^2_tL^{r_1}_x([0,t_0]\times\R^d)}^{p+1}.
\end{align*}
Then by \eqref{2.46-I}, we obtain that
$$
\|u\|_{L^\infty_t\dot H^{s_c}_x([0,t_0]\times\R^d)}\lesssim 1.
$$
Further, for general triple $(q,r,\gamma)$ verifying \eqref{Str-conditions} and $\gamma\in [-s_c,\frac{d-1}{2d-1})$,
\begin{align*}
\big\||\nabla|^{s_c+\gamma} u\big\|_{L^q_tL^r_x([0,t_0]\times\R^d)}
\lesssim& \||\nabla|^{s_c+\gamma} e^{it\Delta}u_0\|_{L^q_tL^r_x([0,t_0]\times\R^d)}+\big\||\nabla|^{s_c+\gamma}(|u|^pu)\big\|_{L^{\tilde q'}_tL^{\tilde r'}_x([0,t_0]\times \R^d)}\\
\lesssim&\delta_0+\big\||\nabla|^{s_c+\gamma} u\big\|_{L^2_tL^{r_1}_x([0,t_0]\times\R^d)}^{p+1}.
\end{align*}
Hence, we get
$$
\big\||\nabla|^{s_c+\gamma} u\big\|_{L^q_tL^r_x([0,t_0]\times\R^d)}\lesssim \delta_0.
$$

If $p>1$, we denote the parameter $r_3$ as
$$
\frac1{r_3}=\frac12-\frac2{d(p+1)}+\frac{\gamma}{d}.
$$
Then similarly as above, we obtain that for any $s_c>-\frac{2(d-1)}{(2d-1)(p+1)}$ and $\gamma\ge -s_c$,
\begin{align*}
\big\||\nabla|^{s_c+\gamma} u\big\|_{L^{p+1}_tL^{r_3}_x([0,t_0]\times\R^d)}
\lesssim \||\nabla|^{s_c+\gamma} e^{it\Delta}u_0\|_{L^{p+1}_tL^{r_3}_x([0,t_0]\times\R^d)}+\big\||\nabla|^{s_c+\gamma}(|u|^pu)\big\|_{L^1_tL^{r'_4}_x([0,t_0]\times\R^d)},
\end{align*}
where $r_4$ satisfies
$$
\frac d{r_4}=\frac d2-\gamma.
$$
Hence, by Lemma \ref{lem:Frac_Leibniz} and Sobolev's inequality, we get
\begin{align*}
\big\||\nabla|^{s_c+\gamma} u\big\|_{L^{p+1}_tL^{r_3}_x([0,t_0]\times\R^d)}
\lesssim \||\nabla|^{s_c+\gamma} e^{it\Delta}u_0\|_{L^{p+1}_tL^{r_3}_x([0,t_0]\times\R^d)}+\big\||\nabla|^{s_c+\gamma} u\big\|_{L^{p+1}_tL^{r_3}_x([0,t_0]\times\R^d)}^{p+1}.
\end{align*}
Treating similarly as above, by choosing $t_0=t_0(u_0,\delta_0)$ small enough, we obtain that
\begin{align*}
\big\||\nabla|^{s_c+\gamma} u\big\|_{L^{p+1}_tL^{r_3}_x([0,t_0]\times\R^d)}\lesssim \delta_0,
\end{align*}
and thus obtain \eqref{3.39}.
\end{proof}

Now we fix $\delta_0\in (0,1)$ to be an absolute  small constant which will determined later. For simplicity, we set $t_0(u_0, \delta_0)=2$ by rescaling. Moreover,   we set a number $N=N(\delta_0)$ such that
%\begin{align}
%t_0^{\alpha_0}N\ge 1, \label{t0-N}
%\end{align}
%and
\begin{align}
\|P_{\ge N}u_0\|_{\dot H^{s_c}(\R^d)}\le \delta_0. \label{small-highfreq}
\end{align}

To prove Theorem \ref{thm:main01}, we split the initial data $u_0$ into three parts as
$$
u_0=\chi_{\le 10}\big(P_{\ge N}u_0\big)+P_{\le N}u_0+\chi_{\ge 10}\big(P_{\ge N}u_0\big).
$$
Accordingly, let
$$
v_0=\chi_{\le 10}\big(P_{\ge N}u_0\big),
$$
and $v$ be the solution of the following equation,
 \begin{equation}\label{eqs:NLS-cubic-high}
   \left\{ \aligned
    &i\partial_{t}v+\Delta v= \chi_{\le 1}(t)|v|^p v,
    \\
    &v(0,x)  =v_0.
   \endaligned
  \right.
 \end{equation}
Moreover, let
$$
w_0=\chi_{\ge 10}\big(P_{\ge N}u_0\big)+P_{\le N}u_0,
$$
and $w=u-v$. Then $w$ is the solution of the following equation,
 \begin{equation}\label{eqs:NLS-cubic-w}
   \left\{ \aligned
    &i\partial_{t}w+\Delta w= |u|^pu-\chi_{\le 1}(t)|v|^p v,
    \\
    &w(0,x)  =w_0.
   \endaligned
  \right.
 \end{equation}

Then the second result is a global result with small data in $\dot H^{s_c}$-level.
\begin{lem}\label{lem:smalldata}
For any $s_c>s_0$, the following result holds.
Let $u_0\in \dot H^{s_c}(\R^d)$ be radial, then there exist a small constant $\delta_0$ and a large constant $N$ verifying \eqref{small-highfreq}, such that the Cauchy problem \eqref{eqs:NLS-cubic-high} is globally well-posed. In particular,
the solution $v$ satisfies
$$
\|v\|_{L^\infty_t\dot H^{s_c}_x(\R\times\R^d)}+\big\||\nabla|^{s_c+\gamma} v\big\|_{L^q_tL^r_x(\R\times\R^d)}\lesssim \|v_0\|_{\dot H^{s_c}(\R^d)}.
$$
Here the triple $(q,r,\gamma)$ verifies \eqref{Str-conditions} and $\gamma\in [0,-s_0)$.
\end{lem}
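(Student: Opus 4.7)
My approach is a global small-data contraction mapping argument, modeled on the proof of Lemma \ref{lem:local-result-u}, where the smallness now comes from the hypothesis \eqref{small-highfreq} rather than from restricting to a short time interval. The role of the time cut-off $\chi_{\le 1}(t)$ in \eqref{eqs:NLS-cubic-high} is simply to confine the nonlinear interaction to $|t|\le 11/10$; outside this window the equation reduces to the free evolution, which is already controlled globally by Lemma \ref{lem:radial-Str}.

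Concretely, I would fix an admissible triple $(q,r,\gamma)$ from \eqref{Str-conditions} with $\gamma\in[-s_c,-s_0)$ (so that $s_c+\gamma\ge 0$ and the fractional Leibniz rule of Lemma \ref{lem:Frac_Leibniz} applies), together with the dual triple $(\tilde q,\tilde r,-\gamma)$ chosen exactly as in the proof of Lemma \ref{lem:local-result-u}: $\tilde q=2/(1-p)$ when $p\le 1$, and the pair giving an $L^1_t$ integrand when $p>1$. I would then set up the map
\[\Psi(v)(t)=e^{it\Delta}v_0-i\int_0^t e^{i(t-s)\Delta}\chi_{\le 1}(s)|v(s)|^p v(s)\,ds\]
on the complete metric space
\[X=\Big\{v:\ \|v\|_{L^\infty_t\dot H^{s_c}_x(\R\times\R^d)}+\big\||\nabla|^{s_c+\gamma}v\big\|_{L^q_tL^r_x(\R\times\R^d)}\le 2C_0\delta_0\Big\}.\]

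For the linear piece, Lemma \ref{lem:radial-Str} gives $\|e^{it\Delta}v_0\|_X\lesssim \|v_0\|_{\dot H^{s_c}}$, and the inequality $\|v_0\|_{\dot H^{s_c}}\lesssim\|P_{\ge N}u_0\|_{\dot H^{s_c}}\lesssim\delta_0$ follows from the boundedness of multiplication by $\chi_{\le 10}$ on $\dot H^{s_c}$ for $s_c<0$, which I would verify by duality, using $\dot H^{s_c}=(\dot H^{-s_c})^*$ together with Lemma \ref{lem:frac_Hs} on the conjugate side. For the Duhamel piece, the factor $\chi_{\le 1}(s)$ restricts the $s$-integration to the bounded interval $[-11/10,\,11/10]$, so the same chain used in the proof of Lemma \ref{lem:local-result-u} — dual radial Strichartz, Lemma \ref{lem:Frac_Leibniz}, Sobolev embedding, and a H\"older-in-time step that absorbs the compactly supported $\chi_{\le 1}$ — yields
\[\Big\||\nabla|^{s_c+\gamma}\int_0^t e^{i(t-s)\Delta}\chi_{\le 1}(s)|v|^p v\,ds\Big\|_{L^q_tL^r_x(\R\times\R^d)}\lesssim \|v\|_X^{p+1}.\]
Combining the two bounds gives $\|\Psi(v)\|_X\le C_0\delta_0+C_0(2C_0\delta_0)^{p+1}\le 2C_0\delta_0$ for $\delta_0$ sufficiently small, and the analogous difference estimate closes the contraction, producing a unique global solution obeying the stated bounds.

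The main obstacle I anticipate is that the fractional-Leibniz/Sobolev chain does not tolerate derivatives of negative order, so the choice $\gamma\in[-s_c,-s_0)$ is essential to push $s_c+\gamma$ non-negative; this is precisely why the lemma imposes $s_c>s_0$. A secondary technical point is the boundedness of $\chi_{\le 10}$ on $\dot H^{s_c}$ with $s_c<0$, handled as above; alternatively one may bypass $\|v_0\|_{\dot H^{s_c}}$ entirely and invoke Proposition \ref{prop:lineares-cpt-largetime-I} directly on $e^{it\Delta}v_0$, since $v_0=\chi_{\le 10}(P_{\ge N}u_0)$ is exactly of the form the proposition covers.
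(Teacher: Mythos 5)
Your argument is essentially the same as the paper's: the paper also runs the small-data scheme from Lemma \ref{lem:local-result-u} (radial Strichartz of Lemma \ref{lem:radial-Str}, fractional Leibniz of Lemma \ref{lem:Frac_Leibniz}, Sobolev/H\"older), now closed globally because $\chi_{\le 1}(s)$ confines the nonlinear interaction to a fixed bounded time window and the smallness is supplied by $\|v_0\|_{\dot H^{s_c}}\lesssim\delta_0$ via \eqref{small-highfreq} rather than by shrinking $t_0$. The only cosmetic differences are that you phrase the closure as a contraction-mapping fixed point while the paper uses a continuity/bootstrap argument, and you spell out the bound $\|v_0\|_{\dot H^{s_c}}\lesssim\|P_{\ge N}u_0\|_{\dot H^{s_c}}$ (boundedness of multiplication by $\chi_{\le 10}$ on $\dot H^{s_c}$ with $s_c<0$, by duality from Lemma \ref{lem:frac_Hs}), a step the paper leaves implicit.
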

\begin{proof}
We adopt the same notation and argue similarly as in the proof of Lemma \ref{lem:local-result-u}. Moreover, we may assume that $\gamma\ge -s_c$, otherwise it follows by the Sobolev inequality.  In the case of $p\le 1$,  for any $s_c>-\frac{d-1}{2d-1}$, 
\begin{align*}
\big\||\nabla|^{s_c+\gamma} v\big\|_{L^2_tL^{r_1}_x(\R\times\R^d)}
\lesssim \|v_0\|_{\dot H^{s_c}(\R^d)}+\big\||\nabla|^{s_c+\gamma} v\big\|_{L^2_tL^{r_1}_x(\R\times\R^d)}^{p+1}.
\end{align*}
Hence, by the continuity argument and choosing $\delta_0$ small enough, we obtain
\begin{align*}
\big\||\nabla|^{s_c+\gamma} v\big\|_{L^2_tL^{r_1}_x(\R\times\R^d)}
\lesssim \|v_0\|_{\dot H^{s_c}(\R^d)}.
\end{align*}
Using the estimate above, we have the desired results.  In the case of $p> 1$,  for any $s_c>-\frac{2(d-1)}{(2d-1)(p+1)}$,
\begin{align*}
\big\||\nabla|^{s_c+\gamma} v\big\|_{L^{p+1}_tL^{r_3}_x(\R\times\R^d)}
\lesssim  \|v_0\|_{\dot H^{s_c}(\R^d)}+\big\||\nabla|^{s_c+\gamma} v\big\|_{L^{p+1}_tL^{r_3}_x(\R\times\R^d)}^{p+1}.
\end{align*}
Hence, arguing similarly as above, we obtain the desired estimates again.
\end{proof}

\subsection{Nonlinear estimates on $v$}

In this subsection, we give the estimates on $v$. For convenience, we introduce some notation.
%Let $r_0,\gamma_0$ and $\varepsilon_0$ be the parameters
%\begin{align}
%\frac1{\frac{2d}{d-2}}=\frac12-\frac1{2d-1}-\varepsilon_0,\quad \gamma_0=\frac{d-1}{2d-1}-d\varepsilon_0,\quad \mbox{and }\quad \varepsilon_0=\frac1{2(2d-1)(d-1)}.\label{a0-r0}
%\end{align}
%Then the triple $(2,r_0,\gamma_0)$ satisfies \eqref{Str-conditions} and \eqref{q-r-gamma-condition}.
We denote
$X(\alpha,\beta)$ be the space with the norm:
$$
\|f\|_{X(\alpha,\beta)}=\big\|\langle t^{\alpha}|\nabla|\rangle^{\beta}|\nabla|^{s_c}P_Mf\big\|_{l^\infty_M L^2_t L^{\frac{2d}{d-2}}_x(\{M\ge 1\}\times\R\times\R^d)}.
$$
Then the main result in this subsection is
\begin{prop}\label{prop:v-X-ab}
Let $v$ be the solution of \eqref{eqs:NLS-cubic-high}, then there exist $\alpha_0\ge 1,\beta_0>0$ and $s_*<0$, such that for any $s_c\in [s_*,0)$,
$$
\|v\|_{X(\alpha_0,\beta_0)}\lesssim \|v_0\|_{\dot H^{s_c}(\R^d)}.
$$
\end{prop}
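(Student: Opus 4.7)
The plan is to write $v = v_L + v_{NL}$ via Duhamel,
\[
v(t) = e^{it\Delta} v_0 - i\int_0^t e^{i(t-s)\Delta}\chi_{\le 1}(s)|v|^p v(s)\,ds =: v_L(t) + v_{NL}(t),
\]
and to estimate the two pieces separately in the norm $X(\alpha_0,\beta_0)$.

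The linear piece is immediate: since $v_0 = \chi_{\le 10}(P_{\ge N}u_0)$ has exactly the structure required by Proposition \ref{prop:lineares-cpt-largetime-I}, I would apply that proposition with the admissible triple $(q,r,\gamma)=(2,\tfrac{2d}{d-2},0)$; one verifies that this triple satisfies \eqref{Str-conditions} and that the strict inequality \eqref{q-r-gamma-condition} reduces to $\tfrac{1}{2}<\tfrac{d-1}{d}$, which holds for every $d\ge 3$ (in particular for $d\ge 4$). Taking $\alpha_0 \ge \alpha_*(q,r,\gamma)$, $\beta_0\le \beta_*(q,r,\gamma)$, and $s_c\in [s_*(q),0)$, Proposition \ref{prop:lineares-cpt-largetime-I} gives
\[
\big\|\langle t^{\alpha_0}|\nabla|\rangle^{\beta_0}|\nabla|^{s_c} e^{it\Delta}v_0\big\|_{L^2_t L^{\frac{2d}{d-2}}_x} \lesssim \|v_0\|_{\dot H^{s_c}},
\]
and the $l^\infty_M$ form follows because $P_M$ is a bounded multiplier on $L^{\frac{2d}{d-2}}_x$, so $\sup_M \|P_M(\cdot)\|\le C\|\cdot\|$.

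For the nonlinear piece, Lemma \ref{lem:smalldata} supplies the global Strichartz control $\||\nabla|^{s_c+\gamma}v\|_{L^q_tL^r_x}\lesssim \|v_0\|_{\dot H^{s_c}}$ for admissible $\gamma\in[-s_c,-s_0)$. I split the integral into $|t|\le 2$ and $|t|\ge 2$. On $|t|\le 2$ the weight is dominated on $P_M$ by $\langle 2^{\alpha_0}M\rangle^{\beta_0}\lesssim 1+M^{\beta_0}$; Bernstein trades $M^{\beta_0}$ for $|\nabla|^{\beta_0}$, so dual radial Strichartz (Lemma \ref{lem:radial-Str}) on the Duhamel term together with the fractional Leibniz rule (Lemma \ref{lem:Frac_Leibniz}) on $|v|^p v$ reduces the bound to $\||\nabla|^{s_c+\beta_0}v\|_{L^q_tL^r_x}\cdot\|v\|_{L^q_tL^r_x}^{p}\lesssim \|v_0\|_{\dot H^{s_c}}^{p+1}\le \delta_0^p\cdot\|v_0\|_{\dot H^{s_c}}$ (requiring $\beta_0\in[-s_c,-s_0)$, arranged by taking $s_c$ close enough to $0$). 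For $|t|\ge 2$ the time cutoff $\chi_{\le 1}(s)$ forces
\[
v_{NL}(t)=e^{i(t-2)\Delta}V_0,\qquad V_0:=-i\int_0^{11/10} e^{i(2-s)\Delta}\chi_{\le 1}(s)|v|^p v(s)\,ds,
\]
and setting $\tau=t-2$, one has $\langle t^{\alpha_0}|\nabla|\rangle^{\beta_0} P_M \lesssim \langle \tau^{\alpha_0}|\nabla|\rangle^{\beta_0} P_M + M^{\beta_0} P_M$ for $M\ge 1$, with the latter term handled as in the $|t|\le 2$ case.

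The main obstacle is then establishing, uniformly in $M\ge 1$,
\[
\big\|\langle \tau^{\alpha_0}|\nabla|\rangle^{\beta_0}|\nabla|^{s_c}P_M e^{i\tau\Delta}V_0\big\|_{L^2_\tau L^{\frac{2d}{d-2}}_x} \lesssim \|v_0\|_{\dot H^{s_c}},
\]
since $V_0$ is not literally of the form $\chi_{\le 10}(P_{\ge N}g)$ to which Proposition \ref{prop:lineares-cpt-largetime-I} directly applies. My plan is to reprove this estimate by mimicking the stationary-phase argument of Lemma \ref{lem:local-fre-I} and the radial-Sobolev/mismatch argument of Lemma \ref{lem:g-out-rg} with $V_0$ replacing $\chi_{\le 10}(P_M g)$, exploiting that $V_0$ is a short-time Duhamel integral over $s\in[0,11/10]$ whose density $\chi_{\le 1}(s)|v|^p v(s)$ inherits effective spatial concentration from $\supp v_0 \subset \{|x|\le 11\}$ (the bounded-time linear flow of a compactly supported profile has rapidly decaying tails controlled by Lemma \ref{lem:formula-St} and Lemma \ref{lem:mismatch}, while Lemma \ref{lem:radial-Sob} supplies the exterior decay at the Strichartz level). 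The extra Strichartz norm of $|v|^p v$ contributes a factor $\|v_0\|_{\dot H^{s_c}}^p \le \delta_0^p$ that is absorbed by smallness. Combining all estimates and choosing $s_*$ sufficiently close to $0$ so that $[-s_c,-s_0)\cap (0,\beta_*]$ is nonempty, picking $\beta_0$ in that intersection and $\alpha_0$ large enough, yields $\|v\|_{X(\alpha_0,\beta_0)}\lesssim \|v_0\|_{\dot H^{s_c}}$ as required.
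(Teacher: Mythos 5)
Your decomposition $v=v_L+v_{NL}$ with the time split at $|t|=2$ diverges substantially from the paper, which instead splits at $|t|=M^{-1/\alpha_0}$ and, on the far side, applies Duhamel to $P_Mv$ and further breaks the $s$-integral into $[0,t/2]$ and $[t/2,t]$. The paper's $[t/2,t]$ piece is handled by a high/low frequency decomposition whose low piece is controlled by $\|v\|_{X(\alpha_0,\beta_0)}$ \emph{itself}, yielding the bootstrap inequality $\|v\|_X \lesssim \|v_0\|_{\dot H^{s_c}}+\|v_0\|_{\dot H^{s_c}}^p\|v\|_X$, closed by smallness of $\delta_0$. Your proof has no bootstrap; you try to close directly, and that is where the gap opens.

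The genuine gap is your treatment of $|t|\ge 2$. You reduce to $v_{NL}(t)=e^{i(t-2)\Delta}V_0$ and then assert that the argument of Proposition~\ref{prop:lineares-cpt-largetime-I} (via Lemmas~\ref{lem:local-fre-I} and~\ref{lem:g-out-rg}) can be rerun with $V_0$ in place of $\chi_{\le 10}(P_M g)$. But both of those lemmas use compact support of the spatial profile in an essential way: Lemma~\ref{lem:local-fre-I} integrates by parts in $y$ over the fixed ball $\operatorname{supp}\chi_{\le 10}$ using the non-stationary phase bound $|\nabla_y\phi|\gtrsim|\xi|$ (which is valid only because $|y|\lesssim 1$, $|x|\le\frac1{10}M|t|$), and Lemma~\ref{lem:g-out-rg} needs $\operatorname{supp}\chi_{\le 10}\cap\operatorname{supp}\chi_{\ge \frac1{10}M|t|}$ to be far apart for the mismatch/radial-Sobolev machinery. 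The function $V_0$ is \emph{not} compactly supported: $v(s)$ for $s>0$ already has infinite support (the free flow destroys compact support instantly), and $e^{i(2-s)\Delta}$ spreads it further. Your appeal to ``effective spatial concentration'' is not a proof; to salvage it you would need a quantified tail-decay estimate for $V_0$ strong enough to replace compact support in both the non-stationary phase argument (controlling $|\nabla_y\phi|$ uniformly on the effective support against the error from the tail) and the exterior radial-Sobolev step, and no such estimate is provided or cited. That would be an entirely new technical lemma, not a routine modification. The paper avoids this by never needing the linear-flow proposition for anything other than the genuinely compactly supported $v_0$: the $[0,t/2]$ Duhamel piece is dispatched by the explicit kernel bound and $|t-s|\sim|t|$, while the $[t/2,t]$ piece is absorbed via the $X$-norm bootstrap. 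Until you either prove the tail-decay substitute for $V_0$ or restructure around a bootstrap, your argument for $|t|\ge 2$ does not close.
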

\begin{proof}
We write
\begin{align}
\big\|\langle &t^{\alpha_0}|\nabla|\rangle^{\beta_0}|\nabla|^{s_c}P_Mv\big\|_{l^\infty_M L^2_t L^{\frac{2d}{d-2}}_x(\{M\ge 1\}\times\R\times\R^d)}\notag\\
&=\big\|\langle t^{\alpha_0}|\nabla|\rangle^{\beta_0}|\nabla|^{s_c}P_Mv\big\|_{l^\infty_M L^2_t L^{\frac{2d}{d-2}}_x(\{M\ge 1\}\times\{|t|\le M^{-\frac1{\alpha_0}}\}\times\R^d)}\label{17.40}\\
&\quad+\big\|\langle t^{\alpha_0}|\nabla|\rangle^{\beta_0}|\nabla|^{s_c}P_Mv\big\|_{l^\infty_M L^2_t L^{\frac{2d}{d-2}}_x(\{M\ge 1\}\times\{|t|\ge M^{-\frac1{\alpha_0}}\}\times\R^d)}.\label{17.41}
\end{align}

\textbf{Estimates on \eqref{17.40}.} Note that
\begin{align*}
\big\|\langle & t^{\alpha_0}|\nabla|\rangle^{\beta_0}|\nabla|^{s_c}P_Mv\big\|_{ L^2_t L^{\frac{2d}{d-2}}_x(\{|t|\le M^{-\frac1{\alpha_0}}\}\times\R^d)}\lesssim
\big\||\nabla|^{s_c}P_Mv\big\|_{L^2_t L^{\frac{2d}{d-2}}_x(\R\times\R^d)}.
\end{align*}
Then by Lemma \ref{lem:smalldata} (where we choose the triple $(q,r,\gamma)=(2,\frac{2d}{d-2},0)$), it is further controlled by
$
\|v_0\|_{\dot H^{s_c}(\R^d)}.
$
Therefore, we have the bound of \eqref{17.40} as
\begin{align}\label{est:17.40}
\big\|\langle t^{\alpha_0}|\nabla|\rangle^{\beta_0}|\nabla|^{s_c}P_Mv\big\|_{l^\infty_M L^2_t L^{\frac{2d}{d-2}}_x(\{M\ge 1\}\times\{|t|\le M^{-\frac1{\alpha_0}}\}\times\R^d)}
\lesssim  \|v_0\|_{\dot H^{s_c}(\R^d)}.
\end{align}

\textbf{Estimates on \eqref{17.41}.} It is controlled by
$$
\big\| t^{\alpha_0\beta_0}|\nabla|^{\beta_0+s_c}P_Mv\big\|_{l^\infty_M L^2_t L^{\frac{2d}{d-2}}_x(\{M\ge 1\}\times\{|t|\ge M^{-\frac1{\alpha_0}}\}\times\R^d)}.
$$
We only consider the positive time, that is, $t\ge 0$,  the negative time being obtained in the same way. Now we write
\begin{align*}
P_Mv=e^{it\Delta}P_M&v_0+\int_0^{\frac12t} e^{i(t-s)\Delta}\chi_{\le 1}(s)P_M(|v|^pv)\,ds\\
&+\int_{\frac12t}^t e^{i(t-s)\Delta}\chi_{\le 1}(s)P_M(|v|^pv)\,ds,
\end{align*}
then we need to consider the following three parts,
\begin{subequations}\label{17.41-123}
\begin{align}
&\big\|t^{\alpha_0\beta_0}|\nabla|^{\beta_0+s_c}e^{it\Delta}P_Mv_0\big\|_{l^\infty_M L^2_t L^{\frac{2d}{d-2}}_x(\{M\ge 1\}\times\{|t|\ge M^{-\frac1{\alpha_0}}\}\times\R^d)};\label{17.41-I}\\
&\Big\|t^{\alpha_0\beta_0}|\nabla|^{\beta_0+s_c}\!\! \int_0^{\frac12t} e^{i(t-s)\Delta}\chi_{\le 1}(s)P_M(|v|^pv)\,ds\Big\|_{l^\infty_M L^2_t L^{\frac{2d}{d-2}}_x(\{M\ge 1\}\times\{|t|\ge M^{-\frac1{\alpha_0}}\}\times\R^d)};\label{17.41-II}
\end{align}
and
\begin{align}
\Big\|t^{\alpha_0\beta_0}|\nabla|^{\beta_0+s_c}\!\! \int_{\frac12t}^t e^{i(t-s)\Delta}\chi_{\le 1}(s)P_M(|v|^pv)\,ds\Big\|_{l^\infty_M L^2_t L^{\frac{2d}{d-2}}_x(\{M\ge 1\}\times\{|t|\ge M^{-\frac1{\alpha_0}}\}\times\R^d)}.\label{17.41-III}
\end{align}
\end{subequations}

\textbf{Estimates on \eqref{17.41-I}.} Here we choose $s_*=s_*(2),\alpha_0\ge 1$ and $\beta_0>0$ with $\alpha\beta\le \eta_*(2,\frac{2d}{d-2},0)$, where $s_*, \eta_*$ are the parameters obtained in Proposition \ref{prop:lineares-cpt-largetime-I} (we may narrow $s_*$ suitably in the following if necessary). Then by  Proposition \ref{prop:lineares-cpt-largetime-I},  we obtain that for any $s_c\ge s_*$,
\begin{align*}
\big\|t^{\alpha_0\beta_0}|\nabla|^{\beta_0+s_c}e^{it\Delta}P_Mv_0\big\|_{l^\infty_M L^2_t L^{\frac{2d}{d-2}}_x(\{M\ge 1\}\times\{|t|\ge M^{-\frac1{\alpha_0}}\}\times\R^d)}
\lesssim \|v_0\|_{\dot H^{s_c}(\R^d)}.
\end{align*}

\textbf{Estimates on \eqref{17.41-II}.}  From Lemma \ref{lem:formula-St},
\begin{align*}
\Big\|t^{\alpha_0\beta_0}&|\nabla|^{\beta_0+s_c}\!\! \int_0^{\frac12t}e^{i(t-s)\Delta}\chi_{\le 1}(s)P_M(|v|^pv)\,ds\Big\|_{L^2_t L^{\frac{2d}{d-2}}_x(\{|t|\ge M^{-\frac1{\alpha_0}}\}\times\R^d)}\\
&\lesssim
\Big\|t^{\alpha_0 \beta_0}\!\! \int_0^{\frac12t}  |t-s|^{-1}\chi_{\le 1}(s)\Big\||\nabla|^{\beta_0+s_c}P_M(|v|^pv)\Big\|_{L^{\frac{2d}{d+2}}(\R^d)}\,ds\Big\|_{L^2_t(\{|t|\ge M^{-\frac1{\alpha_0}}\})}\\
&\lesssim
\Big\|t^{\alpha_0\beta_0-1}\!\! \int_0^2 \Big\||\nabla|^{\beta_0+s_c}P_M(|v|^pv)\Big\|_{L^{\frac{2d}{d+2}}_x(\R^d)}\,ds\Big\|_{L^2_t(\{|t|\ge M^{-\frac1{\alpha_0}}\})},
\end{align*}
where we have used the relationship $|t-s|\sim |t|$. We can
 choose $\alpha_0\beta_0$ small enough, such that
 $
 \alpha_0\beta_0<\frac12.
 $
 Then taking $L^2_t$ first and using Bernstein's inequality,  the inequality above is bounded by
\begin{align}
&\int_0^2 \Big\||\nabla|^{s_c+\frac1{2\alpha_0}}P_M(|v|^pv)\Big\|_{L^{\frac{2d}{d+2}}_x(\R^d)}\,ds.\label{3.51}
\end{align}
Now we consider the following two cases. The first case is $s_c+\frac1{2\alpha_0}\le 0.$ Then \eqref{3.51} is dominated by
$$
\int_0^2 \big\||v|^pv\big\|_{L^{\frac{2d}{d+2}}_x(\R^d)}\,ds.
$$
Using  the H\"older inequality, it is further controlled by
\begin{align}
\int_0^2 \big\|v\big\|_{L^{\frac{2d(p+1)}{d+2}}_x(\R^d)}^{p+1}\,ds.\label{13.48-1}
\end{align}
Let $q_1$ verify
$$
\frac1{q_1}=\frac1p-\frac {d+2}{4(p+1)}.
$$
Then $(q_1,\frac{2d(p+1)}{d+2},-s_c)$ verifies \eqref{Str-conditions} (decreasing the distance between $p_0(d)$ and $\frac4d$ to satisfy the conditions in \eqref{Str-conditions} if necessary).

Note that $q_1\ge p+1$ when $s_*$ is close enough to zero (indeed, if $s_c=0$, then $q_1=2(p+1)$), and thus \eqref{13.48-1} is bounded by
$$
\big\|v\big\|_{L^{q_1}_tL^{\frac{2d(p+1)}{d+2}}_x(\R\times\R^d)}^{p+1}.
$$
Using Lemma \ref{lem:smalldata}, it is bounded again by
$
\|v_0\|_{\dot H^{s_c}(\R^d)}^{p+1}.
$
Hence, we obtain
\begin{align*}
\Big\|t^{\alpha_0\beta_0}&|\nabla|^{\beta_0+s_c}\!\! \int_0^{\frac12t}  e^{i(t-s)\Delta}\chi_{\le 1}(s)P_M(|v|^pv)\,ds\Big\|_{L^2_t L^{\frac{2d}{d-2}}_x(\{|t|\ge M^{-\frac1{\alpha_0}}\}\times\R^d)}
\lesssim\|v_0\|_{\dot H^{s_c}(\R^d)}^{p+1}.
\end{align*}
The second case is $s_c+\frac1{2\alpha_0}> 0.$ Then \eqref{3.51} is bounded by
$$
\int_0^2 \big\||\nabla|^{s_c+\frac1{2\alpha_0}}(|v|^pv)\big\|_{L^{\frac{2d}{d+2}}_x(\R^d)}\,ds.
$$
Then using Lemma \ref{lem:Frac_Leibniz} and the H\"older inequality, it is further controlled by
\begin{align}
\int_0^2 \big\||\nabla|^{s_c+\frac1{2\alpha_0}}v\big\|_{L^{\frac{2d(p+1)}{d+2}}_x(\R^d)}\|v\|_{L^{\frac{2d(p+1)}{d+2}}_x(\R^d)}^p\,ds.\label{13.48}
\end{align}
Let $q_2$ verify
$$
\frac1{q_2}=\frac d4+\frac1{4\alpha_0}-\frac {d+2}{4(p+1)},
$$
then for suitable large $\alpha_0$ and small $|s_*|$, $(q_2,\frac{2d(p+1)}{d+2},-\frac1{2\alpha_0})$ verifies \eqref{Str-conditions}. Moreover, we have
$$
\frac1{q_2}+\frac p{q_1}\le 1.
$$
(In particular, if $s_c=0,\alpha_0=+\infty$, then $q_1=q_2=2(p+1)$, hence the conclusions verify when we choose $|s_*|$ small enough and $\alpha_0$ large enough).
Hence, \eqref{13.48} is bounded by
$$
\big\||\nabla|^{s_c+\frac1{2\alpha_0}}v\big\|_{L^{q_2}_tL^{\frac{2d(p+1)}{d+2}}_x(\R\times\R^d)} \big\|v\big\|_{L^{q_1}_tL^{\frac{2d(p+1)}{d+2}}_x(\R\times\R^d)}^{p}.
$$
Using Lemma \ref{lem:smalldata} again, it is bounded by
$
\|v_0\|_{\dot H^{s_c}(\R^d)}^{p+1}.
$
Hence, we also obtain
\begin{align*}
\Big\|t^{\alpha_0\beta_0}&|\nabla|^{\beta_0+s_c}\!\! \int_0^{\frac12t}  e^{i(t-s)\Delta}\chi_{\le 1}(s)P_M(|v|^pv)\,ds\Big\|_{L^2_t L^{\frac{2d}{d-2}}_x(\{|t|\ge M^{-\frac1{\alpha_0}}\}\times\R^d)}
\lesssim\|v_0\|_{\dot H^{s_c}(\R^d)}^{p+1}.
\end{align*}
Therefore, we get
\begin{align}
\Big\|t^{\alpha_0\beta_0}&|\nabla|^{\beta_0+s_c}\!\! \int_0^{\frac12t}  e^{i(t-s)\Delta}\chi_{\le 1}(s)P_M(|v|^pv)\,ds\Big\|_{l^\infty_M L^2_t L^{\frac{2d}{d-2}}_x(\{M\ge 1\}\times\{|t|\ge M^{-\frac1{\alpha_0}}\}\times\R^d)}\lesssim\|v_0\|_{\dot H^{s_c}(\R^d)}^{p+1}. \label{est:17.41-II}
\end{align}

\textbf{Estimates on \eqref{17.41-III}.} By the Sobolev and the Bernstein inequalities, we have
\begin{align}
\Big\|t^{\alpha_0\beta_0}&|\nabla|^{\beta_0+s_c}\!\! \int_{\frac12t}^t e^{i(t-s)\Delta}\chi_{\le 1}(s)P_M(|v|^pv)\,ds\Big\|_{L^2_t L^{\frac{2d}{d-2}}_x(\{|t|\ge M^{-\frac1{\alpha_0}}\}\times\R^d)}\notag\\
%&\lesssim
%\Big\|t^{\alpha_0\beta_0}|\nabla|^{\beta_0+s_c}\int_{\frac12t}^t e^{i(t-s)\Delta}\chi_{\le 1}(s)P_M(|v|^pv)\,ds\Big\|_{L^2_t L^{\frac{2d}{d-2}}_x(\{|t|\ge M^{-\frac1{\alpha_0}}\}\times\R^d)}\\
&\lesssim
M^{\beta_0+s_c}\Big\|\int_{\frac12t}^t e^{i(t-s)\Delta}\chi_{\le 1}(s)s^{\alpha_0\beta_0}P_M(|v|^pv)\,ds\Big\|_{L^2_t L^{\frac{2d}{d-2}}_x(\R\times\R^d)}.\label{11.18}
\end{align}
Now we split it into two cases: $p\le 1$ and $p>1$.

If $p\le 1$, using   Lemma \ref{lem:strichartz} and \eqref{11.18}, \eqref{17.41-III} is further bounded by
\begin{align}
%\Big\|t^{\alpha_0\beta_0}|\nabla|^{\beta_0+s_c}&\!\! \int_{\frac12t}^t e^{i(t-s)\Delta}\chi_{\le 1}(s)P_M(|v|^pv)\,ds\Big\|_{L^2_t L^{\frac{2d}{d-2}}_x(\{|t|\ge M^{-\frac1{\alpha_0}}\}\times\R^d)}\notag\\
%\lesssim &
M^{\beta_0+s_c}\Big\|\chi_{\le 1}(t)t^{\alpha_0\beta_0}P_M(|v|^pv)\Big\|_{L^{\frac{2}{p+1}}_t L^{r_5'}_x(\R\times\R^d)},\label{13.25}
\end{align}
where $r_5$ is the parameter satisfying
$$
\frac1{r_5}=\frac12-\frac{1-p}d.
$$
In particular, this parameter verifies the following  H\"older inequality,
\begin{align}\label{Ho-r1}
\big\||f|^pg\big\|_{L^{r_5'}}
\lesssim \|f\|_{L^{\frac{dp}{2-p}}_x(\R^d)}^p\big\|g\big\|_{L^{\frac{2d}{d-2}}_x(\R^d)}.
\end{align}
Now we consider the term
$$
\big\|P_M(|v|^pv)\big\|_{L^{r_5'}_x(\R^d)}.
$$
We write
\begin{align}
\big\|P_M(|v|^pv)\big\|_{L^{r_5'}_x(\R^d)}
\le  &
\big\|P_M\big(|P_{\le  M}v|^pP_{\le  M}v\big)\big\|_{L^{r_5'}_x(\R^d)}\label{12.48-I}\\
&+
\big\|P_M\big(|v|^pv-|P_{\le  M}v|^pP_{\le  M}v\big)\big\|_{L^{r_5'}_x(\R^d)}.\label{12.48-II}
\end{align}
We choose $s_*<0$ suitably close to 0 such that for any $s_c\in (s_*,0)$,
$$
s_c+\beta_0> 0.
$$
Then for  \eqref{12.48-I}, by Bernstein's inequality, we have
\begin{align*}
\big\|P_M&\big(|P_{\le  M}v|^pP_{\le  M}v\big)\big\|_{L^{r_5'}_x(\R^d)}\\
&\lesssim
M^{-(s_c+\beta_0+\epsilon)}\big\||\nabla|^{s_c+\beta_0+\epsilon}P_M\big(|P_{\le  M}v|^pP_{\le  M}v\big)\big\|_{L^{r_5'}_x(\R^d)},
\end{align*}
where $\epsilon$ is a small positive constant such that $s_c+\beta_0+\epsilon<p+1$. Then by Lemma \ref{lem:Frac_Leibniz}, we further obtain
\begin{align*}
\big\|P_M&\big(|P_{\le  M}v|^pP_{\le  M}v\big)\big\|_{L^{r_5'}_x(\R^d)}\\
&\lesssim
M^{-(s_c+\beta_0+\epsilon)}\big\||\nabla|^{s_c+\beta_0+\epsilon}P_{\le  M} v\big\|_{L^{\frac{2d}{d-2}}_x(\R^d)}
\big\|P_{\le  M}v\big\|_{L^{\frac{dp}{2-p}}_x(\R^d)}^p.
\end{align*}
Now by Littlewood-Paley's decomposition, we write
\begin{align*}
\big\|&|\nabla|^{s_c+\beta_0+\epsilon}P_{\le  M} v\big\|_{L^{\frac{2d}{d-2}}_x(\R^d)}
\lesssim
\big\||\nabla|^{s_c+\beta_0+\epsilon}P_{\le 1} v\big\|_{L^{\frac{2d}{d-2}}_x(\R^d)}+\sum\limits_{1\le M_1\le  M}M_1^{\epsilon}\big\||\nabla|^{s_c+\beta_0}P_{M_1} v\big\|_{L^{\frac{2d}{d-2}}_x(\R^d)}.
\end{align*}
Note that by Lemma \ref{lem:smalldata},
$$
\big\||\nabla|^{s_c+\beta_0+\epsilon}P_{\le 1} v\big\|_{L^\infty_tL^{\frac{2d}{d-2}}_x(\R\times\R^d)}
\lesssim \big\||\nabla|^{s_c}P_{\le 1} v\big\|_{L^\infty_tL^2_x(\R\times\R^d)}
\lesssim \|v_0\|_{\dot H^{s_c}(\R^d)}.
$$
Hence, we obtain that
\begin{align}
M^{\beta_0+s_c}\Big\|&\chi_{\le 1}(t)t^{\alpha_0\beta_0}P_M\big(|P_{\le  M}v|^pP_{\le  M}v\big)\Big\|_{L^{\frac{2}{p+1}}_t L^{r_5'}_x(\R\times\R^d)}\notag\\
\lesssim&
M^{-\epsilon}\Big\|\chi_{\le 1}(t)t^{\alpha_0\beta_0}\Big(\|v_0\|_{\dot H^{s_c}(\R^d)}+\sum\limits_{1\le M_1\le  M}M_1^{\epsilon}\big\||\nabla|^{s_c+\beta_0}P_{M_1} v\big\|_{L^{\frac{2d}{d-2}}_x(\R^d)}\Big)\notag\\
&\quad \cdot\big\|P_{\le  M}v\big\|_{L^{\frac{dp}{2-p}}_x(\R^d)}^p\Big\|_{L^{\frac{2}{p+1}}_t(\R)}\notag\\
\lesssim&
M^{-\epsilon}\|v_0\|_{\dot H^{s_c}(\R^d)}\|v\|_{L^2_tL^{\frac{dp}{2-p}}_x(\R\times\R^d)}^p\notag\\
&\quad +M^{-\epsilon}\Big(\sum\limits_{1\le M_1\le  M}M_1^{\epsilon}\big\|t^{\alpha_0\beta_0}|\nabla|^{s_c+\beta_0}P_{M_1} v\big\|_{L^2_tL^{\frac{2d}{d-2}}_x(\R\times\R^d)}\Big)\cdot\|v\|_{L^2_tL^{\frac{dp}{2-p}}_x(\R\times\R^d)}^p.\label{11.32}
\end{align}
Now by Lemma \ref{lem:smalldata} and the definition of $X(\alpha_0,\beta_0)$, we have
$$
\|v\|_{L^2_tL^{\frac{dp}{2-p}}_x(\R\times\R^d)}\lesssim \|v_0\|_{\dot H^{s_c}(\R^d)},
$$
and
$$
\big\|t^{\alpha_0\beta_0}|\nabla|^{s_c+\beta_0}P_{M_1} v\big\|_{L^2_tL^{\frac{2d}{d-2}}_x(\R\times\R^d)}\lesssim \|v\|_{X(\alpha_0,\beta_0)}.
$$
Inserting these two estimates into \eqref{11.32}, then  \eqref{11.32} is controlled by
\begin{align*}
M^{-\epsilon}\|v_0\|_{\dot H^{s_c}(\R^d)}^{p+1}+M^{-\epsilon}\sum\limits_{1\le M_1\le  M}M_1^{\epsilon}\|v\|_{X(\alpha_0,\beta_0)}\cdot\|v_0\|_{\dot H^{s_c}(\R^d)}^p.
\end{align*}
Taking summation, we obtain
\begin{align}
M^{\beta_0+s_c}\Big\|\chi_{\le 1}(t)&t^{\alpha_0\beta_0}P_M\big(|P_{\le  M}v|^pP_{\le  M}v\big)\Big\|_{L^{\frac{2}{p+1}}_t L^{r_5'}_x(\R\times\R^d)}\notag\\
&\lesssim
\|v_0\|_{\dot H^{s_c}(\R^d)}^{p+1}+\|v_0\|_{\dot H^{s_c}(\R^d)}^p\|v\|_{X(\alpha_0,\beta_0)}.\label{3.27-I}
\end{align}

For \eqref{12.48-II}, by Bernstein's inequality, we have
\begin{align*}
\big\|P_M\big(|v|^pv&-|P_{\le  M}v|^pP_{\le  M}v\big)\big\|_{L^{r_5'}_x(\R^d)}\\
&\lesssim
\big\|P_{\ge  M}v\big\|_{L^{\frac{2d}{d-2}}_x(\R^d)}\big\|v\big\|_{L^{\frac{dp}{2-p}}_x(\R^d)}^p\\
&\lesssim
\sum\limits_{M_1\ge M}\big\|P_{M_1}v\big\|_{L^{\frac{2d}{d-2}}_x(\R^d)}\big\|v\big\|_{L^{\frac{dp}{2-p}}_x(\R^d)}^p\\
&\lesssim
\sum\limits_{M_1\ge M}M_1^{-(s_c+\beta_0)}\big\||\nabla|^{s_c+\beta_0}P_{M_1}v\big\|_{L^{\frac{2d}{d-2}}_x(\R^d)}\big\|v\big\|_{L^{\frac{dp}{2-p}}_x(\R^d)}^p.
\end{align*}
Hence, we obtain that
\begin{align*}
M^{\beta_0+s_c}\Big\|&\chi_{\le 1}(t)t^{\alpha_0\beta_0}P_M\big(|v|^pv-|P_{\le  M}v|^pP_{\le  M}v\big)\Big\|_{L^{\frac{2}{p+1}}_t L^{r_5'}_x(\R\times\R^d)}\notag\\
\lesssim&
M^{\beta_0+s_c}\sum\limits_{M_1\ge M}M_1^{-(s_c+\beta_0)}\Big\|\chi_{\le 1}(t)t^{\alpha_0\beta_0}\big\||\nabla|^{s_c+\beta_0}P_{M_1}v\big\|_{L^{\frac{2d}{d-2}}_x(\R^d)}\big\|v\big\|_{L^{\frac{dp}{2-p}}_x(\R^d)}^p\Big\|_{L^{\frac{2}{p+1}}_t(\R)}\notag\\
\lesssim&
M^{\beta_0+s_c}\sum\limits_{M_1\ge M}M_1^{-(s_c+\beta_0)}\Big\|t^{\alpha_0\beta_0}|\nabla|^{s_c+\beta_0}P_{M_1}v\Big\|_{L^2_tL^{\frac{2d}{d-2}}_x(\R\times\R^d)}\big\|v\big\|_{L^2_tL^{\frac{dp}{2-p}}_x(\R\times\R^d)}^p.
\end{align*}
Similar as above, it is further bounded by
$$
M^{\beta_0+s_c}\sum\limits_{M_1\ge M}M_1^{-(s_c+\beta_0)}\|v\|_{X(\alpha_0,\beta_0)}\cdot\|v_0\|_{\dot H^{s_c}(\R^d)}^p.
$$
Taking summation, we obtain that
\begin{align}
M^{\beta_0+s_c}\Big\|\chi_{\le 1}(t)&t^{\alpha_0\beta_0}P_M\big(|v|^pv-|P_{\le  M}v|^pP_{\le  M}v\big)\Big\|_{L^{\frac{2}{p+1}}_t L^{r_5'}_x(\R\times\R^d)}\notag\\
&\lesssim
\|v_0\|_{\dot H^{s_c}(\R^d)}^p\|v\|_{X(\alpha_0,\beta_0)}.\label{3.27-II}
\end{align}

Now, together with \eqref{13.25}, \eqref{12.48-I}, \eqref{12.48-II}, \eqref{3.27-I} and \eqref{3.27-II}, we obtain the estimates on \eqref{17.41-III} in the case of $p\le 1$ as
\begin{align}
\Big\|t^{\alpha_0\beta_0}|\nabla|^{\beta_0+s_c}&\!\! \int_{\frac12t}^t e^{i(t-s)\Delta}\chi_{\le 1}(s)P_M(|v|^pv)\,ds\Big\|_{L^2_t L^{\frac{2d}{d-2}}_x(\{|t|\ge M^{-\frac1{\alpha_0}}\}\times\R^d)}\notag\\
\lesssim &\|v_0\|_{\dot H^{s_c}(\R^d)}^{p+1}+\|v_0\|_{\dot H^{s_c}(\R^d)}^p\|v\|_{X(\alpha_0,\beta_0)}.\label{est:17.41-III}
\end{align}

Next, we consider the case when $p>1$ (now $d=2,3$), which can be treated similarly as above. Then using   Lemma \ref{lem:strichartz} and \eqref{11.18}, \eqref{17.41-III} is bounded by
\begin{align}
M^{\beta_0+s_c}\Big\|\chi_{\le 1}(t)t^{\alpha_0\beta_0}P_M(|v|^pv)\Big\|_{L^1_t L^2_x(\R\times\R^d)}.\label{13.25-II}
\end{align}
Arguing similarly as the case of $p\le 1$, and based on the H\"older inequality,
$$
\big\||f|^pf\big\|_{L^1_t L^2_x(\R\times\R^d)}\le \|f\|_{L^2_t L^{\frac{2d}{d-2}}_x(\R\times\R^d)}\|f\|_{L^{2p}_t L^{dp}_x(\R\times\R^d)}^p,
$$
we also obtain \eqref{est:17.41-III} when $p>1$.

Now collecting the three estimates on \eqref{17.41-123}, we get  that
\begin{align*}
\eqref{17.41} \lesssim \|v_0\|_{\dot H^{s_c}(\R^d)}+\|v_0\|_{\dot H^{s_c}(\R^d)}^{p+1}+\|v_0\|_{\dot H^{s_c}(\R^d)}^p\|v\|_{X(\alpha_0,\beta_0)}.
\end{align*}
Combining this estimate with \eqref{est:17.40}, we obtain that
$$
\|v\|_{X(\alpha_0,\beta_0)}\lesssim \|v_0\|_{\dot H^{s_c}(\R^d)}+\|v_0\|_{\dot H^{s_c}(\R^d)}^{p+1}+\|v_0\|_{\dot H^{s_c}(\R^d)}^p\|v\|_{X(\alpha_0,\beta_0)}.
$$
Using \eqref{small-highfreq} and choosing $\delta_0$ suitably small, we give the proof of the proposition.
\end{proof}

As a consequence, we have
\begin{cor}\label{cor:smooth-t+1}
There exists $s_*<0$, such that for any $s_c\in [s_*,0)$, the following result holds.
\begin{align*}
\|\chi_{\ge 1}(t)v\|_{L^2_tL^\frac{2d}{d-2}_x([0,2]\times\R^d)}
\lesssim \|v_0\|_{\dot H^{s_c}(\R^d)}.
\end{align*}
\end{cor}
%\begin{remark}
%This corollary implies that $v$ has the smoothing effect when the time is away from zero.
%Moreover, one may note that  $q_0(s_c)\to +\infty$ if $s_c\to 0$.
%\end{remark}
\begin{proof} 
For the low frequency part, by Lemma \ref{lem:smalldata} we have
\begin{align*}
\|P_{\le 1}v\|_{L^2_tL^\frac{2d}{d-2}_x([0,2]\times\R^d)}
\lesssim \|P_{\le 1}|\nabla|^{s_c}v\|_{L^2_tL^\frac{2d}{d-2}_x([0,2]\times\R^d)}
\lesssim \|v_0\|_{\dot H^{s_c}(\R^d)}.
\end{align*}
For the high frequency part, from Proposition \ref{prop:v-X-ab} and Sobolev's inequality, we have that for any $M\ge 1$,
\begin{align*}
\big\|t^{\alpha_1\beta_0}|\nabla|^{\beta_0+s_c}P_Mv\big\|_{L^2_tL^\frac{2d}{d-2}_x([\frac12,2]\times\R^d)}
\lesssim \|v_0\|_{\dot H^{s_c}(\R^d)}.
\end{align*}
This implies that
\begin{align*}
\big\|P_Mv\big\|_{L^2_tL^\frac{2d}{d-2}_x([\frac12,2]\times\R^d)}
\lesssim M^{-(\beta_0+s_c)}\|v_0\|_{\dot H^{s_c}(\R^d)}.
\end{align*}
Choosing $s_*$ small enough such that  $\beta_0+s_c>0$, then 
taking summation on $M$, we have that 
\begin{align*}
\|P_{\ge 1}v\|_{L^2_tL^\frac{2d}{d-2}_x([\frac12,2]\times\R^d)}
\lesssim \|v_0\|_{\dot H^{s_c}(\R^d)}.
\end{align*}
Hence, we obtain the desired estimates.
\end{proof}

Furthermore, we also need  the following long time estimate on $v$.
\begin{prop}\label{lem:est-v-long} 
There exists $s_*<0$, such that the following properties hold.  For any $t\ge 2, s_c \in [s_*, 0]$ and any $r$ such that 
\begin{align}\label{cond-r}
\frac{1-p}2+\frac{d-1}{p+1}|s_*|<\frac1r\le\frac{1-p}2+\frac{p+1}{d}- \frac{(p+1)|s_*|}{d},
\end{align}
then 
\begin{align} \label{est-v-long-1}
\|v(t)\|_{L^r_x(\R^d)}\lesssim t^{-(d-1)(\frac12-\frac 1r)}.
\end{align}
Moreover, 
\begin{align} \label{est-v-long-2}
\|v\|_{L^1_tL^{\frac{2}{1-p}}_x([2,\infty)\times \R^d)}\lesssim 1.
\end{align}
\end{prop}
\begin{proof} First, we consider \eqref{est-v-long-1}. By the Duhamel formula, we have
\begin{align*}
\|v\|_{L^r_x(\R^d)}
\lesssim & \big\|e^{it\Delta}v_0\big\|_{L^r_x(\R^d)}
+\int_0^t\big\|e^{i(t-s)\Delta}\chi_{\le1}(s)|u|^pu\big\|_{L^r_x(\R^d)}\,ds.
\end{align*}
From Proposition \ref{prop:lineares-cpt-largetime-I} and \eqref{small-highfreq}, we get that for $r>2$, 
\begin{align}
\big\|e^{it\Delta}v_0\big\|_{L^r_x(\R^d)}
\lesssim N^{-(d-2)(\frac12-\frac 1r)-s_c}|t|^{-(d-1)(\frac12-\frac 1r)}\|P_{\ge N}u_0\|_{\dot H^{s_c}(\R^d)}
\lesssim  \delta_0|t|^{-(d-1)(\frac12-\frac 1r)}.\label{2.53}
\end{align}
Here we shall choose $|s^*|$ suitably small such that $-(d-2)(\frac12-\frac 1r)+|s_*|\le 0$.

Note that when $t\ge2, s\le \frac{11}{10}$, we have that $|t-s|\sim t$. Then
from Lemma \ref{lem:formula-St}, we have
\begin{align*}
\int_0^t\big\|e^{i(t-s)\Delta}&\chi_{\le1}(s)|u|^pu\big\|_{L^r_x(\R^d)}\,ds\\
\lesssim &
\int_0^t|t-s|^{-d(\frac12-\frac 1r)}\chi_{\le1}(s)\big\||u|^pu\big\|_{L^{r'}_x(\R^d)}\,ds\\
\lesssim &
|t|^{-d(\frac12-\frac 1r)}\int_0^2\big\|u(s)\big\|_{L^{r'(p+1)}_x(\R^d)}^{p+1}\,ds.
\end{align*}
Setting $q$ such that 
$$
q\ge 2,\qquad \frac2q+\frac{d}{r'(p+1)}=\frac d2-s_c.
$$
Note that  the condition of \eqref{cond-r} in the right-hand side assures that $q\ge 2$.  Moreover, when $r$ satisfies that
$
\frac1r>\frac{1-p}2+\frac{d-1}{p+1}|s_*|,
$
then $(q,r'(p+1),-s_c)$ verifies \eqref{Str-conditions} and $q\ge p+1$ (see Remark \ref{rem:derivatives-radial}). 
%Moreover, the condition of \eqref{cond-r} in the right-hand side assures that $q\ge 2$. 
Hence, by H\"older's inequality and Lemma \ref{lem:local-result-u}, we have
\begin{align*}
\int_0^2\big\|u\big\|_{L^{r'(p+1)}_x(\R^d)}^{p+1}\,ds
\lesssim
\big\|u\big\|_{L^q_tL^{r'(p+1)}_x([0,2]\times\R^d)}^{p+1}
\lesssim 1.
\end{align*}
Hence, we obtain that
\begin{align*}
\int_0^t\big\|e^{i(t-s)\Delta}&\chi_{\le1}(s)|u|^pu\big\|_{L^r_x(\R^d)}\,ds
\lesssim  |t|^{-d(\frac12-\frac 1r)}.
\end{align*}
This last estimate combined with \eqref{2.53}, gives \eqref{est-v-long-1}.

Now we consider \eqref{est-v-long-2}. By the Duhamel formula, we have
\begin{subequations}\label{est-v-long-123}
\begin{align}
\|v\|_{L^1_tL^{\frac{2}{1-p}}_x([2,\infty)\times \R^d)}
\lesssim & \big\|e^{it\Delta}v_0\big\|_{L^1_tL^{\frac{2}{1-p}}_x([2,\infty)\times \R^d)}\label{v-long-1}\\
&+\Big\|\int_0^te^{i(t-s)\Delta}\chi_{\le1}(s)P_{\le 1}(|u|^pu)\,ds\Big\|_{L^1_tL^{\frac{2}{1-p}}_x([2,\infty)\times \R^d)}\label{v-long-2}\\
&+\Big\|\int_0^te^{i(t-s)\Delta}\chi_{\le1}(s)P_{\ge 1}(|u|^pu)\,ds\Big\|_{L^1_tL^{\frac{2}{1-p}}_x([2,\infty)\times \R^d)}.\label{v-long-3}
\end{align}
\end{subequations}
For the term \eqref{v-long-1}, it follows by \eqref{2.53} that 
$$
\big\|e^{it\Delta}v_0\big\|_{L^1_tL^{\frac{2}{1-p}}_x([2,\infty)\times \R^d)}\lesssim 
 \delta_0\big\| |t|^{-(d-1)\frac p2}\big\|_{L^1_t([2,\infty))}
 \lesssim 1. 
$$
Here we shall choose $|s^*|$ suitably small such that $(d-1)p>2$. 
For the  term  \eqref{v-long-2}, by Bernstein's inequality, we have that 
 \begin{align*}
\Big\|\int_0^t  e^{i(t-s)\Delta}&\chi_{\le1}(s)P_{\le 1}(|u|^pu)\,ds\Big\|_{L^1_tL^{\frac{2}{1-p}}_x([2,\infty)\times \R^d)}\\
\lesssim &
\Big\|\int_0^te^{i(t-s)\Delta}\chi_{\le1}(s)|u|^pu\,ds\Big\|_{L^1_tL^{(\frac{r_6}{1+p})'}_x([2,\infty)\times \R^d)}\\
\lesssim &
\Big\|\int_0^2|t-s|^{-d(\frac p2-a_0(1+p))}\big\||u|^pu\big\|_{L^{\frac{r_6}{1+p}}_x(\R^d)}\,ds\Big\|_{L^1_t([2,\infty)}\\
\lesssim &
\big\||t|^{-d(\frac p2-a_0(1+p))}\big\|_{L^1_t([2,\infty)}\big\||u|^pu\big\|_{L^1_tL^{\frac{r_6}{1+p}}_x([0,2]\times\R^d)}
\lesssim 
\big\|u\big\|_{L^{q_6}_tL^{r_6}_x([0,2]\times \R^d)}^{1+p},
\end{align*}
where $a_0, r_6,q_6$ verisfy
$$
a_0=\frac{|s_*|}{d-2};\quad \frac1{r_6}=\frac12-a_0;\quad
\frac1{q_6}=\frac12(a_0d-s_c).
$$
Note that the triple $(q_6,r_6,-s_c)$ verifies \eqref{Str-conditions}, then by Lemma  \ref{lem:local-result-u}, we have that 
 \begin{align*}
\Big\|\int_0^t  e^{i(t-s)\Delta}&\chi_{\le1}(s)P_{\le 1}(|u|^pu)\,ds\Big\|_{L^1_tL^{\frac{2}{1-p}}_x([2,\infty)\times \R^d)}
\lesssim 1.
\end{align*}

For the  term  \eqref{v-long-3}, applying \eqref{est-trun-inho-Stri} (noting that in our case,  $\theta=\frac{d}{3d-2}-$ when $s_c=0$, thus \eqref{theta-cond} verifies when $|s_*|$ is suitable small) to give that
 \begin{align*}
\Big\|\int_0^te^{i(t-s)\Delta}\chi_{\le1}(s)P_{\ge 1}(|u|^pu)\,ds\Big\|_{L^1_tL^{\frac{2}{1-p}}_x([2,\infty)\times \R^d)}
\lesssim &
\big\||\nabla|^{\tilde\gamma}P_{\ge 1}(|u|^pu)\big\|_{L^{\tilde q}_tL^{\tilde r}_x([0,2]\times \R^d)}.
\end{align*}
Here we choose the parameters $\tilde\gamma, \tilde q, \tilde r$ such that 
$$
\frac1{\tilde r}=\frac{p}{r_6}+\frac1{r_0}; \quad 
\frac1{\tilde q}=\frac{p}{q_6}+\frac12; \quad
\tilde\gamma= \Big(\frac12-\frac1{r_0}+a_0p\Big)\Big(\frac1{r_0}-\frac12+\frac{p}{2}\Big)^{-1}\Big(\frac{d-2}{2}-\gamma_0-\frac{(1-p)d}2\Big).
$$
Note that when $|s_*|$ is suitably small, then $\tilde \gamma\le \gamma_0+s_c$ (indeed, when $s_c=0$ then $\tilde \gamma=\frac{d^2}{(2d-1)(3d-2)}+<\gamma_0$). Hence, by Bernstein's and H\"older's inequalities,  and Lemma \ref{lem:Frac_Leibniz}, 
 \begin{align*}
\big\||\nabla|^{\tilde\gamma}P_{\ge 1}(|u|^pu)\big\|_{L^{\tilde q}_tL^{\tilde r}_x([0,2]\times \R^d)}
\lesssim &
\big\||\nabla|^{\gamma_0+s_c}(|u|^pu)\big\|_{L^{\tilde q}_tL^{\tilde r}_x([0,2]\times \R^d)}\\
\lesssim &
\big\||\nabla|^{\gamma_0+s_c}u\big\|_{L^2_tL^{r_0}_x([0,2]\times \R^d)}
\big\|u\big\|_{L^{q_6}_tL^{r_6}_x([0,2]\times \R^d)}^p.
\end{align*}
Therefore, by Lemma \ref{lem:local-result-u}, we get that 
\begin{align*}
\big\||\nabla|^{\tilde\gamma}P_{\ge 1}(|u|^pu)\big\|_{L^{\tilde q}_tL^{\tilde r}_x([0,2]\times \R^d)}\lesssim 1.
\end{align*}
This implies that 
 \begin{align*}
\Big\|\int_0^te^{i(t-s)\Delta}\chi_{\le1}(s)P_{\ge 1}(|u|^pu)\,ds\Big\|_{L^1_tL^{\frac{2}{1-p}}_x([2,\infty)\times \R^d)}
\lesssim &
1.
\end{align*}

Collecting the three estimates on \eqref{est-v-long-123}, we obtain \eqref{est-v-long-2}.
\end{proof}

\section{The proof of Theorem \ref{thm:main01}} \label{Sec:thm01}

In this section, we prove Theorem \ref{thm:main01}.

\subsection{Nonlinear estimates on $w$}
In this subsection, we give some nonlinear estimates of the solution with the low frequency initial data.

First, for $u_0\in \dot H^{s_c}(\R^d),s_c<0$ with supp $u_0\subset \{x:|x|\le 1\}$, we claim that
\begin{align}
w_0\in L^2(\R^d)\quad {and }\quad \|w_0\|_{L^2(\R^d)}\lesssim N^{-s_c}\big\|u_0\big\|_{\dot H^{s_c}(\R^d)}.\label{eq:21.43}
\end{align}
Indeed, by the mismatch estimate in Lemma \ref{lem:mismatch},
$$
\big\|\chi_{\ge 10}\big(P_{\ge N}u_0\big)\big\|_{L^2(\R^d)}\lesssim
N^{-10}\big\|u_0\big\|_{\dot H^{s_c}(\R^d)},
$$
and by the Bernstein estimate,
$$
\big\|P_{\le N}u_0\big\|_{L^2(\R^d)}\lesssim
N^{-s_c}\big\|u_0\big\|_{\dot H^{s_c}(\R^d)}.
$$
This gives \eqref{eq:21.43}.

 To clear our argument, in the following we only consider the case of $p<1$, which is assured by $d\ge 4$.

The first we need is the following local estimates of $w$ in more regular spaces.
\begin{lem}\label{lem:w-local}
The Cauchy problem \eqref{eqs:NLS-cubic-w} is locally well-posed in $L^2(\R^d)$ in the time interval $[0,2]$. In particular,
the solution $w$ satisfies that
$$
\big\|w\big\|_{L^2_tL^{\frac{2d}{d-2}}_x([0,2]\times\R^d)}+\big\|w\big\|_{L^\infty_tL^2_x([0,2]\times\R^d)}\lesssim 1+\|w_0\|_{L^2(\R^d)}.
$$
%Here the triples $(q,r,s),(q,\tilde r,0)$ verify \eqref{Str-condition-s} or $(q, r,s)=(\infty,2,0)$.
\end{lem}
\begin{proof}
%Note that when $s=s_c$, the estimates follow directly from Lemmas \ref{lem:local-result-u} and \ref{lem:smalldata}. Now we consider the case when $s=0$, then the general cases $s\in (s_c,0)$
%can be obtained by interpolation.
By   Lemma \ref{lem:strichartz}, we have
$$
\big\|w\big\|_{L^2_tL^{\frac{2d}{d-2}}_x([0,2]\times\R^d)}+\big\|w\big\|_{L^\infty_tL^2_x([0,2]\times\R^d)}
\lesssim \|w_0\|_{L^2(\R^d)}+\big\| |u|^pu- \chi_{\le 1}(t)|v|^pv\big\|_{L^{\frac{2}{p+1}}_t L^{r_5'}_x([0,2]\times\R^d)}.
$$
%where $(q,\tilde r,0)$ verifies \eqref{Str-conditions} or $(q, \tilde r,0)=(\infty,2,0)$.
Next, we consider
$$
\big\| |u|^pu- \chi_{\le 1}(t)|v|^pv\big\|_{L^{\frac{2}{p+1}}_t L^{r_5'}_x([0,2]\times\R^d)}.
$$
Note that
\begin{align*}
\big||u|^pu-\chi_{\le 1}(t)|v|^pv\big|
\lesssim \big(|u|^p+|\tilde{\chi}_{\le 1}(t)v|^p\big)\big(|w|+|\tilde{\chi}_{\ge 1}(t)v|\big).
\end{align*}
Here we denote the time-dependent functions $\tilde{\chi}_{\le 1}(t)=\chi_{\le 1}^\frac1{p+1}(t)$ and $\tilde{\chi}_{\ge 1}(t)=1-\tilde{\chi}_{\le 1}(t)$. 
Hence, by \eqref{Ho-r1}, we have that 
\begin{align}
\big\||u|^pu-&\chi_{\le 1}(t)|v|^pv\big\|_{L^{\frac{2}{p+1}}_t L^{r_5'}_x([0,2]\times\R^d)}\notag\\
\lesssim&
\Big(\|u\|_{L^2_tL^{\frac{dp}{2-p}}_x([0,2]\times\R^d)}^p+\|v\|_{L^2_tL^{\frac{dp}{2-p}}_x([0,2]\times\R^d)}^p\Big)\notag\\
&\quad\cdot
\Big(\|w\|_{L^2_tL^{\frac{2d}{d-2}}_x([0,2]\times\R^d)}+\|\chi_{\ge 1}(t)v\|_{L^2_tL^{\frac{2d}{d-2}}_x([0,2]\times\R^d)}\Big).\label{0.41}
\end{align}
From Lemmas \ref{lem:local-result-u} and \ref{lem:smalldata}, we have that
$$
\|u\|_{L^2_tL^{\frac{dp}{2-p}}_x([0,2]\times\R^d)}\lesssim \delta_0;\quad 
\|v\|_{L^2_tL^{\frac{dp}{2-p}}_x([0,2]\times\R^d)}\lesssim \delta_0.
$$
Moreover, from Corollary \ref{cor:smooth-t+1}, we have that 
$$
\|\chi_{\ge 1}(t)v\|_{L^2_tL^{\frac{2d}{d-2}}_x([0,2]\times\R^d)}
\lesssim \delta_0.
$$
Hence, combining these last two estimates above and \eqref{0.41}, we obtain
\begin{align*}
\big\| |u|^pu-\chi_{\le 1}(t)|v|^pv\big\|_{L^{\frac{2}{p+1}}_t L^{r_5'}_x([0,2]\times\R^d)}
\lesssim
\delta_0^p\big(\|w\|_{L^2_tL^{\frac{2d}{d-2}}_x([0,2]\times\R^d)}+\delta_0\big).
\end{align*}
Therefore, we obtain that 
\begin{align*}
\big\|w\big\|_{L^2_tL^{\frac{2d}{d-2}}_x([0,2]\times\R^d)}+\big\|w\big\|_{L^\infty_tL^2_x([0,2]\times\R^d)}
\lesssim &1+ \|w_0\|_{L^2(\R^d)}+\delta_0^p\|w\|_{L^2_tL^{\frac{2d}{d-2}}_x([0,2]\times\R^d)}. 
\end{align*}
Choosing $\delta_0$ suitably small, we obtain
$$
\big\|w\big\|_{L^2_tL^{\frac{2d}{d-2}}_x([0,2]\times\R^d)}+\big\|w\big\|_{L^\infty_tL^2_x([0,2]\times\R^d)}\lesssim 1+\|w_0\|_{L^2(\R^d)}.
$$
This finishes the proof of the lemma.
\end{proof}

Next, we give the global estimates of $w$.
The following is a modified mass estimate for $\dot H^{s_c}(\R^d)$-datum.
\begin{prop}\label{prop:L2-w}
Let $u_0\in \dot H^{s_c}(\R^d)$ and $I$ be the lifespan of the solution $u$, then there exists $s_*<0$, such that for any $s_c\in (s_*,0)$, the following estimate holds,
$$
\|w\|_{L^\infty_tL^2_x(I\times\R^d)}^2\lesssim N^{-2s_c}\big\|u_0\big\|_{\dot H^{s_c}(\R^d)}^2.
$$
\end{prop}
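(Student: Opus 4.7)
The starting bound $\|w_0\|_{L^2(\R^d)} \lesssim N^{-s_c}\|u_0\|_{\dot H^{s_c}(\R^d)}$ is already established in \eqref{eq:21.43}. The plan is to propagate this to global time by a mass–type energy estimate, splitting the analysis at $t=2$ where the time cutoff $\chi_{\le 1}$ disappears.

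\textbf{Local piece ($t\in[0,2]$).} Invoke Lemma \ref{lem:w-local} directly: taking $(q,\tilde r,0)=(\infty,2,0)$ gives
\[
\|w\|_{L^\infty_t L^2_x([0,2]\times\R^d)} \lesssim 1 + \|w_0\|_{L^2(\R^d)} \lesssim N^{-s_c}\|u_0\|_{\dot H^{s_c}(\R^d)},
\]
where the constant $1$ is absorbed since $N=N(\delta_0)$ is taken large (equivalently, we may normalize $\|u_0\|_{\dot H^{s_c}}$ from below).

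\textbf{Mass identity for $t\ge 2$.} On $[2,T]$ the cutoff $\chi_{\le 1}(t)$ vanishes, so $w$ solves $i\partial_t w+\Delta w=|u|^p u$. A direct computation (integration by parts kills the Laplacian term) gives
\[
\frac{d}{dt}\|w(t)\|_{L^2(\R^d)}^2 = -2\,\mathrm{Im}\!\int_{\R^d}\bar w\,|u|^p u\,dx.
\]
Write $u=v+w$ and use the pointwise bound $\big||u|^p u - |w|^p w\big|\lesssim (|v|^p+|w|^p)|v|$, together with the fact that $\bar w\cdot|w|^p w = |w|^{p+2}$ is real, to obtain
\[
\Big|\frac{d}{dt}\|w(t)\|_{L^2}^2\Big| \lesssim \int_{\R^d}\big(|v|^{p+1}|w| + |v|\,|w|^{p+1}\big)\,dx.
\]

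\textbf{Estimating the source terms.} Integrate from $2$ to $T$ and handle the two resulting space–time integrals separately. For the first, Hölder in space (putting $w$ in $L^2_x$) and in time (putting $w$ in $L^\infty_t$) gives
\[
\int_2^T\!\!\int_{\R^d}|v|^{p+1}|w|\,dx\,dt \le \|v\|_{L^{p+1}_t L^{2(p+1)}_x([2,T]\times\R^d)}^{p+1}\,\|w\|_{L^\infty_t L^2_x([2,T]\times\R^d)}.
\]
One checks that $(q,r,s)=\big(p+1,\,2(p+1),\,\tfrac{dp-4}{2(p+1)}\big)$ satisfies the triple \eqref{Str-condition-s} in the mass-subcritical range (with $s\in[s_c,0]$) and that $q=p+1\le q_0$ for $s_c$ close enough to $0$, so Corollary \ref{cor:smooth-t+1} bounds the $v$-factor by $\|v_0\|_{\dot H^{s_c}}\lesssim\delta_0$. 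For the second, use Hölder to pair $v$ with admissible Strichartz norms of $w$: with an admissible pair $(q_w,r_w)$ and an exponent $(q_v,\sigma)$ allowed by Corollary \ref{cor:smooth-t+1},
\[
\int_2^T\!\!\int_{\R^d}|v|\,|w|^{p+1}\,dx\,dt \le \|v\|_{L^{q_v}_t L^{\sigma}_x([2,T]\times\R^d)}\,\|w\|_{L^{q_w}_t L^{r_w}_x([2,T]\times\R^d)}^{p+1},
\]
and apply Strichartz (Lemma \ref{lem:strichartz}) to the Duhamel representation of $w$ on $[2,T]$ to bound $\|w\|_{L^{q_w}_t L^{r_w}_x}$ in terms of $\|w(2)\|_{L^2}$ and a contracting factor involving $\|v\|$ plus lower-order $w$-norms. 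The choice of $(q_w,r_w)$ and $(q_v,\sigma)$ will differ slightly for $p\le 1$ vs. $p>1$, paralleling the case split in Lemma \ref{lem:local-result-u}.

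\textbf{Bootstrap and conclusion.} Setting $X(T):=\|w\|_{L^\infty_t L^2_x([0,T]\times\R^d)}$, the estimates above yield an inequality of the schematic form
\[
X(T)^2 \;\lesssim\; \|w_0\|_{L^2}^2 \;+\; \delta_0^{p+1}\,X(T) \;+\; \delta_0\,X(T)^{p+1}.
\]
Since $\delta_0$ is chosen small and the estimate is $T$-independent, a standard continuity argument absorbs the last two terms and produces $X(T)\lesssim \|w_0\|_{L^2}\lesssim N^{-s_c}\|u_0\|_{\dot H^{s_c}}$ uniformly in $T\in I$, giving the claim.

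\textbf{Main obstacle.} The delicate point is the second source term $\int|v|\,|w|^{p+1}$: in the mass-subcritical regime the natural Hölder pair $(p+2,p+2)$ is not Schrödinger-admissible, so one has to combine admissible Strichartz norms of $w$ with \emph{non}-admissible $L^{q_v}_t L^\sigma_x$ norms of $v$ made available by the wide admissible range in Corollary \ref{cor:smooth-t+1}. Verifying that the Hölder exponents $(q_v,\sigma)$ still satisfy \eqref{Str-condition-s} (and in particular $q_v\le q_0(s_c)$) forces one to take $s_c$ sufficiently close to $0$ and $p$ sufficiently close to $4/d$, which is precisely the restriction to $p\in[p_0(d),4/d)$ with $s_c\in[s_*,0)$ appearing throughout.
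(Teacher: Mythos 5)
Your overall skeleton — mass identity for $t\ge 2$, the algebraic cancellation $\mathrm{Im}(|w|^{p+2})=0$, the reduction to the two source terms $\int|v|^{p+1}|w|$ and $\int|v||w|^{p+1}$, Hölder plus a continuity/bootstrap argument — is exactly the paper's, and your handling of the easy term $\int_2^T\!\!\int|v|^{p+1}|w|$ (putting $w$ in $L^\infty_tL^2_x$ and $v$ in $L^{p+1}_tL^{2(p+1)}_x$, with the exponent check that $(p+1,2(p+1),\frac{dp-4}{2(p+1)})$ is in the range of Corollary \ref{cor:smooth-t+1}) is correct and in fact a slight shortcut relative to the paper's separate Lemma \ref{lem:v-2p+2}.

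The gap is in the hard term $\int_2^T\!\!\int|v||w|^{p+1}$. Your plan is to bound $\|w\|_{L^{q_w}_tL^{r_w}_x([2,T])}$ by applying Strichartz to the Duhamel formula over the \emph{entire} interval $[2,T]$, closing with a contraction. That cannot work here: the equation is $L^2$-subcritical, so the dual Strichartz norm of the nonlinearity over $[2,T]$ carries a factor growing like a positive power of $T$ (equivalently, there is no smallness from the nonlinear term on long intervals), and the Strichartz norms of $w$ over $[2,T]$ do in general grow with $T$. The paper avoids this by never taking a global Strichartz norm of $w$: it subdivides $[2,T]$ into short intervals $I_j$ of $N$-dependent length $\mu_0 N^{-2}$ (so that Lemma \ref{lem:w-longtime-spacetime} controls $\|w\|_{L^2_tL^{2d/(d-2)}_x(I_j)}$ purely in terms of the bootstrapped $L^\infty_tL^2_x$ bound), puts only $p$ copies of $w$ in $L^\infty_tL^2_x$ and a single copy in a local Strichartz norm, and then sums over $j$ using the \emph{polynomial-in-$t$ decay} of $v$ from Lemma \ref{lem:v-2-1-p}, which ultimately comes from the improved linear estimate of Proposition \ref{prop:lineares-cpt-largetime-I}. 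The convergence of $\sum_j(\mu_0 jN^{-2})^{-\mu(a)}$ with $\mu(a)>1$ is what makes the sum $N$-independent. Your proposal nods toward non-admissible decaying norms of $v$, but (a) $L^1_t$-type norms of $v$ are not provided by Corollary \ref{cor:smooth-t+1} since it requires $q\ge2$, so you still need the dispersive decay mechanism of Lemma \ref{lem:v-2-1-p}, and (b) without the interval decomposition there is no bounded Strichartz norm of $w$ to Hölder against. Both ingredients are essential and missing.
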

\begin{proof}
For simplicity, we denote $I=[0,T)$ and  any $t\in I$, 
$$
F(v,w)=|u|^pu-\chi_{\le 1}(t)|v|^p v.
$$
Then from the equation \eqref{eqs:NLS-cubic-w}, we have
\begin{align*}
\partial_t\|w\|_{L^2_x}^2=2\mbox{Im}\int  F(v,w)  \bar w\,dx.
\end{align*}
We may assume $t\ge2$, otherwise, the estimate has been included in Lemma \ref{lem:w-local}. 
Then for any $t\in I, t\ge 2$, integrating in time from $2$ to $t$, we obtain that
\begin{align}
\|w(t)\|_{L^2_x}^2= \| w(2)\|_{L^2}^2+2\mbox{Im}\int_2^t\int_{\R^d}  F(v,w)  \bar w\,dx ds. \label{eq:0.32}
\end{align}

Now we consider
$$
2\mbox{Im}\int_2^t\int_{\R^d}  F(v,w)  \bar w\,dx ds.
$$
To do this, we write
$$
F(v,w)=|u|^pw+|u|^pv-\chi_{\le 1}(t)|v|^p v.
$$
Note that
$$
2\mbox{Im}\int_2^t\int_{\R^d} |u|^pw \bar w\,dx ds=0.
$$ 
Hence, due to the time support, we have that
\begin{align*}
2\mbox{Im}\int_2^t&\int_{\R^d}  F(v,w)  \bar w\,dx ds=2\mbox{Im}\int_2^t\int_{\R^d}  |u|^p v\>\bar w\,dx ds.
\end{align*}
Since  $|u|^p\lesssim |w|^p+|v|^p$, this yields that 
\begin{align}
\Big|2\mbox{Im}\int_2^t\int_{\R^d}  F(v,w)  \bar w\,dx ds\Big|
\lesssim  \int_2^T\int_{\R^d}  |w|^{p+1}|v|\,dx ds
+ \int_2^T\int_{\R^d}  |w||v|^{p+1}\,dx ds.\label{3.00}
\end{align}
For the first term in \eqref{3.00}, we have that 
\begin{align*}
 \int_2^T\int_{\R^d}  |w|^{p+1}|v|\,dx ds
 \lesssim  \|w\|_{L^\infty_t L^2_x([2,T)\times \R^d)}^{p+1}\|v\|_{L^1_t L^{\frac2{1-p}}_x([2,T)\times \R^d)}.
\end{align*}
Thus by Lemmas \ref{lem:est-v-long}, we further get 
\begin{align}\label{est:y1-long}
 \int_2^T\int_{\R^d}  |w|^{p+1}|v|\,dx ds
 \lesssim \|w\|_{L^\infty_tL^2_x([2,T)\times\R^d)}^{p+1}.
\end{align}
For the second term in \eqref{3.00}, we have
\begin{align*}
\int_2^T\int_{\R^d}  |w||v|^{p+1}\,dx ds
\lesssim & \int_2^T\|v(s)\|_{L^{2(p+1)}_x(\R^d)}^{p+1} \|w(s)\|_{L^2_x(\R^d)}\,ds.
\end{align*}
Note that $2(p+1)$ satisfies \eqref{cond-r} and $(d-1)p>2$ when $|s_*|$ suitably small,  we further get 
\begin{align}
\int_2^T\int_{\R^d}  |w||v|^{p+1}\,dx ds
\lesssim & \int_2^T s^{\frac{-(d-1)p}{2}} \|w(s)\|_{L^2_x(\R^d)}\,ds\notag\\
\lesssim & \|w\|_{L^\infty_tL^2_x([0,T)\times\R^d)}. \label{est:y2-long}
\end{align}
Inserting \eqref{est:y1-long} and \eqref{est:y2-long} into \eqref{3.00}, and then \eqref{eq:0.32}, we get that for any $t\in [2,T)$, 
\begin{align*}
\|w(t)\|_{L^2_x}^2\lesssim \| w(2)\|_{L^2}^2+\|w\|_{L^\infty_tL^2_x([0,T)\times\R^d)}+\|w\|_{L^\infty_tL^2_x([0,T)\times\R^d)}^{p+1}.
\end{align*}
This together with the result in Lemma \ref{lem:w-local} implies that 
\begin{align*}
\|w\|_{L^\infty_tL^2_x([0,T)\times\R^d)}^2\lesssim1+ \| w_0\|_{L^2}^2+\|w\|_{L^\infty_tL^2_x([0,T)\times\R^d)}+\|w\|_{L^\infty_tL^2_x([0,T)\times\R^d)}^{p+1}. 
\end{align*}
Since $p<1$, by Cauchy-Schwartz's inequality, we obtain that 
\begin{align*}
\|w\|_{L^\infty_tL^2_x([0,T)\times\R^d)}^2\lesssim 1+ \| w_0\|_{L^2}^2. 
\end{align*}
This combining with \eqref{eq:21.43} finishes the proof of the proposition.
\end{proof}

\subsection{Global existence}
Now we prove $I=\R$. We only consider the positive time, the negative time being obtained in the same way.  By the global result of $v$ obtained in Lemma \ref{lem:smalldata}, we only need to consider the global existence of $w$. 
It follows from the standard bootstrap argument and we only give its sketch. Fixing any $2\le t_0\in I$ and $0<\delta<1$, and treating similarly as in the proof of Lemma \ref{lem:w-local}, we have that 
\begin{align*}
\big\|w\big\|_{L^2_tL^{\frac{2d}{d-2}}_x([t_0,t_0+\delta]\times\R^d)}&+\big\|w\big\|_{L^\infty_tL^2_x([t_0,t_0+\delta]\times\R^d)}\\
\lesssim & \|w(t_0)\|_{L^2(\R^d)}+\Big(\|w\|_{L^2_tL^{\frac{dp}{2-p}}_x([t_0,t_0+\delta]\times\R^d)}^p
+\|v\|_{L^2_tL^{\frac{dp}{2-p}}_x([t_0,t_0+\delta]\times\R^d)}^p\Big)\notag\\
&\quad \cdot\Big(\|w\|_{L^2_tL^{\frac{2d}{d-2}}_x([t_0,t_0+\delta]\times\R^d)}+\|v\|_{L^2_tL^{\frac{2d}{d-2}}_x([t_0,t_0+\delta]\times\R^d)}\Big).
\end{align*}
By the interpolation and H\"older inequality in time, we have that 
\begin{align}\label{interp-sc}
\|w\|_{L^2_tL^{\frac{dp}{2-p}}_x([t_0,t_0+\delta]\times\R^d)}\lesssim \delta^{-\frac{s_c}2} \big\|w\big\|_{L^2_tL^{\frac{2d}{d-2}}_x([t_0,t_0+\delta]\times\R^d)}^{1+s_c}
\big\|w\big\|_{L^\infty_tL^2_x([t_0,t_0+\delta]\times\R^d)}^{-s_c}.
\end{align}
Moreover, by Proposition \ref{lem:est-v-long} and H\"older inequality in time, we have that 
$$
\big\|v\big\|_{L^2_tL^{\frac{dp}{2-p}}_x([t_0,t_0+\delta]\times\R^d)}
+
\|v\|_{L^2_tL^{\frac{2d}{d-2}}_x([t_0,t_0+\delta]\times\R^d)}
\lesssim \delta^\frac12.
$$
Therefore, by Cauchy-Schwartz's inequality,  we obtain that 
\begin{align*}
\big\|w\big\|_{L^2_tL^{\frac{2d}{d-2}}_x([t_0,t_0+\delta]\times\R^d)}&+\big\|w\big\|_{L^\infty_tL^2_x([t_0,t_0+\delta]\times\R^d)}\\
\lesssim &1+ \|w(t_0)\|_{L^2(\R^d)}+\delta^{-\frac{s_c}2} \Big(\big\|w\big\|_{L^\infty_tL^2_x([t_0,t_0+\delta]\times\R^d)}\\
&\quad +\|w\|_{L^2_tL^{\frac{2d}{d-2}}_x([t_0,t_0+\delta]\times\R^d)}+\|w\|_{L^2_tL^{\frac{2d}{d-2}}_x([t_0,t_0+\delta]\times\R^d)}^{p+1}\Big). 
\end{align*}
Choosing $\delta=\delta(\|w(t_0)\|_{L^2(\R^d)})>0$ suitably small and the bootstrap, we obtain
\begin{align}\label{est:w-L2-t0}
\big\|w\big\|_{L^2_tL^{\frac{2d}{d-2}}_x([t_0,t_0+\delta]\times\R^d)}+\big\|w\big\|_{L^\infty_tL^2_x([t_0,t_0+\delta]\times\R^d)}\lesssim 1+\|w(t_0)\|_{L^2(\R^d)}.
\end{align}
From Proposition \ref{prop:L2-w}, $\|w(t_0)\|_{L^2(\R^d)}$ is only dependent on $N$,  not dependent on $t_0$. Hence $\delta=\delta(N)$. This extends the lifespan to $\R$ and thus proves the global well-posedness.

Lastly, we prove that $u(t)\in \dot H^{s_c}(\R^d)$ for any $t\in \R$. To this end, we first claim that for any $t_0\in \R$, 
\begin{align}\label{est:L2Lr-u}
\|u\|_{L^2_tL^{\frac{dp}{2-p}}_x([t_0,t_0+\delta]\times\R^d)}\lesssim_{N} 1.
\end{align}
Since $u=v+w$, it reduces to show 
$$
\big\|v\big\|_{L^2_tL^{\frac{dp}{2-p}}_x([t_0,t_0+\delta]\times\R^d)}\lesssim_{N} 1;\quad 
\big\|w\big\|_{L^2_tL^{\frac{dp}{2-p}}_x([t_0,t_0+\delta]\times\R^d)}\lesssim_{N} 1.
$$
The first one is followed directly by Lemma \ref{lem:est-v-long}. The second one is followed 
from \eqref{interp-sc}, \eqref{est:w-L2-t0} and the $L^2$ uniform boundedness from Proposition \ref{prop:L2-w}. This gives \eqref{est:L2Lr-u}.

Fixing $t_0\in\R$, suppose that $w(t_0)\in \dot H^{s_c}(\R^d)$, then arguing similarly as the proof of Lemma \ref{lem:local-result-u}, we obtain that for some constants $C_1, C_2>0$, 
\begin{align*}
\big\| u(t)\big\|_{L^\infty_t\dot H^{s_c}_x([t_0,t_0+\delta]\times\R^d)}
\le & \|u(t_0)\|_{\dot H^{s_c}(\R^d)}+C_1\big\| u\big\|_{L^2_tL^{\frac{dp}{2-p}}_x([t_0,t_0+\delta]\times\R^d)}^{p+1}\\
\le & \|u(t_0)\|_{\dot H^{s_c}(\R^d)}+C_2.
\end{align*}
Since $u_0\in \dot H^{s_c}(\R^d)$ and $\delta\sim_N 1$,   by iteration, we have that for any $t>0$, 
$$
\big\| u(t)\big\|_{\dot H^{s_c}(\R^d)}
\lesssim_{N} 1+t.
$$
The negative direction can be treated similarly.  This finishes the proof of Theorem \ref{thm:main01}.

\section*{Acknowledgements}

Part of this work was done while M. Beceanu, A. Soffer and Y. Wu were visiting CCNU (C.C.Normal University) Wuhan, China. The authors thank the institutions for their hospitality and the support. M.B. is partially supported by NSF grant DMS 1700293.  Q.D. is supported by NSFC 11971191 and partially by NSFC 11771165.  A.S is partially supported by NSF grant DMS 01600749 and NSFC 11671163. Y.W. is partially supported by NSFC 11771325 and 11571118. A.S. and Y.W. would like to thank  Ch. Miao for useful discussions.

\end{document}